\numberwithin{equation}{section}
\newtheorem{theorem}{Theorem}[section]
\newtheorem{proposition}{Proposition}[section]
\newtheorem{corollary}{Corollary}[section]
\newtheorem{definition}{Definition}[section]
\theoremstyle{remark}
\newtheorem{example}{Example}[section]
\newtheorem{remark}{Remark}[section]
\newtheorem{problem}{Problem}
\newcommand{\teich}{\mathcal{T}}
\newcommand{\qc}[1]{\operatorname{qc}\!\left[#1\right]}
\newcommand{\normqc}[1]{\tilde{\operatorname{qc}}_0\!\left[#1\right]}
\newcommand{\teichmap}[1]{\operatorname{tm}\!\left[#1\right]}
\newcommand{\Bers}[1]{\mathcal{T}^B_{#1}}
\newcommand{\mcg}{\operatorname{MCG}}
\newcommand{\modu}{\operatorname{Mod}}
\newcommand{\slm}{\operatorname{SL}}
\newcommand{\psl}{\operatorname{PSL}}
\newcommand{\ml}{\mathcal{ML}}
\newcommand{\pml}{\mathcal{PML}}
\newcommand{\ext}{\operatorname{Ext}}
\newcommand{\ThursM}{\mu_{Th}}
\newcommand{\PThursM}{\hat{\mu}_{Th}}
\newcommand{\pushThursMBers}{{\boldsymbol \mu}^B}
\newcommand{\TRay}[1]{{\boldsymbol r}_{#1}}
\newcommand{\imaginaryunit}{{\sqrt{-1}}}
\title[Function theory, Dynamics and Ergodic theory]{Function theory, Dynamics and Ergodic theory via Thurston theory}
\author{Hideki Miyachi}
\date{}
\address{School of Mathematics and Physics,
College of Science and Engineering,
Kanazawa University,
Kakuma-machi, Kanazawa,
Ishikawa, 920-1192, Japan
}
\email{miyachi@se.kanazawa-u.ac.jp}
\thanks{This work is partially supported by JSPS KAKENHI Grant Numbers
25K00909,
23K22396 ,
20K20519}
\subjclass[2020]{32G05, 32G15, 32U35, 57M50, 57R30, 58F11, 22E40}
\keywords{Teichm\"uller space, Teichm\"uller distance, Pluriharmonic function, Pluriharmonic measure, Poisson integral formula, Ergodic theory, Mapping class group}
\begin{document}
\begin{abstract}
In this chapter, we discuss function theory on Teichm\"uller space through Thurston's theory, as well as the dynamics of subgroups of the mapping class group of a surface, with reference to Sullivan's theory on the ergodic actions of discrete subgroups of the isometry group of hyperbolic space at infinity.
\end{abstract}

\maketitle

\tableofcontents

\section{Introduction}
The aim of this chapter is to explore function theory on Teichm\"uller space through Thurston's theory, as well as the dynamics of subgroups of the mapping class group of a surface, with reference to Sullivan's theory on the ergodic actions of discrete subgroups of the isometry group of hyperbolic space at infinity.  
A corresponding dictionary for the discussions in this chapter is provided in Table~\ref{table:a_dictionary}.
\begin{table}[t]
\centering
\begin{tabular}{|c|c|}\hline
Discrete subgroups of ${\rm Isom}_+(\mathbb{H}^n)$  & Subgroups of the mapping class group\\
\hline\hline
${\rm Isom}_+(\mathbb{H}^n)$ & Mapping class group \\
\hline
Hyperbolic space $\mathbb{H}^n$ & Teichm\"uller space $\teich_{g,m}$\\
\hline
Sphere at infinity $\mathbb{S}^{n-1}$ & Thurston boundary $\mathcal{PML}_{g,m}$\\
\hline
Hyperbolic distance  & Teichm\"uller distance \\
\hline
Poisson kernel & Ratio of extremal lengths \\
\hline
Harmonic measure & Thurston's measure \\
\hline
\multicolumn{2}{|c|}{Radon--Nikodym derivative = Poisson kernel} \\
\hline
\multicolumn{2}{|c|}{Busemann function = logarithm of the Poisson kernel}\\
\multicolumn{2}{|c|}{(up to multiplication)} \\
\hline
\multicolumn{2}{|c|}{The action on the big horospherical limit set is conservative} \\
\hline
\multicolumn{2}{|c|}{Ergodicity implies absence of (pluri)harmonic functions} \\
\hline
Absence of harmonic functions & \multirow{2}{*}{?} \\
implies ergodicity & \\
\hline
\end{tabular}
\caption{A dictionary}
\label{table:a_dictionary}
\end{table}

\subsection{Function theory to Group action}
In a series of papers \cite{MR4028456}, \cite{MR4633651}, \cite{MR4830064}, and \cite{bounded-PIF-miyachi2025}, the author of the present chapter investigated function theory on the Teichm\"uller space $\teich_{g,m}$ and obtained Poisson integral formulas for bounded pluriharmonic functions on $\teich_{g,m}$.

The Poisson integral formula associates pluriharmonic functions on Teichm\"uller space with measurable functions on the ideal boundary: by taking radial limits, these pluriharmonic functions are represented as bounded measurable functions on the space of projective measured geodesic laminations, which form the ideal boundary of the Thurston compactification. The Poisson integral then recovers the pluriharmonic functions from their boundary values (cf.~\S\ref{subsec:function_theory_Tgm}).  
We will return to this correspondence later when discussing the dynamical properties of subgroups of the mapping class group $\mcg(\Sigma_{g,m})$ acting on the orientable closed surface $\Sigma_{g,m}$ of genus $g$ with $m$ punctures.

This idea goes back to a 1935 work by W.~Seidel~\cite{Seidel_metric}. In fact, Seidel characterized those Fuchsian groups $G$ acting on $\mathbb{D}$ for which the action on the ideal boundary $\partial \mathbb{D}$ is ergodic, using a function-theoretic property of the quotient Riemann surface $\mathbb{D}/G$. Underlying his characterization, the Poisson integral formula due to Fatou plays a crucial role (cf.\ \Cref{thm:seidel}).

\subsection{Group actions}
The action of discrete subgroups of the isometry group of hyperbolic space has been extensively studied. Such an action decomposes the ideal boundary into two parts: the limit set and the region of discontinuity. On the limit set, the group action is minimal and thus intricate, while the action is properly discontinuous on the region of discontinuity. Discrete subgroups are classified into two types based on the cardinality of their limit sets. A subgroup is called \emph{elementary} if its limit set is finite, and \emph{non-elementary} otherwise.

In a \emph{foundational paper}~\cite{MR982564}, McCarthy and Papadopoulos investigated the actions of subgroups of the mapping class group on the space of projective measured laminations, which forms the ideal boundary of Teichm\"uller space in the Thurston compactification. They classified these subgroups into six types based on their dynamical properties. Compared to the theory of discrete groups, four of the six types correspond to elementary groups. One of the remaining two types can potentially be reduced to cases of lower topological complexity. A subgroup of the remaining type is called \emph{sufficiently large}, and it can be regarded as analogous to a non-elementary group in the theory of discrete isometry groups.
Prior to the work of McCarthy and Papadopoulos, Masur~\cite{MR837978} studied the action of handlebody groups on the space of projective measured laminations (cf. \S\ref{subsec:McP_class}).

A sufficiently large group admits both a limit set and a region of discontinuity. McCarthy and Papadopoulos studied the structures of such invariant subsets in depth. Farb and Mosher~\cite{MR1914566} analyzed the actions of so-called convex cocompact subgroups, while Kent and Leininger~\cite{MR2465691} established structural results concerning the limit sets of convex cocompact subgroups (cf. \S\ref{subsec:convex_cocompact}).

\subsection{As measurable actions}
A group action on a measurable space decomposes the space into two measurable subsets: the conservative part and the dissipative part. On the conservative part, the action is intricate in the sense that the orbit of any subset of positive measure returns to itself infinitely often. The dissipative part contains a wandering set whose orbit never returns.

Sullivan~\cite{MR624833} identified the conservative and dissipative parts for the action of discrete subgroups of the isometry group of hyperbolic space. Specifically, he showed that the conservative part coincides with the horospherical limit set (up to null sets), while the dissipative part corresponds to the boundary of Dirichlet domains (again up to null sets).

\subsection{Purpose of this chapter}
In this chapter, we develop a measure-theoretic framework for the actions of subgroups of the mapping class group.

Motivated by the aforementioned result of Seidel, we first show that if a subgroup $H$ acts ergodically on the space of projective measured geodesic laminations, then the quotient $\teich_{g,m}/H$ admits no non-constant bounded pluriharmonic functions (cf.\ \Cref{thm:main4}).

We introduce the notions of the \emph{small} and \emph{big horospherical limit sets} for such actions, and show that the action on the big horospherical limit set is conservative (cf.\ \Cref{prop:Wandering_Dissipative}). In the case of convex cocompact subgroups, the big horospherical limit set coincides with the limit set, implying that every big horospherical limit point is conical (cf.\ \Cref{prop:convex_cocompact}).

In this chapter,  
we do not discuss conformal densities of arbitrary dimensions, critical exponents of the Poincar\'e series, or the Hausdorff dimensions of limit sets for subgroups of the mapping class group. See \cite{MR1041575}, \cite{MR2235713} and \cite{MR2057305}, for instance.

\subsection{Selected problems}
We now state several problems closely related to the subject of this chapter. Some of the problems below are simplified; their precise statements appear in the indicated sections.
\begin{itemize}
\item
{[\S\ref{subsub:Her}, \Cref{problem:Her}]}
Does the Herglotz-type formula hold for positive pluriharmonic functions on Teichm\"uller space?
\item
{[\S\ref{subsub:Her}, \Cref{problem:bdd}]}
Characterize the boundary functions of bounded pluriharmonic or holomorphic functions on $\teich_{g,m}$.
\item
{[\S\ref{subsub:Her}, \Cref{problem:bdd2}]}
Characterize Borel measures whose Poisson integrals are bounded pluriharmonic functions on $\teich_{g,m}$.
\item
{[\S\ref{subsub:Her}, \Cref{problem:hardy}]}
Study the Hardy spaces on $\teich_{g,m}$.
\item
{[\S\ref{subsub:Her}, \Cref{problem:CR}]}
Study the analytic and geometric properties of bounded measurable function which satisfies the tangential CR-equation given in \eqref{eq:CR}.
\item
{[\S\ref{subsubsec:analytic_measures}, \Cref{problem:Fourier}]}
Is there an analogue of Fourier coefficients for Borel (regular) measures on $\pml_{g,m}$ that characterizes whether a given measure is an analytic measure?
\item
{[\S\ref{subsubsec:analytic_measures}, \Cref{problem:abs_conti}]}
Is every analytic measure absolutely continuous with respect to the Thurston measure?
\item
{[\S\ref{subsubsec:ergodic_decom}, \Cref{problem:erg}]}
Study the ergodic decomposition for the action of the Torelli group on $\pml_{g,m}$.
\item
{[\S\ref{subsec:Horospherical_limit_points}, \Cref{problem:10}]}
Complete the Sullivan dichotomy concerning the conservativity of the action on the horospherical limit set $\Lambda_H(H)$ for sufficiently large subgroups $H$ of $\mcg(\Sigma_{g,m})$.
\item
{[\S\ref{subsec:convex_cocompact}, \Cref{problem:7}]}
Is $H$ convex cocompact if $\Lambda(H)$ coincides with the small horospherical limit set $\Lambda_{h}(H)$?
\item
{[\S\ref{subsec:convex_cocompact}, \Cref{problem:12}]}
Is $H$ convex cocompact if all points in the big horospherical limit set $\Lambda_H(H)$ are uniquely ergodic?
\item
{[\S\ref{subsec:convex_cocompact}, \Cref{problem:15}]}
Is a convex cocompact group totally dissipative?
\item
{[\S\ref{subsec:erg_flow}, \Cref{problem:9}]}
Clarify the relationships among the following properties:  
(1) ergodicity of the horocyclic flow on $\teich_{g,m}/H$;  
(2) ergodicity of the action of $H$ on $\pml_{g,m}$;  
(3) the absence of non-constant bounded pluriharmonic functions on $\teich_{g,m}/H$.
\item
{[\S\ref{subsec:erg_flow}, \Cref{problem:11}]}
Complete the Hopf--Tsuji--Sullivan theory for subgroups of $\mcg(\Sigma_{g,m})$.
\item
{[\S\ref{subsec:erg_flow}, \Cref{problem:13}]}
Does there exist a subgroup $H$ of $\mcg(\Sigma_{g,m})$ that acts ergodically on $\pml_{g,m}$ but not on the product $\pml_{g,m} \times \pml_{g,m}$?
\item
{[\S\ref{subsec:Monodromy}, \Cref{problem:4-1}]}
For a holomorphic family $(\mathcal{M}, \pi, B)$ of Riemann surfaces of analytically finite type, does $\teich_{g,m}/H$ admit no non-constant bounded pluriharmonic functions when the base Riemann surface $B$ belongs to the class $\mathcal{O}_{HB}$?

\item
{[\S\ref{subsec:Monodromy}, \Cref{problem:4-2}]}
For a holomorphic family $(\mathcal{M}, \pi, B)$ of Riemann surfaces of analytically finite type, does $\teich_{g,m}/H$ admit no non-constant bounded holomorphic functions when the base Riemann surface $B$ belongs to the class $\mathcal{O}_{AB}$?

\item
{[\S\ref{subsec:Monodromy}, \Cref{problem:4}]}  
For a holomorphic family $(\mathcal{M}, \pi, B)$ of Riemann surfaces of analytically finite type, is the action of the monodromy group $\rho(\Gamma)$ ergodic when the base Riemann surface $B$ belongs to the class $\mathcal{O}_{HB}$?

\item
{[\S\ref{subsec:Monodromy}, \Cref{problem:5}]}  
For a holomorphic family $(\mathcal{M}, \pi, B)$ of Riemann surfaces of analytically finite type, is the action of $\rho(\Gamma)$ on $\pml$ minimal when the base Riemann surface $B$ belongs to the class $\mathcal{O}_{HB}$?

\item
{[\S\ref{subsec:Monodromy}, \Cref{problem:6}]}  
For a holomorphic family $(\mathcal{M}, \pi, B)$ of Riemann surfaces of type $(g,m)$, which properties of $B$ as a Riemann surface are inherited by $\teich_{g,m}/H$?  
For example, if $B$ belongs to the class $\mathcal{O}_G$, does $\teich_{g,m}/H$ also admit no pluricomplex Green function?
\end{itemize}

\subsection{About the chapter}
In \S\ref{sec:Bachgroud_Teich}, \S\ref{sec:Back_Thurston_theory}, and \S\ref{sec:Back_Geom_T-space}, we recall basic properties of the Teichm\"uller space and the space of measured geodesic laminations. In \S\ref{sec:Toy}, we discuss the Teichm\"uller space of tori and give precise calculations for notions introduced in the previous sections.
In \S\ref{sec:Function_theory_T}, we review the author's research on function theory on Teichm\"uller space. The Poisson integral formula for bounded pluriharmonic functions is stated. The proof of the formula will be given in a forthcoming paper \cite{bounded-PIF-miyachi2025}.
In \S\ref{sec:DynPML}, we recall the McCarthy and Papadopoulos classification of subgroups of the mapping class group.
In the final section, \S\ref{sec:function_theory_dymamics}, we discuss the action of subgroups of the mapping class group from the viewpoint of measurable actions.

\subsection*{Acknowledgements}
The author would like to express sincere gratitude to Professor Ken'ichi Ohshika and Professor Athanase Papadopoulos for their warm encouragement and continuous support throughout his research.

\section{Background from Teichm\"uller theory}
\label{sec:Bachgroud_Teich}
Let $\Sigma_{g,m}$ denote a connected, orientable smooth surface of genus $g$ with $m$ punctures. When $m = 0$, we write $\Sigma_g$ instead of $\Sigma_{g,0}$ for notational simplicity. Throughout this paper, we primarily assume that $2g - 2 + m > 0$, which guarantees that $\Sigma_{g,m}$ admits a complete hyperbolic structure. However, we occasionally consider the torus case, i.e., $(g, m) = (1, 0)$, when discussing examples.

For further details, see \cite{MR590044}, \cite{MR2241787}, \cite{MR2245223}, \cite{MR1215481}, and \cite{MR927291}.

\subsection{Quasiconformal maps}
Let $M$ be a Riemann surface.
An orientation-preserving homeomorphism $h$ from $M$ to another Riemann surface $M'$ is said to be \emph{$K$-quasiconformal}\index{quasiconformal map} for $K \ge 1$ if it is absolutely continuous on lines in each coordinate chart $(U, z)$ and satisfies
\begin{equation}
\label{eq:maximal_dilatation}
|\overline{\partial} h| \le \frac{K - 1}{K + 1} |\partial h|
\end{equation}
almost everywhere on $M$.
The infimum $K(h)$ over all $K \ge 1$ satisfying \eqref{eq:maximal_dilatation} is called the \emph{maximal dilatation}\index{maximal dilatation}\index{quasiconformal map!maximal dilatation} of $h$.

Let $L^\infty(M)$ be the complex Banach space of bounded measurable $(-1,1)$-forms $\mu=\mu(z)dz^{-1}d\overline{z}$ with the $L^\infty$-norm
$$
\|\mu\|_\infty=\mathop{{\rm ess.sup}}\{|\mu(p)|\mid p\in M\}.
$$
Let $BL^\infty(M)$ denote the open unit ball in $L^\infty(M)$.
By the measurable Riemann mapping theorem, for each $\mu \in BL^\infty(M)$, there exists a unique quasiconformal map $\qc{\mu}$ on $M$, up to post-composition with holomorphic automorphisms, which satisfies the Beltrami equation
\begin{equation}
\label{eq:measurable_Riemann_mapping_theorem}
\overline{\partial}(\qc{\mu}) = \mu\, \partial (\qc{\mu})
\end{equation}
(cf. \cite{MR0115006}).

Suppose $M$ is the Riemann sphere $\hat{\mathbb{C}}$, and fix three points in $\hat{\mathbb{C}}$. The \emph{normalized solution} $\normqc{\mu}$ for $\mu \in BL^\infty(\hat{\mathbb{C}})$ is the solution of the Beltrami equation \eqref{eq:measurable_Riemann_mapping_theorem} that fixes the chosen three points. The measurable Riemann mapping theorem also asserts that for any $z \in \hat{\mathbb{C}}$, the evaluation map
\begin{equation}
\label{eq:evaluation}
BL^\infty(\hat{\mathbb{C}}) \ni \mu \mapsto \normqc{\mu}(z)
\end{equation}
is holomorphic on the complex Banach manifold $BL^\infty(\hat{\mathbb{C}})$.

\subsection{Teichm\"uller space}
A \emph{marked Riemann surface of analytically finite type $(g,m)$}\index{marked Riemann surface} (or simply of type $(g,m)$) is a pair $(M,f)$, where $M$ is a Riemann surface of genus $g$ with $m$ punctures, and $f\colon \Sigma_{g,m}\to M$ is an orientation-preserving homeomorphism.
Two marked Riemann surfaces $(M_1,f_1)$ and $(M_2,f_2)$ of type $(g,m)$ are said to be \emph{Teichm\"uller equivalent}\index{Teichm\"uller equivalence}\index{Teichm\"uller space!Teichm\"uller equivalence} if there exists a biholomorphism $h\colon M_1\to M_2$ such that $h\circ f_1$ is homotopic to $f_2$.
The \emph{Teichm\"uller space}\index{Teichm\"uller space} $\teich_{g,m}$ is the space of Teichm\"uller equivalence classes of marked Riemann surfaces of analytically finite type $(g, m)$.
When $m = 0$, we write $\teich_g$ instead of $\teich_{g,0}$ for simplicity.

The Teichm\"uller space $\teich_{g,m}$ admits a natural complete metric, called the \emph{Teichm\"uller distance}\index{Teichm\"uller distance}\index{Teichm\"uller space!Teichm\"uller distance}, defined by
$$
d_T(x_1,x_2)=\dfrac{1}{2}\log\inf_h K(h)
$$
for $x_1=(M_1,f_1)$ and $x_2=(M_2,f_2)\in \teich_{g,m}$, where $h$ ranges over all quasiconformal maps $h\colon M_1\to M_2$ such that $h\circ f_1$ is homotopic to $f_2$.

\subsection{Teichm\"uller theorem}
\label{subsec:teichmuller_theorem}
Let $x = (M, f) \in \teich_{g,m}$. Let $\mathcal{Q}_x$ denote the complex Banach space of holomorphic quadratic differentials $q = q(z)dz^2$ on $M$, equipped with the $L^1$-norm
$$
\|q\|=\int_M|q(z)|\dfrac{\imaginaryunit}{2}dz\wedge d\overline{z}
$$
where $\imaginaryunit$ is the imaginary unit such that the orientation determined by the ordered base $(1,\imaginaryunit)$ of the underlying real vector space $\mathbb{C}$ coincides with a canonical orientation of the complex plane $\mathbb{C}$.

Any $q \in \mathcal{Q}_x$ extends to a meromorphic quadratic differential on the puncture-filled surface $\overline{M}$ associated with $M$, possibly having poles of order at most one at the punctures.
Let $\mathcal{BQ}_x$ denote the open unit ball in $\mathcal{Q}_x$.
By the Riemann--Roch theorem, the space $\mathcal{Q}_x$ is isomorphic to $\mathbb{C}^{3g - 3 + m}$.

Let $x_0 = (M_0, f_0)\in \teich_{g,m}$. For $q \in \mathcal{BQ}_{x_0}$,
We define a quasiconformal mapping $\teichmap{q}$ on $M_0$ by
$$
\teichmap{q}=
\begin{cases}
\qc{\|q\| \dfrac{\overline{q}}{|q|}} &(q \ne 0) \\
\operatorname{id}_{M_0} & (q=0),
\end{cases}
$$
where $\operatorname{id}_{M_0}$ is the identity map on $M_0$.
Let $M[q]$ denote the Riemann surface of type $(g,m)$ that is the image of $\teichmap{q}$.
We call $\teichmap{q} \colon M_0 \to M[q]$ the \emph{Teichm\"uller map}\index{Teichm\"uller map} associated with the holomorphic quadratic differential $q$.

Teichm\"uller \cite{MR0003242} showed that the map
\begin{equation}
\label{eq:Teichmuller_homeomorphism}
\mathfrak{T}_{x_0}\colon \mathcal{BQ}_{x_0} \ni q \mapsto (M[q], \teichmap{q} \circ f_0) \in \teich_{g,m}
\end{equation}
is a homeomorphism satisfying
\begin{equation}
\label{eq:Teichmuller-geodesic}
d_T\left(\mathfrak{T}_{x_0}(kq),\mathfrak{T}_{x_0}(k'q)\right)=\mathop{{\rm Arctanh}}\left|\frac{k-k'}{1-k'k}\right|
= d_{\mathbb{D}}(k,k')
\end{equation}
for $0 \le k, k' < 1$ and $q \in \mathcal{Q}_{x_0}$ with $\|q\| = 1$,  
where $d_{\mathbb{D}}$ is the hyperbolic distance on $\mathbb{D}$ with curvature $-4$ (cf. \cite{MR3497300}).  
In particular, the Teichm\"uller space $\teich_{g,m}$ is homeomorphic to an open ball of dimension $6g - 6 + 2m$, and the quasiconformal map $\teichmap{q} \colon M_0 \to M[q]$ is \emph{extremal}\index{quasiconformal map!extremal} in the sense that every quasiconformal map $h \colon M_0 \to M[q]$ homotopic to $\teichmap{q}$ satisfies
\[
\|q\| \le \frac{K(h) - 1}{K(h) + 1}.
\]
It is also known that the Teichm\"uller map $\teichmap{q}$ for $q \in \mathcal{BQ}_{x_0}$ is \emph{uniquely extremal}\index{Teichm\"uller map!uniquely extremal} in the sense that equality
\[
\|q\| = \frac{K(h) - 1}{K(h) + 1}
\]
implies $h = \teichmap{q}$ on $M_0$.

Thus, for $q \in \mathcal{Q}_{x_0}-\{0\}$, the map
\begin{equation}
\label{eq:Teichmuller-ray}
\TRay{q}(t) = \left(M\left[\tanh(t) \frac{q}{\|q\|}\right], \teichmap{\tanh(t) \frac{q}{\|q\|}} \circ f_0\right) \quad (t \ge 0)
\end{equation}
is a geodesic ray with respect to the Teichm\"uller distance.
This ray $\TRay{q}$ is called the \emph{Teichm\"uller geodesic ray}\index{Teichm\"uller geodesic ray} emanating from $x$, defined by $q \in \mathcal{Q}_x$.  
The geodesic space $(\teich_{g,m}, d_T)$ is a uniquely geodesic space in the sense that for any $x_1, x_2 \in \teich_{g,m}$, there exists a unique Teichm\"uller geodesic connecting $x_1$ and $x_2$.

\subsection{Complex structure and Holomorphic infinitesmal spaces}
\label{subsec:complex_structure_holomorphicinfinitesimal_spaces}
Let $x_0 = (M_0, f_0) \in \teich_{g,m}$. The map
\begin{equation}
\label{eq:Bers_projection}
\mathfrak{BP}\colon BL^\infty(M_0) \ni \mu \mapsto \left(\qc{\mu}(M), \qc{\mu} \circ f_0\right) \in \teich_{g,m}
\end{equation}
is surjective.
The Teichm\"uller space $\teich_{g,m}$ admits a complex structure that makes the projection \eqref{eq:Bers_projection} a holomorphic split submersion.

There exists a natural pairing between $L^\infty(M_0)$ and $\mathcal{Q}_{x_0}$ defined by
\begin{equation}
\label{eq:pairing_pre}
\llangle \mu, q \rrangle = \iint_{M_0} \mu(z) q(z) \frac{\imaginaryunit}{2} dz \wedge d\overline{z}
\end{equation}
for $\mu \in L^\infty(M_0)$ and $q \in \mathcal{Q}_{x_0}$.
The Teichm\"uller theorem asserts that the kernel of the differential $\left.D(\mathfrak{BP})\right|_0$ of the submersion $\mathfrak{BP}$ at the origin $0 \in BL^\infty(M_0)$ is given by
$$
\ker(\left.D(\mathfrak{BP})\right|_0)=\{\mu\in L^\infty(M_0)\mid \llangle \mu,q\rrangle=0,\ \forall q\in \mathcal{Q}_{x_0}\}.
$$
Therefore, the holomorphic tangent space $T_{x_0}(\teich_{g,m})$ of $\teich_{g,m}$ at $x_0 \in \teich_{g,m}$ is represented by
$$
T_{x_0}(\teich_{g,m}) = L^\infty(M_0) \big/ \ker\left(\left.D(\mathfrak{BP})\right|_0\right),
$$
and the pairing \eqref{eq:pairing_pre} descends to a pairing between $T_{x_0}(\teich_{g,m})$ and $\mathcal{Q}_{x_0}$ as
\begin{equation}
\label{eq:pairing}
\langle v,q\rangle=\langle [\mu],q\rangle=\llangle \mu,q\rrangle
\end{equation}
for $v = [\mu] \in T_{x_0}(\teich_{g,m})$ and $q \in \mathcal{Q}_{x_0}$.
This pairing is non-degenerate and establishes a duality between $T_{x_0}(\teich_{g,m})$ and $\mathcal{Q}_{x_0}$.
The pairing \eqref{eq:pairing} provides a natural identification between $\mathcal{Q}_{x_0}$ and the holomorphic cotangent space of $\teich_{g,m}$ at $x_0$.

Royden \cite{MR0288254} showed that under the complex sturcture, the Teichm\"uller distance coincides with the Kobayashi distance. Hence, $(\teich_{g,m},d_T)$ is complete Kobayashi hyperbolic. Earle \cite{MR0352450.1} showed that the Carath\'eodory distance on $\teich_{g,m}$ is also complete. Hence it is holomorphically convex \cite[Theorem 3.6]{MR2194466}, which was first shown by Bers and Ehrenpreis \cite{MR0168800}.

\subsection{Complex tangent and cotangent spaces}
%
In this section, we present a model for the holomorphic and anti-holomorphic tangent and cotangent bundles over Teichm\"uller space. To formulate this model, we characterize the eigenspace corresponding to the eigenvalue $-\imaginaryunit$, given a complex vector space realized as the eigenspace of the eigenvalue $\imaginaryunit$ under its natural complex structure (\Cref{prop:complex_structure}). This yields the eigenspace decomposition necessary to distinguish holomorphic from anti-holomorphic directions from an abstract point of view.

\subsubsection{Complex tangent spaces of complex manifolds}
We begin with a brief review of the basics of the complex tangent space of a general complex manifold. For references, see \cite[Chapter IX]{MR1393941}, \cite[p.~71]{MR507725}, and \cite[\S2.2]{MR2093043}.

Let $M$ be a complex manifold of dimension $n$.  
Consider the real tangent space $T_p(M)^{\mathbb{R}}$ at a point $p$ on the underlying real analytic manifold of $M$.  
The complexification $T_p(M)^{\mathbb{C}} = T_p(M)^{\mathbb{R}} \otimes_{\mathbb{R}} \mathbb{C}$ of the real tangent space is called the \emph{complex tangent space}\index{complex tangent space} of $M$.  
We have the decomposition
\begin{equation}
\label{eq:decomposition_complex_tangent}
T_p(M)^{\mathbb{C}} = T^{1,0}_p(M) \oplus T^{0,1}_p(M)
\end{equation}
where $T^{1,0}_p(M)$ and $T^{0,1}_p(M)$ are the eigenspaces corresponding to the eigenvalues $\imaginaryunit$ and $-\imaginaryunit$, respectively, for the action of the complex structure on $T_p(M)^{\mathbb{C}}$.

The space $T^{1,0}_p(M)$ is called the \emph{holomorphic tangent space}\index{holomorphic tangent space}\index{complex tangent space!holomorphic tangent space} of $M$, and is usually denoted by $T_p(M)$ in the context of complex geometry.  
The action of complex conjugation on $T_p(M)^{\mathbb{C}}$ interchanges $T^{1,0}_p(M)$ and $T^{0,1}_p(M)$.  
The real tangent space $T_p(M)^{\mathbb{R}}$ is the invariant subspace of $T_p(M)^{\mathbb{C}}$ under complex conjugation.
%

Let $T^*_p(M)^{\mathbb{R}}$ denote the real cotangent space at $p \in M$ associated with the underlying real analytic structure of $M$, and let $T^*_p(M)^{\mathbb{C}} = T^*_p(M)^{\mathbb{R}} \otimes_{\mathbb{R}} \mathbb{C}$ be its complexification, which is called the \emph{complex cotangent space}\index{complex cotangent space}.  
A natural pairing between $T_p(M)^{\mathbb{R}}$ and $T^*_p(M)^{\mathbb{R}}$ extends naturally to their complexifications.

The decomposition \eqref{eq:decomposition_complex_tangent} induces the decomposition
\begin{equation}
\label{eq:decomposition_complex_cotangent}
T^*_p(M)^{\mathbb{C}} = \mathfrak{C}^{1,0}_p(M) \oplus \mathfrak{C}^{0,1}_p(M)
\end{equation}
such that $\mathfrak{C}^{1,0}_p(M)$ (resp. $\mathfrak{C}^{0,1}_p(M)$) is the dual space to $T^{1,0}_p(M)$ (resp. $T^{0,1}_p(M)$) under the duality pairing.
The $(1,0)$-part $\mathfrak{C}^{1,0}_p(M)$ is also called the \emph{holomorphic cotangent space}\index{holomorphic cotangent space}\index{complex cotangent space!holomorphic cotangent space} and is denoted by $T_p^*(M)$ in the context of complex geometry.
Under the decompositions \eqref{eq:decomposition_complex_tangent} and \eqref{eq:decomposition_complex_cotangent}, for $v_1 + v_2 \in T_p(M)^{\mathbb{C}}$ and $\omega_1 + \omega_2 \in T^*_p(M)^{\mathbb{C}}$, the natural pairing between $T_p(M)^{\mathbb{C}}$ and $T^*_p(M)^{\mathbb{C}}$ is given by
\begin{equation}
\label{eq:duality_complexification}
(\omega_1 + \omega_2)(v_1 + v_2) = \omega_1(v_1) + \omega_2(v_2).
\end{equation}

\subsubsection*{Calculations in a chart}
Let $z = (z_1, \dots, z_n)$ be a local coordinate chart of $M$ around $p$ as a complex manifold, and set $z_k = x_k + y_k\imaginaryunit$ for $k = 1, \dots, n$.  
Then, bases of $T^{1,0}_p(M)$ and $T^{0,1}_p(M)$ are given by
$$
\frac{\partial}{\partial z_k} = \frac{1}{2} \left( \frac{\partial}{\partial x_k} - \imaginaryunit \frac{\partial}{\partial y_k} \right), \quad
\frac{\partial}{\partial \overline{z}_k} = \frac{1}{2} \left( \frac{\partial}{\partial x_k} + \imaginaryunit\frac{\partial}{\partial y_k} \right)
$$
for $k = 1, \dots, n$.  
Using these bases, the decomposition \eqref{eq:decomposition_complex_tangent} of $T_p(M)^{\mathbb{C}}$ is represented by
\begin{equation}
\label{eq:decomposition_complex_tangent2}
\sum_{k=1}^n \left( a_k \frac{\partial}{\partial x_k} + b_k \frac{\partial}{\partial y_k} \right)
=
\sum_{k=1}^n (a_k + b_k\imaginaryunit) \frac{\partial}{\partial z_k}
+
\sum_{k=1}^n (a_k - b_k\imaginaryunit) \frac{\partial}{\partial \overline{z}_k}
\end{equation}
for $a_k, b_k \in \mathbb{C}$.
Therefore, the underlying real tangent space $T_p(M)^{\mathbb{R}}$ is realized in $T_p(M)^{\mathbb{C}}=T^{1,0}_p(M)\oplus T^{0,1}_p(M)$ by the map
\begin{equation}
\label{eq:real_T}
\xymatrix@R=1mm@C=10mm{
T_p(M)^{\mathbb{R}} \ar[r] &T_p(M)^{\mathbb{C}}=T^{1,0}_p(M)\oplus T^{0,1}_p(M) \\
\frac{\partial}{\partial x_k} \ar@{|->}[r] &
\left(\frac{\partial}{\partial z_k},\frac{\partial}{\partial \overline{z}_k}\right)
=
\left(\frac{\partial}{\partial z_k},\overline{\frac{\partial}{\partial z_k}}\right) \\
\frac{\partial}{\partial y_k} \ar@{|->}[r] &
\imaginaryunit\left(\frac{\partial}{\partial z_k},-\frac{\partial}{\partial \overline{z}_k}\right)
=
\left(\imaginaryunit\frac{\partial}{\partial z_k},\overline{\imaginaryunit\frac{\partial}{\partial z_k}}\right)
}
\end{equation}
that induces an isomorphism $T_p(M)^{\mathbb{R}} \otimes_{\mathbb{R}}\mathbb{C}\cong T_p(M)^{\mathbb{C}}$.

In the case of cotangent spaces, the real and complex cases are related by
\begin{align}
\label{eq:decomposition_complex_cotangent2}
\sum_{k=1}^n\left(a_kdx_k+b_kdy_k\right)
&=\sum_{k=1}^n\left(a_k\frac{1}{2}(dz_k+d\overline{z}_k)+b_k\frac{1}{2i}(dz_k-d\overline{z}_k)\right) \\
&=\sum_{k=1}^n\left(\frac{a_k-b_k\imaginaryunit}{2}dz_k+\frac{a_k+b_k\imaginaryunit}{2}d\overline{z}_k\right)
\nonumber
\end{align}
for $a_k$, $b_k\in\mathbb{C}$.
Therefore, the underlying real cotangent space $T^*_p(M)^{\mathbb{R}}$ is realized in $T^*_p(M)^{\mathbb{C}}=\mathfrak{C}^{1,0}_p(M) \oplus \mathfrak{C}^{0,1}_p(M)$ by the map
\begin{equation}
\label{eq:real_Tstar}
\xymatrix@R=1mm@C=10mm{
T^*_p(M)^{\mathbb{R}} \ar[r] &T^*_p(M)^{\mathbb{C}}=\mathfrak{C}^{1,0}_p(M) \oplus \mathfrak{C}^{0,1}_p(M) \\
dx_k \ar@{|->}[r] &
\frac{1}{2}\left(dz_k,d\overline{z}_k\right)
=
\left(\frac{1}{2}dz_k,\overline{\frac{1}{2}dz_k}\right) \\
d y_k \ar@{|->}[r] &
\frac{1}{2\imaginaryunit}\left(dz_k,-d\overline{z}_k\right)
=
\left(\frac{1}{2\imaginaryunit}dz_k,\overline{\frac{1}{2\imaginaryunit}dz_k}\right).
}
\end{equation}
that induces an isomorphism $T^*_p(M)^{\mathbb{R}} \otimes_{\mathbb{R}}\mathbb{C}\cong T^*_p(M)^{\mathbb{C}}$.

The two realizations \eqref{eq:real_T} and \eqref{eq:real_Tstar} are compatible with duality in the sense that the pairing on $T_p(M)^{\mathbb{R}} \times T^*_p(M)^{\mathbb{R}}$ coincides with the restriction of the pairing on $T_p(M)^{\mathbb{C}} \times T^*_p(M)^{\mathbb{C}}$.
For instance, for a $C^1$ function $f$ on $M$, the total derivative decomposes as $df = \partial f + \overline{\partial} f$, as in \eqref{eq:decomposition_complex_cotangent2}, and a vector $X \in T_p(M)^{\mathbb{R}}$ can be written as $X = Z + \overline{Z}$ for some $Z \in T^{1,0}_p(M)$, as in \eqref{eq:decomposition_complex_tangent2}.
In this case,
\begin{equation}
\label{eq:decomposition_complex_tangent3}
df(X) = \partial f(Z) + \overline{\partial} f(\overline{Z})
\end{equation}
as given in \eqref{eq:duality_complexification}.


\subsubsection{Complexification of the underlying real vector space}
Let $V$ be a complex vector space of dimension $n$, and let $V^*$ denote the $\mathbb{C}$-dual space of $V$. Let $V^{\mathbb{R}}$ and $(V^*)^{\mathbb{R}}$ be the underlying real vector spaces of $V$ and $V^*$. The complex structure $J$ on $V^{\mathbb{R}}$ is induced by the multiplication map $v \mapsto \imaginaryunit v$ on $V$.  The complex structure $J^*$ on the dual space $(V^*)^{\mathbb{R}}$ is defined by $J^*(\varphi)(v)=\varphi(J(v))$ for $\varphi \in V^*$.

Let $W$ be a $\mathbb{C}$-vector space of dimension $n$, and let $W^*$ denote its $\mathbb{C}$-dual.
Suppose there exist anti-$\mathbb{C}$-linear isomorphisms $A \colon V \to W$ and $A^* \colon V^* \to W^*$ such that
\begin{equation}
\label{eq:conjugations}
A^*(\varphi)(A(v)) = \overline{\varphi(v)} \quad \text{for all } v \in V \text{ and } \varphi \in V^*.
\end{equation}
Under this condition, the spaces $W$ and $W^*$ can be regarded as the anti-holomorphic parts of $V$ and $V^*$, respectively, as follows.

We identify the underlying real vector spaces $V^\mathbb{R}$ and $(V^*)^\mathbb{R}$ with subspaces of $V \oplus W$ and $V^* \oplus W^*$, respectively, via
\begin{align*}
V^\mathbb{R} &\ni v \mapsto (v, A(v)) \in V \oplus W, \\
(V^*)^\mathbb{R} &\ni \varphi \mapsto (\varphi, A^*(\varphi)) \in V^* \oplus W^*.
\end{align*}
Notice that the two realizations \eqref{eq:real_T} and \eqref{eq:real_Tstar}
are obtained in this manner when $A$ and $A^*$ are chosen to be complex conjugation.

The actions of the complex structures $J$ and $J^*$ on $V\oplus W$ and $V^*\oplus W^*$ are represented as
\[
\xymatrix@C=15mm@R=3mm{
V\oplus W \ni (v,w) \ar@{|->}[r]^{J} & (\imaginaryunit v,-\imaginaryunit w)\in V\oplus W \\
V^*\oplus W^* \ni (\varphi,\psi) \ar@{|->}[r]^{J^*} & (\imaginaryunit\varphi,-\imaginaryunit\psi)\in V^*\oplus W^*.
}
\]
Hence, the direct products $V\oplus W$ and $V^*\oplus W^*$ represents the decompositions of the eigenspaces of the complex structures $J$ and $J^*$ such that $V$ and $V^*$ are the eigenspaces of eigenvalue $\imaginaryunit$ and $W$ and $W^*$ are the eigenspaces of eigenvalue $-\imaginaryunit$.
The duality between $V\oplus W$ and $V^*\oplus W^*$ is given by the pairing
\[
(\varphi,\psi)(v,w)=\varphi(v)+\psi(w).
\]
If we restrict the pairing to the underlying real vector spaces, we have
\[
(\varphi,A^*(\varphi))(v,A(v))=\varphi(v)+A^*(\varphi)(A(v))=\varphi(v)+\overline{\varphi(v)}.
\]

We now discuss the naturality of the construction. Let $X$ be a $\mathbb{C}$-vector space of dimension $n$, and let $X^*$ denote its $\mathbb{C}$-dual.  
Suppose that there exist anti-$\mathbb{C}$-linear isomorphisms $B \colon V \to X$ and $B^* \colon V^* \to X^*$ such that
\[
B^*(\varphi)(B(v)) = \overline{\varphi(v)}
\]
for all $\varphi \in V^*$ and $v \in V$.
Then the $\mathbb{C}$-linear isomorphisms
\[
\begin{aligned}
V \oplus W &\ni (v, w) \mapsto (v, B \circ A^{-1}(w)) \in V \oplus X, \\
V^* \oplus W^* &\ni (\varphi, \psi) \mapsto (\varphi, B^* \circ (A^*)^{-1}(\psi)) \in V^* \oplus X^*
\end{aligned}
\]
preserve the underlying real vector spaces and the natural duality pairings, and respect the decomposition into eigenspaces associated with the complex structures acting on $V$ and $V^*$ (and hence on $V^{\mathbb{R}}$ and $(V^*)^{\mathbb{R}}$).  
For clarity, we summarize this as follows.

\begin{proposition}[Characterization of anti-holomorphic part]
\label{prop:complex_structure}
Let $V$ be a complex vector space of dimension $n$, and let $V^*$ denote its $\mathbb{C}$-dual. Let $J$ be the complex structure on the underlying real vector space $V^{\mathbb{R}}$, defined by the multiplication map $v \mapsto \imaginaryunit v$. Let $J^*$ denote the complex structure on the real vector space underlying $V^*$ that is dual to $J$.

Let $W$ be a complex vector space of dimension $n$, and let $W^*$ denote its $\mathbb{C}$-dual.
Suppose that there exist anti-complex linear isomorphisms $A\colon V \to W$ and $A^\colon V^* \to W^*$ such that $A^*(\varphi)(A(v))=\overline{\varphi(v)}$ for $\varphi\in V^*$ and $v\in V$.
We identify $V^{\mathbb{R}}$ and $(V^*)^{\mathbb{R}}$ with subspaces of $V \oplus W$ and $V^* \oplus W^*$, respectively, via the maps
\begin{equation}
\label{eq:real_v}
\begin{cases}
V^{\mathbb{R}} \ni v \mapsto (v, A(v)) \in V \oplus W, \\
(V^*)^{\mathbb{R}} \ni \varphi \mapsto (\varphi, A^*(\varphi)) \in V^* \oplus W^*.
\end{cases}
\end{equation}
Then, under the identification in \eqref{eq:real_v}, the spaces $V \oplus W$ and $V^* \oplus W^*$ are regarded as the complexifications of $V^{\mathbb{R}}$ and $(V^*)^{\mathbb{R}}$, respectively.  
In this identification, the decompositions $V \oplus W$ and $V^* \oplus W^*$ correspond to the eigenspace decompositions of the complex structures $J$ and $J^*$ with eigenvalues $\imaginaryunit$ and $-\imaginaryunit$.

Moreover, these decompositions are natural in the following sense:  
For any complex vector space $X$, its $\mathbb{C}$-dual $X^*$, and anti-$\mathbb{C}$-linear isomorphisms $B \colon V \to X$ and $B^* \colon V^* \to X^*$ satisfying
\[
B^*(\varphi)(B(v)) = \overline{\varphi(v)} \quad (\varphi \in V^*,\, v \in V),
\]
if we identify the underlying real vector spaces $V^{\mathbb{R}}$ and $(V^*)^{\mathbb{R}}$ via
\[
\begin{cases}
V^{\mathbb{R}} \ni v \mapsto (v, B(v)) \in V \oplus X, \\
(V^*)^{\mathbb{R}} \ni \varphi \mapsto (\varphi, B^*(\varphi)) \in V^* \oplus X^*,
\end{cases}
\]
then the isomorphisms
\[
\begin{aligned}
V \oplus W &\ni (v, w) \mapsto (v, B \circ A^{-1}(w)) \in V \oplus X, \\
V^* \oplus W^* &\ni (\varphi, \psi) \mapsto (\varphi, B^* \circ (A^*)^{-1}(\psi)) \in V^* \oplus X^*
\end{aligned}
\]
preserve the underlying real vector spaces and the natural pairings, and are compatible with the above eigenspace decompositions.

\end{proposition}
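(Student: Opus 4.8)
The plan is to reduce the entire statement to three elementary verifications: that the prescribed inclusions of $V^{\mathbb{R}}$ and $(V^*)^{\mathbb{R}}$ are injective $\mathbb{R}$-linear maps whose complex-linear extensions are isomorphisms onto $V\oplus W$ and $V^*\oplus W^*$; that the transported complex structures act as claimed, so that $W$ and $W^*$ are the $-\imaginaryunit$ eigenspaces; and that the comparison maps of the naturality clause intertwine all the relevant structure. Throughout, the single load-bearing algebraic fact is the anti-$\mathbb{C}$-linearity of $A$ and $A^*$, which is precisely what flips the sign of $\imaginaryunit$ on the second summand.

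First I would treat the identification with the complexification. Writing $\iota\colon V^{\mathbb{R}}\to V\oplus W$, $v\mapsto(v,A(v))$, note that $\iota$ is $\mathbb{R}$-linear because $A(rv)=\overline{r}A(v)=rA(v)$ for $r\in\mathbb{R}$, and it is visibly injective. I would extend $\iota$ complex-linearly to $\iota^{\mathbb{C}}\colon V^{\mathbb{R}}\otimes_{\mathbb{R}}\mathbb{C}\to V\oplus W$, the target carrying its standard structure, so that $\iota^{\mathbb{C}}(v\otimes 1+v'\otimes\imaginaryunit)=(v+\imaginaryunit v',\,A(v)+\imaginaryunit A(v'))$. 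Since $\dim_{\mathbb{C}}(V^{\mathbb{R}}\otimes_{\mathbb{R}}\mathbb{C})=2n=\dim_{\mathbb{C}}(V\oplus W)$, it suffices to check injectivity: if both coordinates vanish then $v=-\imaginaryunit v'$ in $V$, and applying the anti-linear $A$ gives $A(v)=\imaginaryunit A(v')$, so the second coordinate becomes $2\imaginaryunit A(v')=0$, whence $v'=0$ and $v=0$. The identical argument with $A^*$ handles $V^*\oplus W^*$.

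Next I would transport the complex structure. As $\iota^{\mathbb{C}}$ is the complexification of $\iota$ and $J$ is multiplication by $\imaginaryunit$ on $V$, the operator $J$ acts on the real form by $\iota(v)=(v,A(v))\mapsto\iota(\imaginaryunit v)=(\imaginaryunit v,\,A(\imaginaryunit v))=(\imaginaryunit v,\,-\imaginaryunit A(v))$, again by anti-linearity. Both this transported operator and the map $(v,w)\mapsto(\imaginaryunit v,-\imaginaryunit w)$ are complex-linear for the standard structure and agree on the spanning set $\iota(V^{\mathbb{R}})$, so they coincide on $V\oplus W$; its $+\imaginaryunit$ and $-\imaginaryunit$ eigenspaces are then exactly $V$ and $W$. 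For the dual I would first observe $J^*(\varphi)(v)=\varphi(\imaginaryunit v)=\imaginaryunit\varphi(v)$, so $J^*$ is multiplication by $\imaginaryunit$ on $V^*$, and the same computation with $A^*$ yields the decomposition of $V^*\oplus W^*$. The restriction of the pairing is then immediate from the defining identity, $(\varphi,A^*(\varphi))(v,A(v))=\varphi(v)+A^*(\varphi)(A(v))=\varphi(v)+\overline{\varphi(v)}$, which is real-valued as required.

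Finally, for naturality I would verify in turn the four properties of $\Phi\colon(v,w)\mapsto(v,B\circ A^{-1}(w))$ and $\Phi^*\colon(\varphi,\psi)\mapsto(\varphi,B^*\circ(A^*)^{-1}(\psi))$. Each is a composition of two anti-linear isomorphisms on the second factor, hence a $\mathbb{C}$-linear isomorphism manifestly preserving the eigenspace splitting; it preserves the real forms because $\Phi(v,A(v))=(v,B(v))$ and likewise for $\Phi^*$. Compatibility with the pairings reduces, after setting $\chi=(A^*)^{-1}\psi$ and $u=A^{-1}w$, to the remark that both $B^*(\chi)(B(u))$ and $\psi(w)=A^*(\chi)(A(u))$ equal $\overline{\chi(u)}$ by the two hypotheses. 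I do not anticipate a genuine obstacle; the only point demanding care is the consistent bookkeeping of the two distinct complex structures—the standard one on $V\oplus W$ versus the transported $J$—since conflating them is exactly what would obscure why anti-linearity produces the eigenvalue $-\imaginaryunit$.
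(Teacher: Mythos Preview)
Your proposal is correct and follows essentially the same route as the paper: the paper develops the proposition through the discussion in the subsection immediately preceding it (the proposition is explicitly introduced as a summary), computing $J(v,A(v))=(\imaginaryunit v,-\imaginaryunit A(v))$ from anti-linearity, reading off the eigenspace decomposition, checking the pairing restricts to $\varphi(v)+\overline{\varphi(v)}$, and verifying the comparison maps $(v,w)\mapsto(v,B\circ A^{-1}(w))$ preserve all the structure. Your write-up is somewhat more explicit than the paper's---in particular your injectivity argument for $\iota^{\mathbb{C}}$ and your care in distinguishing the standard complex structure on $V\oplus W$ from the transported $J$ are spelled out where the paper leaves them implicit---but the mathematical content is the same.
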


\subsubsection{Model of complex tangent and cotangent spaces of Teichm\"uller space}
\label{subsubsec:models_tangent}
Fix a point $x_0=(M_0,f_0)\in \teich_{g,m}$.
We consider the complex Banach space $L^\infty_{(1,-1)}(M_0)$ of bounded measurable $(1,-1)$-forms $\nu=\nu(z)dzd\overline{z}^{-1}$ on $M_0$.
Let $\overline{\mathcal{Q}}_{x_0}$ be the Banach space of integrable anti-holomorphic $(0,2)$-forms $Q=Q(z)d\overline{z}^2$ with $L^1$-norm on $M_0$.
Then, there is a natural pairing
\begin{equation}
\label{eq:01-pairing_pre}
\llangle\nu,Q\rrangle^-=\iint_{M_0}\nu(z)Q(z)\frac{\sqrt{-1}}{2}dz\wedge d\overline{z}.
\end{equation}
We define the quotient space
$$
T'_{x_0}(\teich_{g,m})=L^\infty_{(1,-1)}(M_0)/\{\nu\in L^\infty_{(1,-1)}(M_0)\mid
\mbox{$\llangle\nu,Q\rrangle^-=0$, $\forall Q\in \overline{\mathcal{Q}}_{x_0}$}\}.
$$
Then, the pairing \eqref{eq:01-pairing_pre} descends to the pairing
$$
T'_{x_0}(\teich_{g,m})\times  \overline{\mathcal{Q}}_{x_0}
\ni ([\nu],Q)\mapsto \langle [\nu],Q\rangle^-=\llangle\nu,Q\rrangle^-.
$$
The complex conjugations
\begin{align*}
T_{x_0}(\teich_{g,m})&\ni [\mu]\mapsto \overline{[\mu]}=[\overline{\mu}]\in T'_{x_0}(\teich_{g,m}) \\
\mathcal{Q}_{x_0}&\ni q\mapsto \overline{q}\in \overline{\mathcal{Q}}_{x_0}
\end{align*}
are anti-complex isomorphisms satisfying
$$
\langle\overline{[\mu]},\overline{q}\rangle^-=\overline{\langle [\mu],q\rangle}.
$$
Set
\begin{align*}
T_{x_0}(\teich_{g,m})^{\mathbb{C}}&=T_{x_0}(\teich_{g,m})\oplus T'_{x_0}(\teich_{g,m}) \\
T_{x_0}^*(\teich_{g,m})^{\mathbb{C}}&=\mathcal{Q}_{x_0}\oplus \overline{\mathcal{Q}}_{x_0}.
\end{align*}
From \Cref{prop:complex_structure},
the spaces $T_{x_0}(\teich_{g,m})^{\mathbb{C}}$ and $T^*_{x_0}(\teich_{g,m})^{\mathbb{C}}$ are regarded as models of the complexifications of the real tangent space and the real cotangent space of the Teichm\"uller space.
We denote by $[\mu]\oplus [\nu]$ and $q\oplus Q$ elements of $T_{x_0}(\teich_{g,m})^{\mathbb{C}}$ and $T^*_{x_0}(\teich_{g,m})^{\mathbb{C}}$,
where $[\mu]\in T_{x_0}(\teich_{g,m})$, $[\nu]\in T'_{x_0}(\teich_{g,m})$,
$q\in \mathcal{Q}_{x_0}$ and $Q\in \overline{\mathcal{Q}}_{x_0}$.
Under this situation, the models of the underlying real tangent and cotangent spaces are
\begin{align*}
&\{[\mu]\oplus [\overline{\mu}]\mid [\mu]\in T_{x_0}(\teich_{g,m})\}\subset  T_{x_0}(\teich_{g,m})^{\mathbb{C}}\\
&\{q \oplus \overline{q}\mid q\in \mathcal{Q}_{x_0}\}\subset
T_{x_0}(\teich_{g,m})^{\mathbb{C}}.
\end{align*}
Furthermore, it is natural to associate the real tangent vector directed by a Beltrami differential $\mu$ with  
\begin{equation}
\label{eq:holomorphic_tangent_to_real}
v=[\mu]\oplus [\overline{\mu}]\in T_{x_0}(\teich_{g,m})\oplus T'_{x_0}(\teich_{g,m})=T_{x_0}(\teich_{g,m})^{\mathbb{C}}.
\end{equation}
Following \eqref{eq:decomposition_complex_cotangent}, we define a pairing between $T_{x_0}(\teich_{g,m})^{\mathbb{C}}$ and $T_{x_0}^*(\teich_{g,m})^{\mathbb{C}}$ by
$$
\langle[\mu]\oplus [\nu],q\oplus Q\rangle^{c}=\langle [\mu],q\rangle+\langle [\nu],Q\rangle^-,
$$
where the superscript ``$c$" in the left-hand side stands for the first letter of ``complexification". Note that this pairing is consistent with the pairing defined in \eqref{eq:pairing} between the holomorphic tangent and cotangent spaces.

The complex conjugations on $T_{x_0}(\teich_{g,m})^{\mathbb{C}}$ and $T^*_{x_0}(\teich_{g,m})^{\mathbb{C}}$ is defined by
$$
\xymatrix@C=36pt@R=4pt@M=4pt{
T_{x_0}(\teich_{g,m})^{\mathbb{C}} \ar[r]^-{\overline{\cdot}} & T_{x_0}(\teich_{g,m})^{\mathbb{C}} \\
[\mu]\oplus [\nu] \ar@{|->}[r] & \overline{[\mu]\oplus [\nu]}=[\overline{\nu}]\oplus [\overline{\mu}]
}
$$
$$
\xymatrix@C=36pt@R=4pt@M=4pt{
T^*_{x_0}(\teich_{g,m})^{\mathbb{C}} \ar[r]^-{\overline{\cdot}} & T^*_{x_0}(\teich_{g,m})^{\mathbb{C}} \\
q\oplus Q \ar@{|->}[r] & \overline{q\oplus Q}=\overline{Q}\oplus \overline{q}.
}
$$
Thus, the real tangent space $T_{x_0}(\teich_{g,m})^{\mathbb{R}}$ and the real cotangent space $T^*_{x_0}\!(\teich_{g,m})^{\mathbb{R}}$  at $x_0$ are identified with the invariant subspaces of $T_{x_0}(\teich_{g,m})^{\mathbb{C}}$ and $T^*_{x_0}(\teich_{g,m})^{\mathbb{C}}$ under the complex conjugation.

\subsection{The Bers embedding and the Bers compactification}
Represent $M_0 = \mathbb{D} / \Gamma_0$ by a Fuchsian group $\Gamma_0$ acting on the unit disk $\mathbb{D}$. Let $A_2(\mathbb{D}^*, \Gamma_0)$ denote the complex Banach space of holomorphic automorphic forms $\varphi$ of weight $-4$ on $\mathbb{D}^* = \hat{\mathbb{C}} \setminus \overline{\mathbb{D}}$ with respect to $\Gamma_0$, equipped with the norm
\[
\|\varphi\|_\infty = \sup_{z \in \mathbb{D}} (|z|^2 - 1)^2 |\varphi(z)|.
\]

For $\mu \in BL^\infty(M_0)$, let $\tilde{\mu} \in BL^\infty(\mathbb{D})$ be a lift of $\mu$ via the covering map $\mathbb{D} \to M_0 = \mathbb{D} / \Gamma_0$. Extend $\tilde{\mu}$ to $\hat{\mathbb{C}}$ by setting it to zero outside the unit disk. Fix three points on $\partial \mathbb{D}$, and let $\normqc{\tilde{\mu}}$ be the normalized quasiconformal map solving the Beltrami equation with coefficient $\tilde{\mu}$ that fixes those three points.

Then the map
\begin{equation}
\label{eq:Bers_projection2}
BL^\infty(M_0) \ni \mu \mapsto
\frac{(\normqc{\tilde{\mu}}|_{\mathbb{D}^*})'''}{(\normqc{\tilde{\mu}}|_{\mathbb{D}^*})'}
-
\frac{3}{2}
\left( \frac{(\normqc{\tilde{\mu}}|_{\mathbb{D}^*})''}{(\normqc{\tilde{\mu}}|_{\mathbb{D}^*})'} \right)^2
\in A_2(\mathbb{D}^*, \Gamma_0)
\end{equation}
defined via the Schwarzian derivative descends to a holomorphic embedding
\[
\mathfrak{B}_{x_0} \colon \teich_{g,m} \to A_2(\mathbb{D}^*, \Gamma_0).
\]
This embedding $\mathfrak{B}_{x_0}$ is called the \emph{Bers embedding}\index{Bers embedding} of $\teich_{g,m}$ with basepoint $x_0$.  
Its image $\Bers{x_0} \subset A_2(\mathbb{D}^*, \Gamma_0)$ is referred to as the \emph{Bers slice}\index{Bers slice} with basepoint $x_0$. The term “slice” comes from the fact that the image of the Bers embedding coincides with a component of the intersection between the quasi-Fuchsian space and the space of complex projective structures on $x_0$, within the space of $\psl_2(\mathbb{C})$-representations of $\pi_1(\Sigma_{g,m})$ (cf.~\cite{MR910228}).

The Nehari--Kraus theorem asserts that $\Bers{x_0}$ is a bounded domain that contains a ball of radius $2$ and is contained in a ball of radius $6$.  
Hence, the closure $\overline{\Bers{x_0}}$ in $A_2(\mathbb{D}^*, \Gamma_0)$ provides a compactification of the Teichm\"uller space $\teich_{g,m} \cong \Bers{x_0}$.  
This compactification is called the \emph{Bers compactification}\index{Bers compactification} of $\teich_{g,m}$ with basepoint $x_0$.

Since $\teich_{g,m}$ is holomorphically convex, the Bers slice $\Bers{x_0}$ is a pseudoconvex domain. Furthermore, Shiga \cite{MR766636} showed that $\Bers{x_0}$ is polynomially convex, meaning that any holomorphic function on $\Bers{x_0}$ can be approximated uniformly on compact subsets of $\Bers{x_0}$ by holomorphic functions defined on the entire space $A_2(\mathbb{D}^*, \Gamma_0)$.
Krushkal \cite{MR1119946} observed that the Teichm\"uller space $\teich_{g,m}$ is hyperconvex.

\subsection{Action of the mapping class group}
\label{subsec:Action_MCG}
The \emph{mapping class group}\index{mapping class group} $\operatorname{MCG}(\Sigma_{g,m})$ of $\Sigma_{g,m}$ is the group of homotopy classes of orientation-preserving homeomorphisms of $\Sigma_{g,m}$.
We denote by $[\omega]$ the mapping class of an orientation-preserving homeomorphism $\omega$ of $\Sigma_{g,m}$.
The mapping class group $\operatorname{MCG}(\Sigma_{g,m})$ acts on $\teich_{g,m}$ by
\begin{equation}
\label{eq:mcg_action_T}
[\omega](M,f)=(M,f\circ \omega^{-1}).
\end{equation}
The action is isometric in terms of the Teichm\"uller distance. The action of the mapping class group corresponds to the change of markings and the quotient map
$$
\teich_{g,m}\to \mathcal{M}_{g,m}=\teich_{g,m}/\operatorname{MCG}(\Sigma_{g,m})
$$
forgets the markings.
The space $\mathcal{M}_{g,m}$ is the \emph{moduli space}\index{moduli space} of Riemann surfaces of analytically finite type $(g,m)$.

\subsubsection*{{\bf Action as Automorphisms}}
The action \eqref{eq:mcg_action_T} of the mapping class group on the Teichm\"uller space is biholomorphic with respect to the complex structure. The homomorphism induced by this action,
\[
\operatorname{MCG}(\Sigma_{g,m}) \to \operatorname{Aut}(\teich_{g,m}),
\]
is an isomorphism except in the cases $(g,m) = (0,3)$, $(0,4)$, $(1,1)$, $(1,2)$, and $(2,0)$. The image  $\operatorname{Mod}_{g,m}$ of the homomorphism is called the \emph{Teichm\"uller modular group}\index{Teichm\"uller modular group}.

It is known that $\teich_{0,3}$ consists of a single point, and that
\[
\teich_{0,4} \cong \teich_{1,1}, \quad
\teich_{0,5} \cong \teich_{1,2}, \quad
\teich_{0,6} \cong \teich_{2,0}.
\]
Since both $\teich_{0,4}$ and $\teich_{1,1}$ can be identified biholomorphically with the upper half-plane, we have
\[
\operatorname{Aut}(\teich_{0,4}) \cong \operatorname{Aut}(\teich_{1,1}) \cong \psl_2(\mathbb{R}).
\]
The mapping class group $\operatorname{MCG}(\Sigma_{1,2})$ does not act effectively on $\teich_{1,2}$, and
\[
\operatorname{Aut}(\teich_{1,2}) \cong \operatorname{Aut}(\teich_{0,5}) \cong \operatorname{MCG}(\Sigma_{0,5}).
\]
Similarly, the mapping class group $\operatorname{MCG}(\Sigma_{2,0})$ does not act effectively on $\teich_{2,0}$, and
\[
\operatorname{Aut}(\teich_{2,0}) \cong \operatorname{Aut}(\teich_{0,6}) \cong \operatorname{MCG}(\Sigma_{0,6}) \cong \operatorname{MCG}(\Sigma_{2,0}) / \mathbb{Z}_2.
\]
See Earle--Kra \cite{MR0430319} or Patterson \cite{MR299774, MR310239} for further details.

\section{Background from Thurston theory}
\label{sec:Back_Thurston_theory}
In this section, we briefly recall Thurston's theory on surface topology. We give the definition and fundamental properties of the space of measured laminations, and then define the Thurston compactification of the Teichm\"uller space.

%
%

\subsection{Measured laminations}
In this section, we shall recall the basic properties measured geodesic laminations. For references, see \cite{MR1810534}.

\subsubsection{Geometric formulation}
A \emph{geodesic lamination}\index{geodesic lamination} on a hyperbolic surface $M$ is a closed set consisting of disjoint complete geodesics, called \emph{leaves}\index{leaf}\index{geodesic lamination!leaf}. A \emph{transverse measure}\index{transverse measure}\index{measured geodesic lamination!transverse measure} for a geodesic lamination $\lambda$ is an assignment of a Radon measure to each arc $k$ transverse to $\lambda$, subject to the following two conditions:
\begin{enumerate}
\item If an arc $k'$ is contained in a transverse arc $k$, then the measure assigned to $k'$ is the restriction of the measure assigned to $k$; and
\item
If two arcs $k$ and $k'$ are homotopic through a family of arcs transverse to $\lambda$, then the homotopy sends the measure on $k$ to the measure on $k'$.
\end{enumerate}
A \emph{measured geodesic lamination}\index{measured geodesic lamination} $\lambda$ consists of a compact geodesic lamination $|\lambda|$, called the \emph{support}\index{support}\index{measured geodesic lamination!support} of $\lambda$, endowed with a transverse measure (also denoted by $\lambda$) whose support coincides with the entire set $|\lambda|$. For $t > 0$ and a measured geodesic lamination $\lambda$, the notation $t\lambda$ denotes the measured lamination with the same support as $\lambda$ and with transverse measure scaled by a factor of $t$; that is, $t\lambda$ assigns $t$ times the measure that $\lambda$ assigns to each transverse arc.
A simple closed geodesic $\alpha$ on $M$ is recognized as a measured lamination in the manner that for any transverse arc $k$ to $\alpha$, the transverse measure is defined as the Dirac measure with atoms at $k\cap \alpha$.

An arc $k$ in $M$ is said to be \emph{generic} with respect to simple geodesics if it is transverse to every simple geodesic on $M$. Birman and Series showed that the union of all simple geodesics has Hausdorff dimension $1$. Hence, almost every geodesic arc is generic, and every arc can be approximated by a generic arc (cf. \cite{MR793185}).

Let $M$ be a complete hyperbolic surface of genus $g$ with $m$ punctures.
Let $\ml(M)$ denote the set of measured geodesic laminations on $M$.
A sequence $\{\lambda_n\}_n \subset \ml(M)$ is said to converge to a measured geodesic lamination $\lambda \in \ml(M)$ if, for any generic arc $k$, the transverse measures on $k$ assigned by $\lambda_n$ converge weakly to the transverse measure assigned by $\lambda$.
Thurston showed that there exists a finite family $\{k_j\}_{j=1}^N$ of generic arcs on $M$ such that the map
$$
\ml(M)\ni \lambda \mapsto
\left(\lambda(k_1), \ldots, \lambda(k_N)\right)\in \mathbb{R}^{N}
$$
where $\lambda(k)$ denotes the total mass of the transverse measure on $k$ assigned by $\lambda$, defines a homeomorphism from $\ml(M)$ onto a piecewise linear submanifold of $\mathbb{R}^N$.

The \emph{intersection number}\index{intersection number}\index{measued geodesic lamination!intersection number} between $\lambda\in \ml(M)$ and the weighted simple closed geodesic $t\alpha$ (that is a measured lamination) is defined by
$$
I_M(\lambda,t\alpha)=t\inf_{\alpha'\sim \alpha}\lambda(\alpha').
$$
In particular, for weighted simple geodesics $t\alpha$ and $s\beta$,
the intersection number
$$
I_M(t\alpha,s\beta)=ts \inf_{\beta'\sim \beta}\alpha(\beta')
=ts\,\inf\{\alpha\cap \beta'\mid \beta'\sim \beta\}
=ts\,i(\alpha,\beta)
$$
coincides with $ts$ times the geometric intersection number $i(\alpha,\beta)$ of $\alpha$, $\beta\in \mathcal{S}$.
Thurston observed that the intersection number extends continuously to $\ml(M) \times \ml(M)$, and satisfies $I_M(\lambda_1, \lambda_2) = I_M(\lambda_2, \lambda_1)$ for all $\lambda_1, \lambda_2 \in \ml(M)$.
In its original definition, the intersection number $I_M(\lambda_1, \lambda_2)$ on $M$ is given as the total mass of a measure $\lambda_1 \times \lambda_2$, where $\lambda_1 \times \lambda_2$ is defined as the product of the two transverse measures in any small open set where the laminations are transverse to each other, and is set to be zero on the leaves that the two laminations have in common.

\subsubsection{Topological formulation}
\label{subsub:topological_ML}
We discuss the topological description of measured geodesic laminations.
Let $\mathcal{S}$ be the set of homotopy classes of non-trivial and non-peripheral simple closed curves on $\Sigma_{g,m}$. Consider the closure $\ml_{g,m}$ of the image
$$
\mathbb{R}_{+}\times \mathcal{S}\ni (t,\alpha)\mapsto [\mathcal{S}\ni \beta\mapsto t\,i(\alpha,\beta)]\in \mathbb{R}_{\ge 0}^{\mathcal{S}}
$$
with respect to the topology of pointwise convergence,
where $\mathbb{R}_+$ is the set of positive real numbers.
By definition, when $\lambda_{t,\alpha}\in \ml_{g,m}$ is the image of $(t,\alpha)\in \mathbb{R}_+\times \mathcal{S}$,
$$
\lambda_{t,\alpha}(\beta)=t\,i(\alpha,\beta),
$$
that is regarded as the intersection number between a weighted simple closed curve $(t,\alpha)$ and a simple closed curve $\beta$. Henceforth, we denote by $t\alpha$ the image $\lambda_{t,\alpha}$ of $(t,\alpha)$.
The space $\ml_{g,m}$ contains the weighted simple closed curves $\mathcal{WS}=\{t\alpha\mid t\ge 0, \alpha\in \mathcal{S}\}$ as a dense subset.
A measured geodesic lamination $\lambda \in \ml_{g,m}$ is said to be \emph{filling}\index{filling}\index{measured geodesic lamination!filling} if $\lambda(\alpha) > 0$ for all $\alpha \in \mathcal{S}$. It is called \emph{minimal}\index{minimal}\index{measured geodesic lamination!minimal} if every half-leaf of the support of the corresponding measured geodesic lamination on $M_0$ is dense in its support.

Let $x=(M,f)\in \teich_{g,m}$. The map
\begin{equation}
\label{eq:identify_ML}
\ml(M)\ni \lambda\mapsto \left[
\mathcal{S}\ni \beta\mapsto I_M(\lambda,f(\beta)^*)
\right]\in \mathbb{R}_{+}^{\mathcal{S}}
\end{equation}
induces a homeomorphism from $\ml(M)$ onto $\ml_{g,m}-\{0\}$,
where $f(\beta)^*$ is the simple closed geodesic (and hence it is a measured geodesic lamination) on $M$ which is in the homotopy class $f(\beta)$. From the definition,
we have
$$
I_M(f(\alpha)^*,f(\beta)^*)=i(\alpha,\beta)
$$
for $\alpha$, $\beta\in \mathcal{S}$.
The space $\ml_{g,m}$ is called the \emph{space of measured geodesic laminations}\index{space of measured geodesic laminations}\index{measured geodesic lamination!space of measured geodesic laminations} on $\Sigma_{g,m}$.

The space $\ml_{g,m}$ admits a natural action of the positive numbers $\mathbb{R}_+$ by
$$
\mathbb{R}_+\times \ml_{g,m}\ni (t,\lambda)\mapsto t\lambda\in \ml_{g,m}.
$$
The quotient space $\pml_{g,m}=(\ml_{g,m}-\{0\})/\mathbb{R}_+$ is called the \emph{space of projective measure laminations}\index{space of projective measured geodesic laminations}\index{projective measured geodesic lamination!space of projective measured geodesic laminations} on $\Sigma_{g,m}$.

\begin{theorem}[Thurston]
\label{thm:Thurston_mf}
As a piecewise linear manifold, $\ml_{g,m}$ is isomorphic to $\mathbb{R}^{6g - 6 + 2m}$.
The space $\pml_{g,m}$ is homeomorphic to the sphere $\mathbb{S}^{6g-7+2m}$.
\end{theorem}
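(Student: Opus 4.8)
The plan is to equip $\ml_{g,m}$ with explicit piecewise-linear charts, read off its dimension from these charts, and then deduce the global topology of $\ml_{g,m}$ and $\pml_{g,m}$ from the cone structure induced by the scaling action of $\mathbb{R}_+$.

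First I would construct the charts using train tracks. Recall that a complete (maximal, recurrent) train track $\tau$ on $\Sigma_{g,m}$ carries a measured lamination $\lambda$ precisely when $\lambda$ can be isotoped into a fibered neighborhood of $\tau$ transversally to the ties; the transverse measure then assigns a nonnegative weight to each branch, and these weights satisfy the linear switch conditions at each switch. The resulting weight vectors fill out a polyhedral cone $V(\tau)$, and a standard Euler-characteristic count for a complete train track (whose complementary regions are trigons, together with once-punctured monogons at the punctures) shows that $\dim V(\tau) = 6g-6+2m$; the same number arises transparently from Dehn--Thurston coordinates, namely an intersection coordinate and a twisting coordinate for each of the $3g-3+m$ curves of a pants decomposition, giving $2(3g-3+m) = 6g-6+2m$. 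The open sets of laminations carried with strictly positive weights, taken over all complete $\tau$, cover $\ml_{g,m}$, and on overlaps the change of coordinates is piecewise linear; hence $\ml_{g,m}$ is a piecewise-linear manifold of dimension $6g-6+2m$. This simultaneously recovers the embedding into $\mathbb{R}^N$ of the geometric formulation, with the generic arcs $k_j$ playing the role of the branches.

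Next I would exploit the scaling action $(t,\lambda)\mapsto t\lambda$, which is linear in every chart and acts freely and properly on $\ml_{g,m}\setminus\{0\}$. Choosing a filling system of simple closed curves $\beta_1,\dots,\beta_r$, the function $\lambda \mapsto \sum_j \lambda(\beta_j)$ is positive, proper, and homogeneous of degree one, so its unit level set is a compact global transversal. This splits the quotient map $\ml_{g,m}\setminus\{0\}\to\pml_{g,m}$ as a trivial $\mathbb{R}_+$-bundle, yielding a homeomorphism $\ml_{g,m}\setminus\{0\}\cong \pml_{g,m}\times\mathbb{R}_+$ and exhibiting $\ml_{g,m}$ as the open cone on $\pml_{g,m}$; in particular $\pml_{g,m}$ is a compact piecewise-linear manifold of dimension $6g-7+2m$.

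The remaining, and hardest, step is to identify $\pml_{g,m}$ with the standard sphere $\mathbb{S}^{6g-7+2m}$, equivalently to identify the cone $\ml_{g,m}$ with $\mathbb{R}^{6g-6+2m}$ rather than with the open cone on some merely homotopy-theoretic sphere. Here I would assemble the piecewise-linear charts $V(\tau)$ around the cone point $0$ into a single star-shaped structure and show directly, using the intersection-number coordinates, that they fit together to realize $\ml_{g,m}$ as a piecewise-linear ball; radial projection then gives $\pml_{g,m}\cong\mathbb{S}^{6g-7+2m}$. I expect the main obstacle to lie precisely in this global gluing, and especially in the correct normalization and folding of the twisting coordinates when an intersection number degenerates to $0$ --- the regime in which the naive coordinate image fails to be a manifold and must be repaired by an identification. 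This bookkeeping is the technical heart of the statement; once the global ball structure is in place, both assertions of the theorem follow simultaneously.
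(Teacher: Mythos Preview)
The paper does not prove this theorem. It is stated in \S\ref{subsub:topological_ML} as a background result attributed to Thurston, with the references \cite{MR568308} (Fathi--Laudenbach--Po\'enaru) and \cite{MR1810534} serving as sources for the proof. There is therefore nothing in the paper to compare your argument against.

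That said, your outline is the standard route and is essentially the one in the references the paper cites: train-track (or Dehn--Thurston) charts give the PL structure and the dimension, a proper homogeneous function gives the cone structure, and the delicate point is indeed the global identification $\ml_{g,m}\cong\mathbb{R}^{6g-6+2m}$ rather than merely ``cone on some PL $(6g-7+2m)$-manifold.'' Your identification of the main obstacle --- the folding of twist coordinates when an intersection number vanishes --- is accurate; in the literature this is handled by the explicit combinatorics of the Dehn--Thurston parametrization (see \cite[Expos\'e~6]{MR568308} or \cite[\S3.1]{MR1144770}), which pieces the half-space charts into a single copy of $\mathbb{R}^{6g-6+2m}$. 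Your sketch is correct as far as it goes, but you should be aware that this last step is not a soft argument: one really does write down the global homeomorphism and check it, rather than invoking an abstract recognition principle for PL balls.
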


\subsubsection{Uniquely ergodic measured laminations}
\label{subsub:uel}
A measured geodesic lamination $\lambda \in \ml_{g,m}$ is said to be \emph{uniquely ergodic}\index{uniquely ergodic}\index{mesured geodesic lamination!uniquely ergodic} if $i(\lambda, \mu) = 0$ for some $\mu \in \ml_{g,m} \setminus \{0\}$ implies that $[\lambda] = [\mu]$ in $\pml_{g,m}$. Any uniquely ergodic measured lamination is automatically minimal; otherwise, it would contain a proper measured geodesic sublamination $\lambda'$ such that $i(\lambda', \lambda) = 0$ (cf.~\cite[Proposition 3]{MR1810534}). Furthermore, when $3g - 3 + m > 1$, every uniquely ergodic measured lamination is also filling. Otherwise, $[\lambda] \in \mathcal{S} \subset \pml_{g,m}$ and $i(\lambda, \beta) = 0$ for some $\beta \in \mathcal{S}$ with $\lambda \ne \beta$.

Moreover, for any uniquely ergodic measured lamination $\lambda \in \ml(M)$, the transverse measure on the support $|\lambda|$ is unique up to scaling. Otherwise, there would exist another measured lamination $\lambda'$ with $|\lambda'| = |\lambda|$ and $[\lambda'] \ne [\lambda]$. However, since the supports coincide, we would have $i(\lambda', \lambda) = 0$, contradicting the unique ergodicity of $\lambda$ (see, e.g., \cite[Theorem 1.12]{MR662738}).

We denote by $\ml^{\mathrm{ue}}_{g,m}$ and $\pml^{\mathrm{ue}}_{g,m}$ the sets of uniquely ergodic (projective) measured laminations in $\ml_{g,m}$ and $\pml_{g,m}$, respectively.

\begin{remark}
The definition of unique ergodicity used here is not the standard one for uniquely ergodic measured geodesic laminations. In the usual definition, a measured geodesic lamination $\lambda$ is said to be uniquely ergodic if its support $|\lambda|$ admits a unique transverse measure, up to scalar multiplication. Under this definition, a uniquely ergodic measured foliation is not necessarily filling; see, e.g., \cite[\S4]{MR576605}.
\end{remark}

\subsection{Action of mapping class group}
\label{subsec:Action_MCG_on_ML}
It is natural to define the action of $[\omega] \in \operatorname{MCG}(\Sigma_{g,m})$ on the set of simple closed curves $\mathcal{S}$ by declaring that, for $\alpha \in \mathcal{S}$, $[\omega](\alpha)$ is the homotopy class of $\omega(\alpha')$, where $\alpha'$ is a representative curve of the class $\alpha$.
A mapping class $[\omega] \in \operatorname{MCG}(\Sigma_{g,m})$ acts on $\ml_{g,m}$ by
\begin{equation}
\label{eq:actionMCG_on_ML}
[\omega](\lambda) = \lambda \circ [\omega]^{-1}.
\end{equation}
This action is a natural extension of the action on weighted simple closed curves. Indeed,
\begin{align*}
[\omega](t\alpha)(\beta)
&= (t[\omega](\alpha))(\beta) = t\, i([\omega](\alpha), \beta) \\
&= t\, i(\alpha, [\omega]^{-1}(\beta)) = (t\alpha)([\omega]^{-1}(\beta)),
\end{align*}
for $\beta \in \mathcal{S}$, regarded as an element of $\ml_{g,m}$.  
Since the set of weighted simple closed curves is dense in $\ml_{g,m}$, this formula extends to the entire space $\ml_{g,m}$. Moreover, since $[\omega](t\lambda) = t[\omega](\lambda)$ for $\lambda \in \ml_{g,m}$ and $t \ge 0$, the action in \eqref{eq:actionMCG_on_ML} naturally descends to an action on $\pml_{g,m}$ given by
\begin{equation}
\label{eq:actionMCG_on_PML}
[\omega]([\lambda]) = \big[[\omega](\lambda)\big],
\end{equation}
for $[\lambda] \in \pml_{g,m}$ and $[\omega] \in \operatorname{MCG}(\Sigma_{g,m})$.

\subsection{Thurston measure}
In this section, we recall a canonical ${\rm MCG}(\Sigma_{g,m})$-invariant measure on $\ml_{g,m}$, called the \emph{Thurston measure} on $\ml_{g,m}$.

\subsubsection{Train tracks}
We begin by recalling the notion of a \emph{train track} in order to define $PL$-local charts on $\ml_{g,m}$. For references, see \cite{MR1144770}.

A \emph{train track}\index{train track} $\tau$ on a smooth surface $S$ is a $CW$-complex on $S$ with the following properties:
\begin{itemize}
\item
$\tau$ is $C^1$ away from its switch (vertex) and has tangent vectors at every point; and
\item
for each component of $R$ of $S\setminus \tau$, the double of $R$ along the interior edges of $\partial R$ has negative Euler characteristic.
\end{itemize}
The switches of a train track are the points where three or more smooth arcs come together.
The edges of a train track are called \emph{branches}\index{branch}\index{train track!branch} which connect switches, and each branch is a smooth path with a well-defined tangent vector. The inward-pointing tangent of an edge divides
the branches that are incident to a vertex into incoming and outgoing branches.
Let $W(\tau)$ be a real vector space consisting of real-valued functions on the set of branches of $\tau$ which satisfy the \emph{switch condition} that at each switch the sum of the values of the incoming branches is equal to the sum of the values of the outgoing branches. Let $V(\tau)\subset  W(\tau)$ be the cone consisting of non-negative functions. Any function in $V(\tau)$ is called a \emph{transverse measure}\index{transverse measure}\index{train track!transverse measure} on $\tau$. A train track is called \emph{generic} if all switches are at most trivalent. A train track is said to be \emph{maximal}\index{maximal}\index{train track!maximal} if each component of its complement is either a trigon, or a once punctured monogon. For simplicity, we always assume each train track is generic.

A train track $\tau$ is said to be \emph{recurrent}\index{recurrent}\index{train track!recurrent} if there exists $\mu \in V(\tau)$ with $\mu(b) > 0$ for all branches $b$ of $\tau$. A train track $\tau$ is called \emph{transversely recurrent}\index{transverse recurrent}\index{train track!transverse recurrent} if, for each branch $b$ of $\tau$, there exists a multicurve $C_b$ that hits $\tau$ efficiently, meaning that $C_b$ intersects $b$ at least once and contains no arc connecting to $\tau$ that is homotopic to a smooth arc in the boundary of a component of $S \setminus \tau$ (cf. \cite[p.23, Remark]{MR1144770}). A train track that is both recurrent and transversely recurrent is called \emph{birecurrent}\index{birecurrent}\index{train track!birecurrent}.

Let $\tau$ be a train track on a hyperbolic surface $M$ of finite area. A measured geodesic lamination $\lambda$ is said to be \emph{carried} by $\tau$, denoted by $\lambda\prec \tau$,  if there exists a differentiable map $f\colon M \to M$, homotopic to the identity, that takes the support $|\lambda|$ to $\tau$, such that the restriction of $df$ to any tangent line of $|\lambda|$ is non-singular. When a measured geodesic lamination $\lambda$ is carried by $\tau$, the carrying map defines a counting measure $\lambda(b)$ --- that is, the total mass of $\lambda$ on a transverse arc to $b$ --- for each branch $b$ of $\tau$, and the transverse measure $\lambda$ induces a transverse measure on $\tau$, which is also denoted by $\lambda$ for simplicity. Any measured geodesic lamination is carried by a birecurrent train track (cf. \cite[Corollary 1.7.6]{MR1144770}). Any birecurrent train track is a subtrack of a birecurrent and maximal train track (cf. \cite[Theorem 1.3.6]{MR1144770}).

Fix $x_0 = (M_0, f_0) \in \teich_{g,m}$. By the Koebe uniformization theorem, when $2g - 2 + m > 0$, the surface $M_0$ admits a unique hyperbolic metric of finite area that is compatible with its conformal structure (cf.\ \cite[III, 11G]{MR0114911}).

Suppose that the marking $f_0$ is a diffeomorphism. We identify $\ml_{g,m}$ with $\ml(M_0)$ via $f_0$, as in \eqref{eq:identify_ML}.
For a train track $\tau$ on $\Sigma_{g,m}$, let $\ml_{g,m}(\tau) \subset \ml_{g,m}$ denote the set of measured laminations $\lambda \in \ml(M_0) \cong \ml_{g,m}$ that are carried by $f_0(\tau)$. Let $\ml^\circ_{g,m}(\tau)$ be the subset of $\ml_{g,m}(\tau)$ consisting of those $\mu$ such that $\mu(b) > 0$ for every branch $b$ of $\tau$.
When $\tau$ is recurrent, any transverse measure $\lambda \in V(\tau)$ defines a measured lamination $\lambda \in \ml_{g,m}(\tau)$, and the resulting map $V(\tau) \to \ml_{g,m}(\tau)$ is injective and continuous (cf.\ \cite[Theorem 2.7.4]{MR1144770}).
Moreover, when the train track $\tau$ is birecurrent and maximal, the set $\ml^\circ_{g,m}(\tau)$ is open in $\ml_{g,m}$ (cf.\ \cite[Lemma 3.1.2]{MR1144770}).

\subsubsection{Thurston's symplectic form and Thurston's measure}
Let $\tau$ be a generic train track.  
For $u_1, u_2 \in W(\tau)$, the symplectic pairing is defined by
\[
\omega_{\mathrm{Th}}(u_1, u_2) = \frac{1}{2} \sum_v \left( u_1(e_1) u_2(e_2) - u_1(e_2) u_2(e_1) \right),
\]
where the sum is taken over all switches $v$ of the train track, and $e_1$ and $e_2$ are the two incoming branches at $v$, as shown in Figure~\ref{fig:train_switch} (cf.\ \cite{MR850748}).
\begin{figure}
\centering
\includegraphics[width=5cm, bb=0 0 289 131]{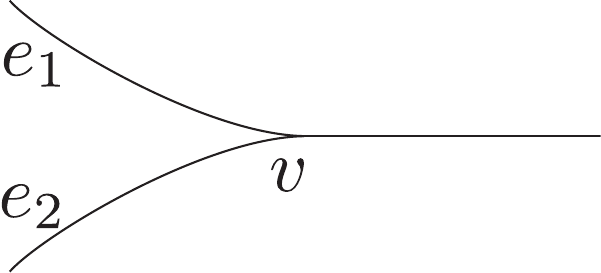}
\caption{Switches at a generic train track}
\label{fig:train_switch}
\end{figure}
This expression defines a skew-symmetric bilinear form on $W(\tau)$ (\cite[Lemma 3.2.1]{MR1144770}).  
If $\tau$ is maximal, the bilinear form $\omega_{\mathrm{Th}}$ is non-degenerate (\cite[Theorem 3.2.4]{MR1144770}).  
Hence, the skew-symmetric pairing $\omega_{\mathrm{Th}}$ defines a well-defined symplectic form on the piecewise linear manifold $\ml_{g,m}$. This symplectic form was first introduced by Thurston (unpublished) and is called the \emph{Thurston symplectic form}\index{Thurston symplectic form} on $\ml_{g,m}$.

The \emph{Thurston volume form}, or the \emph{Thurston measure}\index{Thurston measure}, $\ThursM$ on $\ml_{g,m}$ is the volume form defined by
\begin{equation}
\label{eq:Thurston_volume}
\ThursM = \frac{1}{(3g - 3 + m)!} \omega_{\mathrm{Th}}^{3g - 3 + m}.
\end{equation}
The Thurston measure $\ThursM$ is a locally finite, $\operatorname{MCG}(\Sigma_{g,m})$-invariant measure on $\ml_{g,m}$.  
Moreover, it is characterized as the unique (up to scaling) locally finite, $\operatorname{MCG}(\Sigma_{g,m})$-invariant, ergodic measure supported on the set of filling measured geodesic laminations (cf.\ \cite[Proposition 5.5, Theorem 5.7]{MR2495764} and \cite[Theorem 7.1]{MR2424174}; see also \cite{MR787893} and \cite{MR662738}).
Masur \cite{MR644018} showed that the set $\ml^{\mathrm{ue}}_{g,m}$ of uniquely ergodic measured foliations is of full measure with respect to the Thurston measure.

\subsection{Thurston compactification}

In this section, we recall a compactification of the Teichm\"uller space $\teich_{g,m}$ introduced by W.~Thurston, which describes degenerations of complete hyperbolic structures on $\Sigma_{g,m}$ of finite area.

\subsubsection{Thurston compactification}

Let $x = (M, f) \in \teich_{g,m}$. For $\alpha \in \mathcal{S}$, let $\ell_x(\alpha)$ denote the hyperbolic length of the closed geodesic on $M$ in the homotopy class $f(\alpha)$ (with respect to the hyperbolic metric compatible with the complex structure of $M$).  
Then the map
\begin{equation}
\label{eq:Thurston_compactification}
\teich_{g,m} \ni x \mapsto \big[\mathcal{S} \ni \alpha \mapsto \ell_x(\alpha)\big] \in P\mathbb{R}_{\ge 0}^\mathcal{S} = \big(\mathbb{R}_{\ge 0}^\mathcal{S} \setminus \{0\}\big) / \mathbb{R}_+
\end{equation}
is an embedding, and the closure of its image (with respect to the topology of pointwise convergence) is compact.  
This closure is called the \emph{Thurston compactification}\index{Thurston compactification} of $\teich_{g,m}$, and the boundary
\[
\partial_{Th} \teich_{g,m} := \overline{\teich_{g,m}}^{Th} \setminus \teich_{g,m}
\]
is called the \emph{Thurston boundary}\index{Thurston boundary} (cf.~\cite[Expos\'e 8]{MR568308}).  
Note that the Thurston compactification is defined within the same ambient space as the space $\pml_{g,m}$.

\begin{theorem}[Thurston]
\label{thm:thurston_compactification}
The Thurston compactification is homeomorphic to a closed ball $\mathbb{B}^{6g-6+2m}$ of dimension $6g - 6 + 2m$.  
Moreover, the image of the map \eqref{eq:Thurston_compactification} is disjoint from $\pml_{g,m}$, and the Thurston boundary $\partial_{Th} \teich_{g,m}$ coincides with $\pml_{g,m} \cong \mathbb{S}^{6g-7+2m} = \partial \mathbb{B}^{6g-6+2m}$.
\end{theorem}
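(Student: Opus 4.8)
The plan is to realize the compactification concretely through the length embedding \eqref{eq:Thurston_compactification} and to identify its frontier with the intersection-number embedding of $\pml_{g,m}$. First I would record that the map $x\mapsto[\alpha\mapsto\ell_x(\alpha)]$ is an embedding: the marked simple length spectrum $(\ell_x(\alpha))_{\alpha\in\mathcal{S}}$ determines $x$ (a finite subfamily of length functions already furnishes Fenchel--Nielsen-type coordinates, so the unprojectivized map is a homeomorphism onto its image), and one checks that two proportional length spectra must have proportionality constant $1$, so the projectivized map remains injective. Since the Teichm\"uller theorem identifies $\teich_{g,m}$ with an open ball of dimension $6g-6+2m$, its image is an embedded open ball, and it remains to understand the closure.

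The heart of the argument is the asymptotic comparison between hyperbolic length and geometric intersection number. For a sequence $x_n$ leaving every compact subset of $\teich_{g,m}$, choose normalizing constants $c_n\to\infty$ (for instance $c_n=\max_j\ell_{x_n}(\alpha_j)$ over a fixed finite family realizing coordinates on $\ml_{g,m}$) and consider the rescaled functions $c_n^{-1}\ell_{x_n}$ on $\mathcal{S}$. Using collar-lemma estimates, which bound $\ell_x(\alpha)$ above and below by intersection numbers with a fixed spanning family as the geometry degenerates, I would show that after passing to a subsequence these functions converge pointwise on $\mathcal{S}$ to $i(\lambda,\cdot)$ for some $\lambda\in\ml_{g,m}\setminus\{0\}$. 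This simultaneously yields compactness of the closure and shows that every frontier point is a projective intersection function, hence lies in the image of the intersection embedding $[\lambda]\mapsto[\alpha\mapsto i(\lambda,\alpha)]$ of $\pml_{g,m}$. Conversely, pinching along a curve in a sequence approximating $\lambda$ realizes each $[\lambda]\in\pml_{g,m}$ as such a limit, so the frontier is exactly this image; by \Cref{thm:Thurston_mf} it is homeomorphic to $\mathbb{S}^{6g-7+2m}$.

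Disjointness of $\teich_{g,m}$ from $\pml_{g,m}$ inside $P\mathbb{R}_{\ge 0}^{\mathcal{S}}$ I would handle separately and cleanly. The hyperbolic length functional extends continuously and homogeneously from $\mathcal{S}$ to all of $\ml_{g,m}$, with $\ell_x(\mu)>0$ for every $\mu\neq0$; likewise $I_M(\lambda,\cdot)$ extends continuously to $\ml_{g,m}$. If some $[\ell_x]$ coincided with $[i(\lambda,\cdot)]$, then $\ell_x=c\,I_M(\lambda,\cdot)$ on $\mathcal{S}$ for some $c>0$, and by continuity this identity would persist on all of $\ml_{g,m}$, as $\mathbb{R}_+\cdot\mathcal{S}$ is dense. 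Evaluating at $\mu=\lambda$ and using $I_M(\lambda,\lambda)=0$ (a measured lamination has no transverse self-intersection) gives $\ell_x(\lambda)=0$, contradicting positivity. Hence the image of $\teich_{g,m}$ is disjoint from $\pml_{g,m}$, and the closure is the disjoint union $\teich_{g,m}\sqcup\pml_{g,m}$.

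Finally, to upgrade this to the ball statement, I would realize the compactification as the radial (cone) compactification of the open ball $\teich_{g,m}$ by its frontier sphere. Fixing a basepoint $x_0$ and matching $\teich_{g,m}\cong\mathbb{R}^{6g-6+2m}$ with $\ml_{g,m}\cong\mathbb{R}^{6g-6+2m}$ through Fenchel--Nielsen and Dehn--Thurston coordinates, one constructs a radial parametrization in which each ray accumulates on a single frontier point and every frontier point is attained exactly once, yielding a homeomorphism onto $\mathbb{B}^{6g-6+2m}$ with boundary $\mathbb{S}^{6g-7+2m}$. I expect the main obstacle to be precisely the asymptotic length--intersection comparison: one must prove that the rescaled length functions converge to a \emph{nonzero} lamination intersection function and that the limit is \emph{independent of the extracted subsequence}, so that the boundary map is well defined and continuous. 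The collaring required to pass from ``open ball together with a sphere frontier'' to a genuine closed ball is the remaining, more technical, point.
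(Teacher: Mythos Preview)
The paper does not prove this theorem; it is stated as a classical result attributed to Thurston (with the FLP volume \cite{MR568308} serving as the implicit reference), so there is no in-paper proof to compare against.

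That said, your outline follows the standard FLP strategy and is broadly sound: the disjointness argument via $I_M(\lambda,\lambda)=0$ is exactly right, and the identification of the frontier with $\pml_{g,m}$ through rescaled length functions is the canonical route. Two comments on accuracy. First, your worry that ``the limit is independent of the extracted subsequence'' is slightly misstated: a divergent sequence in $\teich_{g,m}$ may well accumulate on several boundary points, and that is fine for compactification. What you actually need is only that \emph{every} subsequential limit lies in the intersection-number image of $\pml_{g,m}$ and that the passage to the projective quotient is continuous; uniqueness of limits is neither true nor required. Second, the final step is the genuinely delicate one, and FLP does not argue it the way you sketch. One does not simply match Fenchel--Nielsen rays with Dehn--Thurston rays; instead, FLP constructs an explicit embedding of $\teich_{g,m}\cup\pml_{g,m}$ into a finite-dimensional projective space using a fixed finite family of curves, and proves that the union is a topological ball by exhibiting a concrete collar structure (via earthquake/length coordinates) near the boundary. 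Your ``radial parametrization'' idea would need a proof that each ray in your chosen model actually limits to a single point of $\pml_{g,m}$ and that this assignment is a homeomorphism onto the sphere, which is essentially the whole content of the theorem and does not come for free from the coordinate identifications you cite.
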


\subsubsection{Action of the mapping class group}
\label{subsubsec:action_mcg_Tcomp}
The action \eqref{eq:mcg_action_T} of the mapping class group on $\teich_{g,m}$ extends continuously to the Thurston compactification.  
Take a sequence $\{x_n\}_{n=1}^\infty = \{(M_n, f_n)\}_{n=1}^\infty \subset \teich_{g,m}$ that converges to $[\lambda] \in \partial_{Th} \teich_{g,m} = \pml_{g,m}$.  
By definition, there exists a sequence of positive numbers $\{t_n\}_{n=1}^\infty$ such that
\[
t_n \ell_{x_n}(\alpha) = \ell_{x_n}(f_n(\alpha)) \to \lambda(\alpha)
\]
for all $\alpha \in \mathcal{S}$ as $n \to \infty$.
Therefore, for $[\omega] \in \operatorname{MCG}(\Sigma_{g,m})$, we have
\begin{align*}
t_n \ell_{[\omega](x_n)}(\alpha)
&= t_n \ell_{(M_n, f_n \circ \omega^{-1})}(\alpha) \\
&= t_n \ell_{M_n}(f_n \circ [\omega]^{-1}(\alpha)) \\
&= t_n \ell_{x_n}([\omega]^{-1}(\alpha)) \to \lambda([\omega]^{-1}(\alpha)) = [\omega](\lambda)(\alpha)
\end{align*}
for all $\alpha \in \mathcal{S}$, which coincides with the action on $\pml_{g,m}$ defined in \eqref{eq:actionMCG_on_PML}.
Furthermore, the action of each mapping class on the Thurston compactification is a homeomorphism (cf.\ \cite[Th\'eor\`eme III.3]{MR568308}).

\section{Background from Geometry on Teichm\"uller spaces}
\label{sec:Back_Geom_T-space}
In the previous sections, we introduced two compactifications of the Teichm\"uller space $\teich_{g,m}$: the Bers compactification, from a complex-analytic viewpoint, and the Thurston compactification, from a topological viewpoint.
We recall the definition and properties of extremal length, and discuss the geometry of these two compactifications.

\subsection{Hubard--Masur theorem and Extremal length}
Let $x = (M, f) \in \teich_{g,m}$.  
The \emph{vertical foliation}\index{vertical foliation} $v(q)$ associated with a quadratic differential $q = q(z)\,dz^2 \in \mathcal{Q}_x$ is defined as a measured geodesic lamination by the formula
\begin{equation}
\label{eq:vertical_foliation}
v(q)(\alpha) = \inf_{\alpha' \in f(\alpha)} \int_{\alpha'} \left| \operatorname{Re} \left( \sqrt{q(z)}\,dz \right) \right|
\end{equation}
for every $\alpha \in \mathcal{S}$.  
At first glance, it is not evident that the function $v(q)$ on $\mathcal{S}$, defined in terms of a quadratic differential $q \in \mathcal{Q}_x$, indeed yields a measured geodesic lamination.  
This fact is justified by a canonical correspondence between measured foliations and measured geodesic laminations, which underlies the terminology referring to $v(q)$ as the vertical \emph{foliation}.  
For further details, the reader is referred to \cite{MR568308}.

Hubbard and Masur \cite{MR523212} showed that for any $\lambda \in \ml_{g,m}$, there exists a unique holomorphic quadratic differential $q_{\lambda,x} \in \mathcal{Q}_x$ such that $\lambda = v(q_{\lambda,x})$. See Gardiner's paper \cite{MR736212} for the case of analytically finite surfaces.  
The holomorphic quadratic differential $q_{\lambda,x}$ is called the \emph{Hubbard--Masur differential}\index{Hubbard--Masur differential} for $\lambda \in \ml_{g,m}$ on $x = (M, f) \in \teich_{g,m}$.

The \emph{extremal length}\index{extremal length} of $\lambda \in \ml_{g,m}$ on $x = (M, f)$ is defined by
\[
\ext_x(\lambda) = \ext_\lambda(x) = \|q_{\lambda,x}\|,
\]
where the second expression emphasizes the extremal length as a function on $\teich_{g,m}$. When $\lambda \in \mathcal{S} \subset \ml_{g,m}$, the differential $q_{\lambda,x}$ is the Jenkins--Strebel differential corresponding to the simple closed curve $f(\lambda)$ on $M$, and the above definition of extremal length agrees with the classical one. Namely, for $\alpha \in \mathcal{S}$, the quantity $\ext_x(\alpha)$ coincides with the extremal length (in the sense of Beurling--Ahlfors \cite{MR36841}) of the family of rectifiable simple closed curves in the homotopy class $f(\alpha)$ on $M$.

The extremal length is a \emph{quasiconformal invariant}, meaning that
\begin{equation}
\label{eq:K-qc-extremal_length}
e^{-2d_T(x_1,x_2)} \ext_{x_1}(\lambda) \le
\ext_{x_2}(\lambda)
\le e^{2d_T(x_1,x_2)} \ext_{x_1}(\lambda)
\end{equation}
for $x_1, x_2 \in \teich_{g,m}$ and $\lambda \in \ml_{g,m}$ (cf. \cite[Chapter I, D]{MR2241787}).  
Furthermore, from the unique extremality discussed in \S\ref{subsec:teichmuller_theorem}, the inequality \eqref{eq:K-qc-extremal_length} is sharp in the sense that $\ext_{x_2}(\lambda)
= e^{2d_T(x_1,x_2)} \ext_{x_1}(\lambda)$ if and only if $x_2$ lies on the Teichm\"uller geodesic ray emanating from $x_1$ defined by $q_{\lambda,x_1}$.

\subsection{Gardiner's formula}
\label{subsec:Gardiner-formula-revisited}
Let $x_0=(M_0,f_0)\in \teich_{g,m}$.
Let $\mu\in L^\infty_{(-1,1)}(M_0)$.
Take $\epsilon>0$ with $\epsilon<1/\|\mu\|_\infty$, and let $g_t\colon M_0\to M_t$ be the quasiconformal map with complex coefficient $t\mu$ for $t\in \mathbb{R}$ with $|t|<\epsilon$.
Set $x_t=(M_t,g_t\circ f_0)\in \teich_{g,m}$.
Then, $\{x_t\}_{|t|<\epsilon}$ is a differentible path in $\teich_{g,m}$. Let $v\in T_{x_0}(\teich_{g,m})^{\mathbb{R}}$ be the real tangent vector of this path at $t=0$.

Let $\lambda\in \ml_{g,m}-\{0\}$.
In \cite{MR736212}, Gardiner showed that
the extremal length function $\ext_\lambda$ on $\teich_{g,m}$ for $\lambda$ is of class $C^1$ and
$$
d\,\ext_\lambda[v]=
\left.\dfrac{d}{dt}\ext_\lambda(x_t)
\right|_{t=0}
=-2{\rm Re}\iint_{M}\mu q_{\lambda,x}
=\llangle\mu,-q_{\lambda,x}\rrangle+\llangle\overline{\mu},-\overline{q_{\lambda,x}}\rrangle^-.
$$
The presence of the minus sign in the third term is due to our convention that $\lambda$ is the vertical foliation of $q_{\lambda,x_0}$.

From the representation \eqref{eq:holomorphic_tangent_to_real}, it is natural to associate the real tangent vector directed by $\mu$ with  
$$
v=[\mu]\oplus [\overline{\mu}]\in T_{x_0}(\teich_{g,m})^{\mathbb{R}}\subset T_{x_0}(\teich_{g,m})\oplus T'_{x_0}(\teich_{g,m})=T_{x_0}(\teich_{g,m})^{\mathbb{C}}.
$$
Hence, under the notation in \S\ref{subsubsec:models_tangent}, the total differential $d\,\ext_\lambda$ of the extremal length function of $\lambda\in \ml_{g,m}-\{0\}$ is represented by
$$
d\ext_\lambda|_{x_0}=-q_{\lambda,x_0}\oplus \overline{q_{\lambda,x_0}}\in T_{x_0}^*(\teich_{g,m})^{\mathbb{C}}=\mathcal{Q}_{x_0}\oplus \overline{\mathcal{Q}}_{x_0}
$$
for $x\in \teich_{g,m}$. From \eqref{eq:decomposition_complex_tangent3}, we have
\begin{equation}
\label{eq:Gardiner-formula}
\begin{cases}
&\partial\ext_\lambda|_{x_0}
=-q_{\lambda,x_0}\in \mathcal{Q}_{x_0}=\mathfrak{C}^{1,0}_{x_0}(\teich_{g,m})
\\
&\overline{\partial}\ext_\lambda|_{x_0}
=-\overline{q_{\lambda,x_0}}\in \overline{\mathcal{Q}}_{x_0}
=\mathfrak{C}^{0,1}_{x_0}(\teich_{g,m}).
\end{cases}
\end{equation}

\subsection{The Bers embedding from the theory of Kleinian groups}
Let $x_0 = (M_0, f_0) \in \teich_{g,m}$, and let $\Gamma_0$ be the Fuchsian group acting on $\mathbb{D}$ such that $M_0 = \mathbb{D} / \Gamma_0$.

\subsubsection{From Function Theory}

Let $\varphi \in \Bers{x_0}$.  
Choose $\mu \in L^\infty(M_0)$ corresponding to $\varphi$ via the Bers projection \eqref{eq:Bers_projection}, and let $\tilde{\mu} \in L^\infty(\hat{\mathbb{C}})$ denote its extension, also as defined in \eqref{eq:Bers_projection}.
Let $W_\varphi$ be the restriction to $\mathbb{D}^*$ of the normalized solution $\normqc{\tilde{\mu}}$ of the Beltrami equation with coefficient $\tilde{\mu}$.  
We renormalize $W_\varphi$ so that $W_\varphi(z) = z + o(1)$ as $z \to \infty$.  
Then $W_\varphi$ is a univalent function on $\mathbb{D}^*$, and there exists a faithful discrete representation $\chi_\varphi \colon \Gamma_0 \to \psl_2(\mathbb{C})$ such that
\begin{equation}
\label{eq:conjugation_varphi}
\chi_\varphi(\gamma) \circ W_\varphi = W_\varphi \circ \gamma
\end{equation}
for all $\gamma \in \Gamma_0$.

Jørgensen's theorem asserts that the space of faithful discrete representations of $\Gamma_0$ into $\psl_2(\mathbb{C})$ is closed in the topology of algebraic convergence (cf.\ \cite{MR427627}).  
Due to the normalization, the family $\{W_\varphi \mid \varphi \in \Bers{x_0}\}$ forms a normal family.  
Therefore, any point $\varphi$ in the Bers compactification $\overline{\Bers{x_0}}$ defines a univalent function $W_\varphi$ on $\hat{\mathbb{C}}$ and a faithful discrete representation $\chi_\varphi \colon \Gamma_0 \to \psl_2(\mathbb{C})$ satisfying \eqref{eq:conjugation_varphi}.
The image $\Gamma_\varphi := \chi_\varphi(\Gamma_0)$ for $\varphi \in \overline{\Bers{x_0}}$ is a $b$-group whose invariant component is $W_\varphi(\mathbb{D}^*)$, such that the quotient surface $W_\varphi(\mathbb{D}^*) / \Gamma_\varphi$ is biholomorphically equivalent to the mirror image of $M_0$ (cf.\ \cite[\S5, \S8]{MR0297992}).  
The group $\Gamma_\varphi$ is called a \emph{boundary group}\index{boundary group}\index{boundary group!Kleinian group} if $\varphi \in \partial \Bers{x_0}$.

\subsubsection{From the theory of hyperbolic $3$-manifolds}

The Riemann sphere $\hat{\mathbb{C}}$ is identified with the boundary at infinity of hyperbolic $3$-space $\mathbb{H}^3$, and the action of M\"obius transformations extends isometrically to $\mathbb{H}^3$ via the Poincar\'e extension (see, e.g., \cite{MR1219310}). Hence, for each $\varphi \in \overline{\Bers{x_0}}$, the quotient manifold $N_\varphi = \mathbb{H}^3 / \Gamma_\varphi$ is a complete hyperbolic $3$-manifold.

Bonahon's tameness theorem \cite{MR847953} asserts that $N_\varphi$ is homeomorphic to $\Sigma_{g,m} \times \mathbb{R}$ via a homeomorphism that is compatible with the representation $\chi_\varphi \colon \Gamma_0\ (\cong \pi_1(\Sigma_{g,m})) \to \Gamma_\varphi \cong \pi_1(N_\varphi)$, and such that each peripheral curve on $\Sigma_{g,m}$ corresponds to a rank-one cusp in $N_\varphi$.

Henceforth, we assume that $\varphi$ corresponds to a \emph{totally degenerate group}\index{totally degenerate group without accidental parabolics}\index{boundary group!totally degenerate group without accidental parabolics} without accidental parabolics; that is, there exists no hyperbolic element $\gamma \in \Gamma_0$ such that $\chi_\varphi(\gamma)$ is parabolic (cf.~\cite[\S8, \S9]{MR0297992}). Otherwise, $\varphi$ is called a \emph{cusp}\index{cusp}\index{boundary group!cusp}. The set of cusps in $\partial \Bers{x_0}$ is contained in a countable union of complex analytic subvarieties of $A_2(\mathbb{D}^*, \Gamma_0)$. Therefore, ``most'' points $\varphi$ (in the sense of dimension or Baire category) correspond to totally degenerate groups without accidental parabolics (cf.~\cite[Theorem 14]{MR0297992}).

In this case, the hyperbolic $3$-manifold $N_\varphi$ has two ends, denoted $e_+$ and $e_-$, corresponding respectively to $\Sigma_{g,m} \times (0, \infty)$ and $\Sigma_{g,m} \times (-\infty, 0)$. An end $e_s$ (for $s = \pm$) is said to be \emph{geometrically infinite}\index{geometrically infinite}\index{Kleinian group!geometrically infinite} if there exists a sequence of simple closed curves $\{\alpha_n\}_{n \in \mathbb{N}}$ on $\Sigma_{g,m}$ such that their geodesic representatives $\alpha_n^*$ in $N_\varphi$ exit the end $e_s$; otherwise, $e_s$ is called \emph{geometrically finite}\index{geometrically finite}\index{Kleinian group!geometrically finite}.

By construction, the negative end $e_-$ is always geometrically finite for all $\varphi \in \overline{\Bers{x_0}}$. When the positive end $e_+$ is also geometrically finite, the corresponding point $\varphi$ lies in the interior $\Bers{x_0}$ of the Bers compactification, and the group $\Gamma_\varphi$ is a quasi-Fuchsian group uniformizing the mirror of $x_0 = (M_0, f_0)$ and a marked Riemann surface corresponding to $\varphi$ via the Bers embedding (cf.~\cite{MR0111834}).

Otherwise, if $e_+$ is geometrically infinite, then $\varphi \in \partial \Bers{x_0}$, and any sequence of simple closed curves $\{\alpha_n\}_{n \in \mathbb{N}}$ on $\Sigma_{g,m}$ whose geodesic representatives exit $e_+$ converges (after passing to a subsequence) to a projective class $[\lambda]$ of a minimal and filling measured geodesic lamination $\lambda$ in $\pml_{g,m}$. Although the projective class $[\lambda]$ may not be uniquely determined, its support $|\lambda|$ is independent of the choice of such a sequence. This support $|\lambda|$ is called the \emph{ending lamination}\index{ending lamination}\index{Kleinian group!ending lamination} of $N_\varphi$ (and hence of $\varphi$). See \cite[\S VI]{MR847953}, \cite[\S2.5]{MR2925381}, and \cite[Definition 9.3.6]{Thuston-LectureNote}.

The Thurston double limit theorem states that the support of any minimal and filling measured geodesic lamination is realized as the ending lamination of some $\varphi \in \partial \Bers{x_0}$ (cf.~\cite[Theorem 4.1]{MR4556467}; see also \cite[Theorem 3.13]{MR1029395}).

The Ending Lamination Theorem, due to Brock--Canary--Minsky, asserts that the hyperbolic $3$-manifold $N_\varphi$ (marked by $\chi_\varphi$) is determined by its ending lamination (cf.~\cite{MR2925381}). Hence, there exists a map
\begin{equation}
\label{eq:map_PMLmf_to_BB}
\Xi_{x_0} \colon \pml_{g,m}^{\mathrm{mf}} \to \partial \Bers{x_0}
\end{equation}
that assigns to each $[\lambda] \in \pml_{g,m}^{\mathrm{mf}}$ a point $\varphi \in \partial \Bers{x_0}$ such that the ending lamination of $N_\varphi$ coincides with the support of $\lambda$,  
where $\pml_{g,m}^{\mathrm{mf}}$ denotes the set of projective classes of minimal and filling measured geodesic laminations on $\Sigma_{g,m}$.

The map $\Xi_{x_0}$ is continuous and closed---that is, it maps closed sets to closed sets (cf.~e.g., \cite[p.~190]{MR2258749}, \cite[Theorem 6.5]{MR2582104}, and \cite[Theorem 3.13]{MR1029395}).  
In particular, $\Xi_{x_0}$ is Borel measurable.
\subsection{Asymptotic behavior of Teichm\"uller geodesic rays}
Let $x_0 = (M_0, f_0) \in \teich_{g,m}$. We always identify $\ml_{g,m} \setminus \{0\}$ with $\ml(M_0)$ via \eqref{eq:identify_ML}.

With the notation of \eqref{eq:Teichmuller-ray}, for $[\lambda] \in \pml_{g,m}$ and $x = (M, f) \in \teich_{g,m}$, we refer to $\TRay{x;[\lambda]} = \TRay{q_{\lambda,x}}$ as the \emph{Teichm\"uller geodesic ray}\index{Teichm\"uller geodesic ray} emanating from $x$ in the direction $[\lambda]$.

From the Teichm\"uller theorem discussed in \S\ref{subsec:teichmuller_theorem}, the map
$$
\pml_{g,m} \times (0, \infty) \ni ([\lambda], t) \mapsto \TRay{x_0;[\lambda]}(t) \in \teich_{g,m} \setminus \{x_0\}
$$
is a homeomorphism. By the Ending Lamination Theorem, we obtain the following:

\begin{proposition}
\label{prop:limit_T-ray}
Let $x_0 \in \teich_{g,m}$.  
For any $x \in \teich_{g,m}$ and $[\lambda] \in \pml_{g,m}^{\mathrm{mf}}$, the Teichm\"uller ray $\mathfrak{B}_{x_0} \circ \TRay{x;[\lambda]}$ in $\Bers{x_0}$ converges to a totally degenerate group without accidental parabolics, whose ending lamination is equal to the support $|\lambda|$ of $\lambda$.
\end{proposition}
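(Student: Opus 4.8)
The plan is to combine the compactness of the Bers compactification with the rigidity furnished by the Ending Lamination Theorem: I will show that every subsequential limit of the ray in $\overline{\Bers{x_0}}$ is a totally degenerate group without accidental parabolics whose ending lamination is $|\lambda|$, and then use uniqueness to upgrade subsequential convergence to genuine convergence to $\Xi_{x_0}([\lambda])$.

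First I would dispatch the easy part. Since $\Bers{x_0}$ is bounded by the Nehari--Kraus theorem, $\overline{\Bers{x_0}}$ is compact; and since $\TRay{x;[\lambda]}(t)$ leaves every compact subset of $\teich_{g,m}$ as $t\to\infty$ while $\mathfrak{B}_{x_0}$ is a homeomorphism onto the open set $\Bers{x_0}$, every accumulation point of $\mathfrak{B}_{x_0}(\TRay{x;[\lambda]}(t))$ lies on $\partial\Bers{x_0}$. Fix such a point $\varphi_\infty=\lim_n\mathfrak{B}_{x_0}(\TRay{x;[\lambda]}(t_n))$ with $t_n\to\infty$. By the normal-family argument together with Jørgensen's theorem it is represented by a $b$-group $\Gamma_{\varphi_\infty}$, and by construction its negative end $e_-$ (uniformizing the mirror of $x_0$) is geometrically finite.

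The heart of the argument is to identify the positive end $e_+$ of $N_{\varphi_\infty}$. Write $y_n=\TRay{x;[\lambda]}(t_n)$ and let $\alpha_n\in\mathcal S$ be a systole of $y_n$, so that $\ell_{y_n}(\alpha_n)$ is bounded by a universal constant $C(g,m)$. Because $y_n$ diverges in $\teich_{g,m}$, the classes $[\alpha_n]$ escape to the boundary, and by the asymptotic behaviour of Teichm\"uller rays (Masur), using that $\lambda$ is the contracting direction of the ray so that $\ext_\lambda(\TRay{x;[\lambda]}(t))\to 0$ as $t\to\infty$ (cf.\ Gardiner's formula \eqref{eq:Gardiner-formula}), any projective limit $[\mu]$ of $[\alpha_n]$ satisfies $i(\mu,\lambda)=0$; minimality and filling of $\lambda$ then force $|\mu|=|\lambda|$. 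Applying Bers' length inequality to the quasi-Fuchsian manifolds $N_{\mathfrak{B}_{x_0}(y_n)}$ gives $\ell_{N_{\mathfrak{B}_{x_0}(y_n)}}(\alpha_n^*)\le 2C(g,m)$, so in the algebraic (and geometric) limit $N_{\varphi_\infty}$ the geodesics $\alpha_n^*$ have uniformly bounded length and, being short on the degenerating $y_n$-boundary, exit the positive end $e_+$. Hence $e_+$ is geometrically infinite, and by the characterization of the ending lamination recalled above its ending lamination is the common support $|\lambda|$. Since $\lambda$ is filling, no essential curve is disjoint from $|\lambda|$, which rules out accidental parabolics; thus $\varphi_\infty$ is a totally degenerate group without accidental parabolics with ending lamination $|\lambda|$.

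Finally, the Thurston double limit theorem guarantees that the point $\Xi_{x_0}([\lambda])\in\partial\Bers{x_0}$ with ending lamination $|\lambda|$ exists, and the Ending Lamination Theorem guarantees that such a point is unique. Therefore every accumulation point $\varphi_\infty$ coincides with $\Xi_{x_0}([\lambda])$, and hence $\mathfrak{B}_{x_0}\circ\TRay{x;[\lambda]}$ converges to $\Xi_{x_0}([\lambda])$ as $t\to\infty$, as claimed. The main obstacle is the third step: converting the conformal degeneration along the Teichm\"uller ray into the geometric degeneration of the associated quasi-Fuchsian manifolds and certifying that the bounded-length geodesics $\alpha_n^*$ genuinely exit the positive end of the algebraic limit. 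This requires care with Bers' inequality and with the interplay of algebraic and geometric convergence of the groups $\Gamma_{\mathfrak{B}_{x_0}(y_n)}$, and, since $\lambda$ is only assumed minimal and filling rather than uniquely ergodic, one must track the support $|\lambda|$ throughout rather than a single projective class of the approximating curves.
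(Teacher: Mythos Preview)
The paper does not prove \Cref{prop:limit_T-ray} in the text; it simply refers the reader to \cite[Proposition~5.1]{MR4633651}. Your proposal follows exactly the standard strategy used there: compactness of $\overline{\Bers{x_0}}$ forces subsequential limits on $\partial\Bers{x_0}$; short curves on $y_n=\TRay{x;[\lambda]}(t_n)$ converge projectively to laminations supported on $|\lambda|$ because $\ext_{y_n}(\lambda)=e^{-2t_n}\ext_x(\lambda)\to 0$ (this is consistent with the paper's convention, as in the proof of \Cref{prop:conical_horospherical}); Bers' inequality transports these to bounded-length geodesics in the approximating quasi-Fuchsian manifolds, which in the limit exit $e_+$ and identify its ending lamination as $|\lambda|$; and the Ending Lamination Theorem then pins down the limit uniquely as $\Xi_{x_0}([\lambda])$.

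You have also correctly isolated the one genuinely nontrivial step, namely certifying that the geodesics $\alpha_n^*$ actually exit $e_+$ in the limit manifold, which requires controlling the interplay between algebraic and geometric convergence of $\Gamma_{\mathfrak{B}_{x_0}(y_n)}$. One small point of ordering: your exclusion of accidental parabolics (``since $\lambda$ is filling, no essential curve is disjoint from $|\lambda|$'') is stated after you have already named $|\lambda|$ as the ending lamination of $e_+$, but strictly speaking the notion of a single ending lamination on all of $\Sigma_{g,m}$ presupposes that there are no accidental parabolics. In practice this is handled simultaneously with the identification step: an accidental parabolic $\gamma$ would have to satisfy $i(\gamma,\mu)=0$ for the limiting lamination $\mu$ of the short curves, which is impossible once $|\mu|=|\lambda|$ is filling. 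With that minor reorganization your outline is complete and matches the intended proof.
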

For a proof, see \cite[Proposition 5.1]{MR4633651}, for example.

The following two theorems are due to Masur:

\begin{theorem}[Masur {\cite[Theorem 2]{MR576605}}]
\label{thm:masur_geodesic}
Let $[\lambda] \in \pml^{\mathrm{ue}}_{g,m}$ and $x \in \teich_{g,m}$. Then the Teichm\"uller geodesic ray $\TRay{x;[\lambda]}$ converges to $[\lambda] \in \partial_{Th} \teich_{g,m} \cong \pml_{g,m}$ in the Thurston compactification.  
Moreover, for any $x_1, x_2 \in \teich_{g,m}$, the Teichm\"uller geodesic rays $\TRay{x_1;[\lambda]}$ and $\TRay{x_2;[\lambda]}$ are asymptotic at infinity, in the sense that
$$
\lim_{t \to \infty} d_T\big( \TRay{x_1;[\lambda]}(t),\ \TRay{x_2;[\lambda]}((0,\infty)) \big) = \lim_{t \to \infty} d_T\big( \TRay{x_2;[\lambda]}(t),\ \TRay{x_1;[\lambda]}((0,\infty)) \big) = 0.
$$
\end{theorem}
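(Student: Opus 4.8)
Write $x_{t}=\TRay{x;[\lambda]}(t)$, so that $d_T(x,x_t)=t$, and let $q=q_{\lambda,x}$ be the Hubbard--Masur differential of $\lambda$ at $x$, normalized by $\|q\|=1$; thus $v(q)=\lambda$, and write $\lambda^{+}=h(q)$ for its horizontal foliation. In a natural coordinate $w=\xi+\imaginaryunit\eta$ with $q=dw^{2}$, the Teichm\"uller map producing $x_t$ is the area-preserving affine stretch $\xi\mapsto e^{t}\xi$, $\eta\mapsto e^{-t}\eta$; the pushed-forward flat structure on $x_t$ therefore has unit area and vertical foliation $e^{t}\lambda$, so its Hubbard--Masur differential for $\lambda$ is $e^{-2t}q_t$ and
\[
\ext_{x_t}(\lambda)=e^{-2t}\ext_{x}(\lambda)=e^{-2t}.
\]
(The same sign is recorded by Gardiner's formula \eqref{eq:Gardiner-formula}, which shows $\ext_\lambda$ is \emph{decreasing} along $\TRay{x;[\lambda]}$.) Since $d_T(x,x_{t_n})=t_n\to\infty$ forbids any interior accumulation point, \Cref{thm:thurston_compactification} shows that every accumulation point of the ray lies on $\pml_{g,m}$; it therefore suffices to prove that each such point equals $[\lambda]$.

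\textbf{Identifying the limit via unique ergodicity.} Let $[\nu]\in\pml_{g,m}$ be an accumulation point, realized along $t_n\to\infty$. The basic estimate is Minsky's inequality $\ext_{x_t}(\alpha)\,\ext_{x_t}(\lambda)\ge i(\alpha,\lambda)^{2}$, which with the previous computation gives the clean lower bound $\ext_{x_t}(\alpha)\ge e^{2t}\,i(\alpha,\lambda)^{2}$ for every $\alpha\in\mathcal S$; in the thick part of moduli space this is complemented by the matching flat upper bound $\ext_{x_t}(\alpha)\le C\big(e^{t}i(\lambda,\alpha)+e^{-t}i(\lambda^{+},\alpha)\big)^{2}$, so that $e^{-t}\sqrt{\ext_{x_t}(\alpha)}\to i(\lambda,\alpha)$ there. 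The decisive point is that $\ext_{x_t}(\lambda)=e^{-2t}\to 0$: the foliation $\lambda$ is the one being pinched, and I would use this to show that the limit $\nu$ cannot cross $\lambda$, that is, $i(\nu,\lambda)=0$. Since $\lambda$ is uniquely ergodic, $i(\nu,\lambda)=0$ with $\nu\ne 0$ forces $[\nu]=[\lambda]$ (by the definition of unique ergodicity in \S\ref{subsub:uel}), and the ray converges. The main obstacle is exactly the step $i(\nu,\lambda)=0$: it requires comparing the hyperbolic lengths defining $[\nu]$ with the extremal (or flat) lengths appearing above, and the easy bound $\ell\le C\sqrt{\ext}$ degenerates in the thin part. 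This is resolved by Masur's recurrence theorem---for a uniquely ergodic vertical foliation the geodesic returns to a fixed compact part of moduli space along some $t_n\to\infty$, where all three notions of length are uniformly comparable---or, equivalently, by invoking the coincidence of the extremal-length and hyperbolic-length Thurston compactifications.

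\textbf{Asymptoticity of the rays.} For the final assertion, the rays $\TRay{x_1;[\lambda]}$ and $\TRay{x_2;[\lambda]}$ are generated by differentials $q_{\lambda,x_1}$, $q_{\lambda,x_2}$ with the \emph{same} vertical foliation class $[\lambda]$, and by the first part both converge to $[\lambda]$ in $\pml_{g,m}$. To promote this to the asymptotic estimate I would compare the two flat structures far out along the rays: their vertical foliations are, up to scale, the single lamination $\lambda$, and their vertical extremal lengths decay at the common rate $e^{-2t}$. Unique ergodicity is indispensable here because it pins the transverse measure on $\lambda$ down to one scalar, so the horizontal data of the two geodesics cannot drift; after matching the scale one produces a constant $c$ with $d_T\big(\TRay{x_1;[\lambda]}(t),\TRay{x_2;[\lambda]}(t+c)\big)\to 0$, whence the two rays are asymptotic. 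The crux is again the control of the horizontal foliations along the degenerating surfaces, and it is precisely unique ergodicity that rules out the non-asymptotic behaviour possible for a merely topologically determined vertical foliation.
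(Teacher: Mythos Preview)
The paper does not prove this theorem: it is stated as a citation to Masur \cite[Theorem 2]{MR576605} and used as a black box, so there is no ``paper's own proof'' to compare against. I can therefore only assess your sketch on its own merits.

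Your outline for convergence is broadly along the right lines, and the identity $\ext_{x_t}(\lambda)=e^{-2t}$ together with Minsky's inequality is the natural starting point. However, the argument is circular as written. You invoke ``Masur's recurrence theorem'' to control the thin part---but the statement that a Teichm\"uller ray with uniquely ergodic vertical foliation is recurrent (equivalently, the contrapositive of \Cref{thm:divergence_non-ue}) is Masur's 1992 result, proved a decade \emph{after} the theorem you are trying to establish, and its proof relies on machinery at least as deep as the present statement. If you want to use recurrence as an input, you must cite it explicitly as an independent theorem rather than present it as a step in the proof; and even then, the passage from ``lengths are comparable on the thick part'' to ``$i(\nu,\lambda)=0$'' (or directly to $[\nu]=[\lambda]$) needs to be written out, since your flat upper bound $\ext_{x_t}(\alpha)\le C(e^{t}i(\lambda,\alpha)+e^{-t}i(\lambda^{+},\alpha))^{2}$ already shows $e^{-t}\sqrt{\ext_{x_t}(\alpha)}\to i(\lambda,\alpha)$ on the thick part, which combined with Maskit's $\ell\asymp\sqrt{\ext}$ gives $[\nu]=[\lambda]$ without ever passing through the intermediate claim $i(\nu,\lambda)=0$. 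Your narrative conflates these two routes.

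The asymptoticity paragraph is not a proof. Phrases like ``after matching the scale one produces a constant $c$'' and ``unique ergodicity pins the transverse measure down'' gesture at the mechanism but do none of the work. Masur's actual argument constructs, for large $t$, an explicit quasiconformal map between the flat structures on $\TRay{x_1;[\lambda]}(t)$ and $\TRay{x_2;[\lambda]}(t+c)$ whose dilatation tends to $1$; this requires a careful analysis of how the horizontal foliations of $q_{\lambda,x_1}$ and $q_{\lambda,x_2}$ compare (they are \emph{not} the same measured foliation, even projectively), and unique ergodicity enters through an equidistribution/averaging statement for the vertical flow, not merely through the uniqueness of the transverse measure on $|\lambda|$. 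Your sketch does not engage with the horizontal data at all, and without that the conclusion does not follow.
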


\begin{theorem}[Masur {\cite[Theorem 1.1]{MR1167101}}]
\label{thm:divergence_non-ue}
If the direction of a Teichm\"uller geodesic ray is not uniquely ergodic, then its projection
to the moduli space eventually leaves every compact subset.
\end{theorem}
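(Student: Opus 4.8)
The plan is to recognize the assertion as the contrapositive of Masur's recurrence criterion and to translate divergence in the moduli space into the collapse of the flat systole of the terminal differentials. Write $q_0=q_{\lambda,x}$ for the Hubbard--Masur differential, so that the vertical foliation of $q_0$ is $\lambda$ and $\TRay{x;[\lambda]}=\TRay{q_0}$. First I would record the purely logical reformulation: the projected ray \emph{eventually leaves every compact subset} of $\mathcal{M}_{g,m}$ precisely when it is \emph{not recurrent to any compact set}, i.e.\ for every compact $K\subset\mathcal{M}_{g,m}$ there is no sequence $t_n\to\infty$ with $\TRay{x;[\lambda]}(t_n)\in K$. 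By Mumford's compactness criterion a subset of $\mathcal{M}_{g,m}$ is relatively compact if and only if the shortest closed geodesic has length bounded below, and for short curves hyperbolic length and extremal length tend to $0$ together; hence it is equivalent to show that $\inf_{\alpha\in\mathcal{S}}\ext_{\TRay{x;[\lambda]}(t)}(\alpha)\to 0$ as $t\to\infty$. Thus everything reduces to the contrapositive statement (Masur's criterion): if the projected ray returns to a fixed compact set along some sequence $t_n\to\infty$, then $\lambda$ is uniquely ergodic in the sense of \S\ref{subsub:uel}.

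To see the geometric mechanism, let $q_t$ be the unit-norm terminal quadratic differential at time $t$ and let $h_0$ be the horizontal foliation of $q_0$. Since the Teichm\"uller flow expands the horizontal direction by $e^{t}$ and contracts the vertical one by $e^{-t}$, for a simple closed curve $\gamma$ the $q_t$-flat length obeys
\[
\ell_{q_t}(\gamma)^2\ \asymp\ e^{2t}\,i(\lambda,\gamma)^2+e^{-2t}\,i(h_0,\gamma)^2 .
\]
For large $t$ this is small only when $i(\lambda,\gamma)\lesssim e^{-t}$, i.e.\ only for curves nearly parallel to the vertical foliation $\lambda$; and by the flat thick--thin comparison a simple closed geodesic of small $q_t$-length forces $\TRay{x;[\lambda]}(t)$ into the thin part of $\mathcal{M}_{g,m}$, hence out of every compact set. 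The role of unique ergodicity is precisely to control such nearly vertical curves: if $\lambda$ is not uniquely ergodic there is $\nu\in\ml_{g,m}\setminus\{0\}$ with $i(\lambda,\nu)=0$ and $[\nu]\ne[\lambda]$, and since $\lambda$ and $h_0$ jointly fill we have $i(h_0,\nu)>0$, so $\nu$ is a second, genuinely transverse ``vertical'' direction along which one expects curves with $i(\lambda,\gamma)\to 0$ but $i(h_0,\gamma)$ of definite size, and such curves become $q_t$-short at the balancing times $t\approx\tfrac12\log\!\big(i(h_0,\gamma)/i(\lambda,\gamma)\big)$.

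The main obstacle lies precisely in making this last step rigorous, and it is the genuine content of Masur's theorem. Mere density of weighted simple closed curves does not suffice: approximating $\nu$ by $c_k\gamma_k\to\nu$ forces the weights $c_k\to 0$ as $\gamma_k$ becomes combinatorially complicated, and then the product $i(\lambda,\gamma_k)\,i(h_0,\gamma_k)$ governing $q_t$-shortness is not controlled. The correct argument instead runs through the recurrence hypothesis: when $\TRay{x;[\lambda]}(t_n)\in K$ for a compact $K$, Mumford's criterion gives a uniform lower bound on the $q_{t_n}$-systole, so the flat structures $q_{t_n}$ have uniformly bounded geometry along $t_n\to\infty$. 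The heart of the proof is then the ergodic-theoretic dichotomy that bounded geometry along such a sequence forces any two transverse invariant measures on the vertical foliation to be proportional: a non-proportional pair yields, via the difference of the associated relative period cocycles (equivalently, via Rauzy--Veech induction on the underlying interval exchange), curves whose horizontal variation decays geometrically while their vertical variation stays of order one, and the balancing computation above then produces $q_{t_n}$-short curves for suitable $n$, contradicting the systole bound. Establishing this dichotomy---i.e.\ Masur's criterion as in \cite{MR1167101}---is the crux; the remaining steps are the bookkeeping relating extremal, flat and hyperbolic lengths together with Mumford compactness.
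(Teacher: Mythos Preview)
The paper does not prove this statement at all: \Cref{thm:divergence_non-ue} is stated as a background result, attributed to Masur and cited directly from \cite{MR1167101}, with no argument given. So there is no ``paper's own proof'' to compare your proposal against.

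As for your proposal itself, it is not a self-contained proof either, and you say as much. You correctly identify the contrapositive formulation (recurrence to a compact set implies unique ergodicity), correctly invoke Mumford compactness to translate recurrence into a uniform flat-systole lower bound, and give a reasonable heuristic for why non-unique-ergodicity should produce short curves along the ray. But you then explicitly acknowledge that the ``heart of the proof''---the ergodic-theoretic dichotomy showing that bounded geometry along a subsequence forces any two transverse invariant measures to be proportional---is precisely the content of \cite{MR1167101}, and you do not supply it. In other words, your proposal is an annotated citation: it explains what Masur's criterion says and why it is plausible, but the actual work is still deferred to Masur. That is the same stance the paper takes, so for the purposes of this survey your write-up is adequate as an expository gloss, but it should not be labeled a proof.
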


\subsection{Action of the mapping class group on the Bers slice}

As discussed in \S\ref{subsec:Action_MCG} and \S\ref{subsec:Action_MCG_on_ML}, the mapping class group $\mcg(\Sigma_{g,m})$ acts on both the Teichm\"uller space and the space of (projective) measured laminations. In \S\ref{subsubsec:action_mcg_Tcomp}, we saw that these two actions are compatible from the viewpoint of the Thurston compactification.

The case of the Bers slice is more delicate. For $x_1, x_2 \in \teich_{g,m}$, there exists a biholomorphism given by the change of basepoints:
\begin{equation}
\label{eq:two_Bers_slices}
\mathfrak{B}_{x_2} \circ \mathfrak{B}_{x_1}^{-1} \colon \Bers{x_1} \to \Bers{x_2}.
\end{equation}
The map \eqref{eq:two_Bers_slices} is referred to as the \emph{geometric isomorphism} in \cite{MR1322950}.

Kerckhoff and Thurston \cite{MR1037141} provided examples in the case $(g,m) = (2,0)$ showing that the biholomorphism \eqref{eq:two_Bers_slices} does not extend continuously to the Bers compactifications, and that the action of the mapping class group similarly fails to extend continuously.

Ohshika \cite{MR3330544} introduced the concept of the \emph{reduced Bers slice}\index{reduced Bers slice} as the quotient of the Bers slice $\Bers{x_0}$ by the equivalence relation of quasiconformal conjugacy on the Bers boundary. He showed that both geometric isomorphisms and the mapping class group action extend homeomorphically to the reduced Bers slice.

This result implies the following: if a boundary point $\varphi \in \partial \Bers{x_0}$ is \emph{quasiconformally rigid}\index{quasiconformal rigid}\index{Kleinian group!quasiconformal rigid}—in the sense that every quasiconformal self-conjugacy of $\chi_\varphi$ that is conformal on the invariant component is induced by a M\"obius transformation—then the action of any mapping class on $\Bers{x_0}$ extends continuously at $\varphi$. According to Sullivan's rigidity theorem \cite{MR624833}, this quasiconformal rigidity is equivalent to the condition that the stabilizer subgroup of each component of the region of discontinuity of $\Gamma_\varphi$, except for the invariant component, is a triangle group.

In particular, the map \eqref{eq:map_PMLmf_to_BB} commutes with the action of the mapping class group:
\begin{equation}
\label{eq:map_PMLmf_to_BB_MCG}
\Xi_{x_0} \circ [\omega] = [\omega] \circ \Xi_{x_0}
\end{equation}
for $[\omega] \in \operatorname{MCG}(\Sigma_{g,m})$. This commutativity describes an equivariant correspondence between directions in $\pml_{g,m}$ and points on the Bers boundary, as determined by the limiting behavior discussed in \Cref{prop:limit_T-ray}.

While Bers \cite{MR624803} had already observed the continuous extendability on the set of quasiconformally rigid groups, Ohshika's breakthrough result provides a complete and conceptually satisfying resolution to the long-standing problem of extending basepoint changes and the mapping class group action to the Bers compactification.

\section{Toy model: Teichm\"uller space of tori}
\label{sec:Toy}
In this section, we discuss the Teichm\"uller space of tori. Although this space is simply the hyperbolic plane, its properties raise motivating questions for the study of general Teichm\"uller spaces.

\subsection{Teichm\"uller space of tori}
Let $\Sigma_1$ be a (topological) torus.\index{torus}
Fix a symplectic basis $\{A, B\}$ of $H_1(\Sigma_1)$ such that the intersection number satisfies $A \cdot B = +1$ (cf. Figure~\ref{fig:Torus1}).

\begin{figure}
\includegraphics[width=5cm, bb = -1 0 384 221]{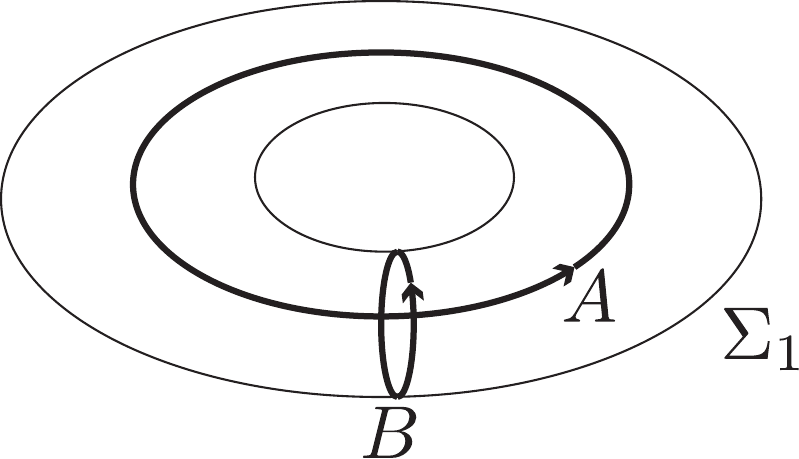}
\caption{Torus and symplectic basis}
\label{fig:Torus1}
\end{figure}

Let $\tau \in \mathbb{H}$, and let $M_\tau$ be the one-dimensional complex torus defined by the lattice $\mathbb{Z} \oplus \tau \mathbb{Z} = \{m + n\tau \in \mathbb{C} \mid m,n \in \mathbb{Z}\}$ in $\mathbb{C}$.
Let $h_\tau\colon \Sigma_1 \to M_\tau$ be an orientation-preserving homeomorphism that sends $A$ and $B$ to the cycles defined by $1$ and $\tau$ in $\mathbb{Z} \oplus \tau \mathbb{Z} \cong \pi_1(M_\tau)$, respectively.
Then, $(M_\tau, h_\tau)$ defines a marked torus.

Let $\teich_1$ denote the Teichm\"uller space of tori.\index{Teichm\"uller space of tori}
The \emph{period map}\index{period map}\index{Teichm\"uller space of tori!period map} is a function from $\teich_1$ to $\mathbb{H}$ defined as follows:
For $x = (M, f) \in \teich_1$, let $\omega_x$ be the holomorphic one-form on $M$ whose $f(A)$-period is normalized to one. Since $f$ is orientation-preserving, the imaginary part of the $f(B)$-period of $\omega_x$ is positive:
\[
\tau(x) = \int_{f(B)} \omega_x.
\]
Then the two marked tori $x = (M, f)$ and $(M_{\tau(x)}, h_{\tau(x)})$ are Teichm\"uller equivalent, and the period map $\teich_1 \ni x \mapsto \tau(x) \in \mathbb{H}$ is bijective.
Hence, this correspondence induces a complex structure on $\teich_1$, which coincides with the one arising from the quasiconformal deformation theory discussed in \S\ref{subsec:complex_structure_holomorphicinfinitesimal_spaces}.

The mapping class group $\operatorname{MCG}(\Sigma_1)$ is isomorphic to $\slm_2(\mathbb{Z})$. The action \eqref{eq:mcg_action_T} of $\operatorname{MCG}(\Sigma_1)$ induces a homomorphism
\[
\operatorname{MCG}(\Sigma_1) \cong \slm_2(\mathbb{Z}) \to \operatorname{Aut}(\mathbb{H}),
\]
whose image coincides with the modular group $\psl_2(\mathbb{Z})$. Therefore, the moduli space of tori is the modular surface $\mathcal{M}_1 = \mathbb{H} / \psl_2(\mathbb{Z})$.

The Teichm\"uller distance $d_T$ on $\teich_1$ coincides with the Kobayashi distance on $\mathbb{H}$, which in turn coincides with the Poincar\'e distance of curvature $-4$.
The Teichm\"uller geodesic rays are precisely the hyperbolic geodesic rays.

\subsection{Spaces in Thurston theory for tori}

By the Gauss--Bonnet theorem, any complex torus admits no hyperbolic structure.  
To define analogues of the spaces discussed in \S\ref{sec:Back_Thurston_theory} for $\Sigma_1$,  
we adopt the topological formulation presented in \S\ref{subsub:topological_ML}.  

To avoid confusion, we refer to the corresponding spaces as the \emph{space of measured foliations}, denoted by $\mathcal{MF}$, and the \emph{space of projective measured foliations}, denoted by $\mathcal{PMF}$,  
rather than as the space of measured geodesic laminations and projective measured geodesic laminations, respectively.

\subsubsection{Space of measured foliations}

The set $\mathcal{S} = \mathcal{S}(\Sigma_1)$ of homotopy classes of simple closed curves on $\Sigma_1$ is parametrized by the extended rational numbers $\hat{\mathbb{Q}} = \mathbb{Q} \cup \{\infty\}$ as follows.
We identify $\Sigma_1$ with the marked torus $M_{\imaginaryunit}$ for $\tau = \imaginaryunit$.  
Via the identification $\pi_1(\Sigma_1) \cong H_1(\Sigma_1) \cong \mathbb{Z} \oplus \imaginaryunit \mathbb{Z}$, any unoriented simple closed curve is represented by $C_{p/q} = -pA + qB$,  
where $p$ and $q$ are relatively prime integers with $q \ge 0$, and $q = 0$ is allowed only when $p = 1$.  
Then the map
\[
\mathcal{S} \ni -pA + qB \mapsto p/q \in \hat{\mathbb{Q}}
\]
is a bijection, where we define $1/0 = \infty$. We call the homotopy class of the unoriented curve represented by $C_{p/q}$ the \emph{$p/q$-curve}\index{$p/q$-curve}\index{torus!$p/q$-curve} on $\Sigma_1$.

The geometric intersection number between $C_{p/q}$ and $C_{r/s}$ is given by $i(C_{p/q}, C_{r/s}) = |ps - rq|$.

The $\pi$-rotation of $\mathbb{R}^2$ about the origin defines a $\mathbb{Z}_2 = \mathbb{Z}/2\mathbb{Z}$-action. We denote by $[a,b] \in \mathbb{R}^2/\mathbb{Z}_2$ the equivalence class of $(a,b) \in \mathbb{R}^2$.

The set of weighted simple closed curves $\mathcal{WS}$ on $\Sigma_1$ is identified with the subset of $\mathbb{R}^2/\mathbb{Z}_2$ given by
\[
\{[x,y] \in \mathbb{R}^2/\mathbb{Z}_2 \mid x/y \in \hat{\mathbb{Q}}\}.
\]
The map
\begin{equation}
\label{eq:MF_torus}
\mathcal{WS} \ni [x,y] \mapsto \left[ C_{p/q} \in \mathcal{S} \mapsto |py + xq| \right] \in \mathbb{R}_{\ge 0}^{\mathcal{S}}
\end{equation}
is injective, and the closure of its image coincides with the image of the canonical extension of the map to $\mathbb{R}^2/\mathbb{Z}_2$.

Therefore, the domain $\mathcal{MF} = \mathbb{R}^2/\mathbb{Z}_2$ is regarded as the \emph{space of measured foliations}\index{space of measured foliation}\index{torus!space of measured foliation} on $\Sigma_1$, which corresponds to the space of measured geodesic laminations in this case.
The map
\begin{equation}
\label{eq:PMF_torus}
\mathbb{R}^2/\mathbb{Z}_2 \ni [x,y] \mapsto -x/y \in \mathbb{R} \cup \{\infty\}
\end{equation}
descends to a homeomorphism $\mathcal{PMF} = (\mathcal{MF} \setminus \{0\})/\mathbb{R}_+ \to \mathbb{S}^1 = \mathbb{R} \cup \{\infty\}$. We call the space $\mathcal{PMF}$ the \emph{space of projective measured foliations}\index{space of projective measured foliation}\index{torus!space of projective measured foliation} on $\Sigma_1$.

\subsection{Thurston's symplectic form and Thurston's measure}

The space $\mathcal{MF}$ admits train track charts as in Figure~\ref{fig:Torus2}.
\begin{figure}
\centering
\includegraphics[width = 10cm, bb = 0 0 641 393]{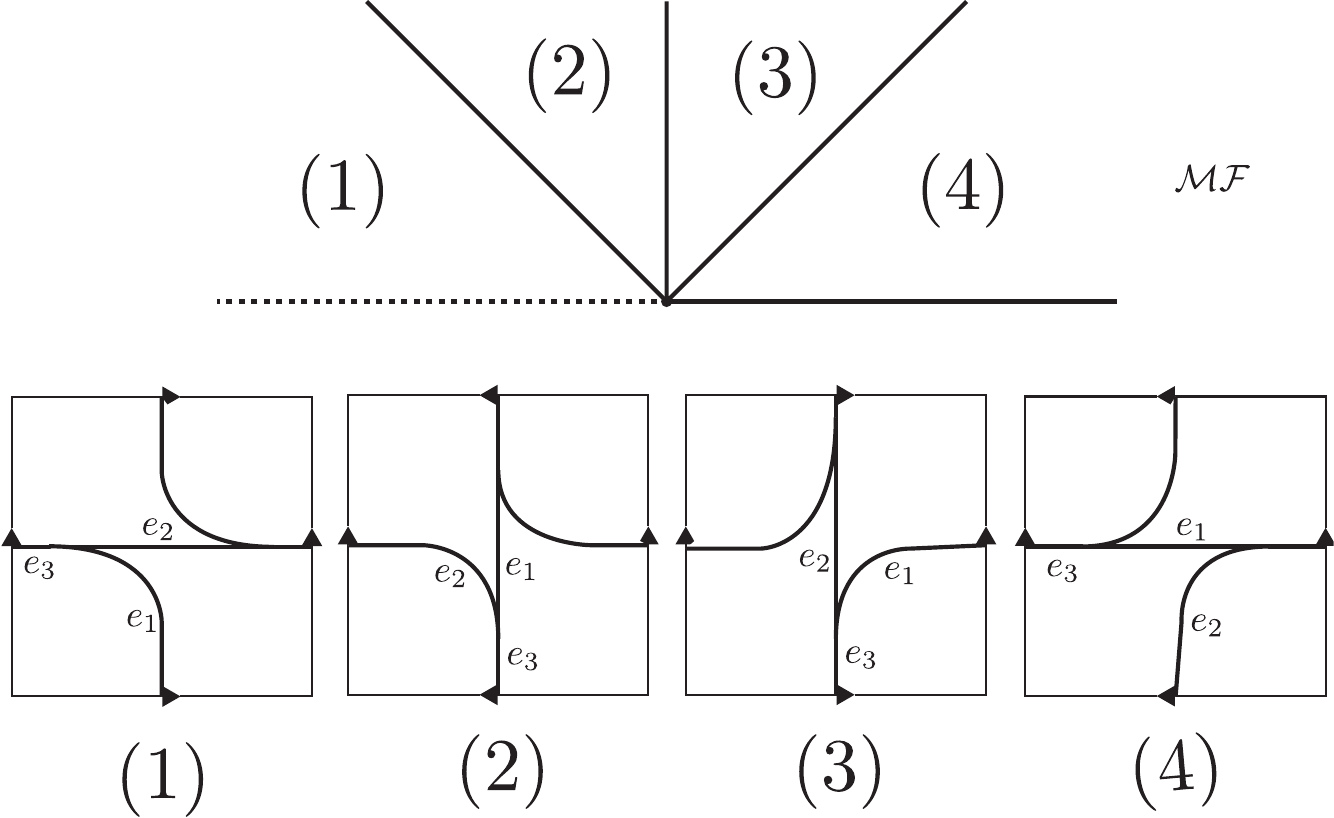}
\caption{Train track charts for $\mathcal{MF}$}
\label{fig:Torus2}
\end{figure}
The four squares in Figure~\ref{fig:Torus2} represent tori, obtained by first identifying the top and bottom edges of each square to form cylinders, and then identifying the left and right edges of each cylinder to obtain a torus. Each resulting torus contains a train track induced by the edge identifications.

However, it should be noted that these train tracks are not genuine in the strict sense, as their complements contain bigons, which are prohibited in the classical definition. A common convention is to consider once-punctured tori by removing the point corresponding to the corner, thereby turning the bigons into once-punctured bigons. Under this convention, these issues do not interfere with formal calculations (cf.~\cite[\S9.5]{Thuston-LectureNote}).

Let $\tau$ be one such train track. In each case, the measured foliation corresponding to a weight $\mu \in V(\tau)$ is given as follows:
\begin{itemize}
 \item[(1)] $V(\tau) \ni \mu \mapsto [x,y] = [-(\mu(e_1)+\mu(e_2)), \mu(e_1)] \in \mathcal{MF}$;
 \item[(2)] $V(\tau) \ni \mu \mapsto [x,y] = [-\mu(e_2), \mu(e_1)+\mu(e_2)] \in \mathcal{MF}$;
 \item[(3)] $V(\tau) \ni \mu \mapsto [x,y] = [\mu(e_1), \mu(e_1)+\mu(e_2)] \in \mathcal{MF}$;
 \item[(4)] $V(\tau) \ni \mu \mapsto [x,y] = [\mu(e_1)+\mu(e_2), \mu(e_2)] \in \mathcal{MF}$.
\end{itemize}
For instance, in Case (1), the chart is defined by
\[
x = -\mu(e_1) - \mu(e_2), \quad y = \mu(e_1),
\]
for $[x, y] \in \mathcal{MF}$. In this chart, we have $\mu(e_1) = y$ and $\mu(e_2) = -x - y$. Therefore, Thurston's symplectic form $\omega_{\mathrm{Th}}$ is given by
\begin{align*}
\omega_{\mathrm{Th}}(u_1, u_2)
&= \frac{1}{2}\big(u_1(e_1) u_2(e_2) - u_1(e_2) u_2(e_1)\big) \\
&\quad + \frac{1}{2}\big(u_1(e_1) u_2(e_2) - u_1(e_2) u_2(e_1)\big) \\
&= u_1(e_1) u_2(e_2) - u_1(e_2) u_2(e_1) \\
&= \frac{1}{2}dy \wedge (-dx - dy)(u_1, u_2) = \frac{1}{2}dx \wedge dy(u_1, u_2),
\end{align*}
for $u_1, u_2 \in W(\tau)$, after identifying $W(\tau)$ with the tangent cone to the PL-manifold structure on $\mathcal{MF}$. By performing the same computation in the other charts, we verify that the Thurston symplectic form coincides with one half of the standard Euclidean symplectic form, that is, $\omega_{\mathrm{Th}} = \frac{1}{2}dx \wedge dy$ on these charts.\footnote{The factor $1/2$ depends on the convention of the exterior derivative.}

These charts do not cover the diagonal, horizontal, and vertical rays in $\mathcal{MF}$. To cover these rays, we fix one of the train tracks $\tau \subset \Sigma_1$ and consider the action of a mapping class $[\omega]$ on $\Sigma_1$ such that the interior of $V(\omega(\tau))$ contains one of these rays. The identification described above is compatible with the action of the mapping class group. Mapping class group actions are represented by elements of $\slm_2(\mathbb{Z})$ acting linearly on $\mathcal{MF}$, which preserve the symplectic form described above.

As a conclusion, the Thurston symplectic form $\omega_{\mathrm{Th}}$ on $\mathcal{MF} = \mathbb{R}^2 / \mathbb{Z}_2$ coincides with one half of the Euclidean symplectic form, and the Thurston volume form is equal to one half of the Euclidean volume form.\footnote{As discussed above, this is a formal calculation mimicking the case of once-punctured tori. Hence, we assume $3g - 3 + m = 1$ in this case.}

\subsection{Extremal length}

Let $\tau \in \mathbb{H} \cong \teich_1$.
For $\lambda = [x, y] \in \mathcal{MF} \setminus \{0\} = \mathbb{R}^2 / \mathbb{Z}_2 \setminus \{0\}$, we define a Jenkins--Strebel differential $q_{\lambda,\tau}$ on $M_\tau = \mathbb{C} / (\mathbb{Z} \oplus \tau \mathbb{Z})$ by
\begin{equation}
\label{eq:JS_torus}
q_{\lambda,\tau} = -\left( \frac{x + y\overline{\tau}}{\operatorname{Im}(\tau)} \right)^2 dz^2.
\end{equation}
The infimum in the integral on the right-hand side of \eqref{eq:vertical_foliation} with respect to the $p/q$-curve $C_{p/q} = [-p, q] \in \mathcal{MF}$ is attained by the projection to $M_\tau$ of the Euclidean straight line $\tilde{C}_{p/q} \subset \mathbb{C}$ connecting the origin to $-p + q\tau$. Then,
\begin{align*}
\inf_{\alpha' \in C_{p/q}} \int_{\alpha'} \left| \operatorname{Re} \left( \sqrt{q_{\lambda,\tau}} \right) \right|
&= \int_{\tilde{C}_{p/q}} \left| \operatorname{Re} \left( i \left( \frac{x + y\overline{\tau}}{\operatorname{Im}(\tau)} \right) dz \right) \right| \\
&= \int_{\tilde{C}_{p/q}} \left| \operatorname{Im} \left( \left( \frac{x + y\overline{\tau}}{\operatorname{Im}(\tau)} \right) dz \right) \right| \\
&= |py + qx| = \lambda(C_{p/q}),
\end{align*}
for $C_{p/q} = [-p, q] \in \mathcal{MF}$ (cf.~\eqref{eq:MF_torus}). Therefore, $q_{\lambda,\tau}$ is the Hubbard--Masur differential for $\lambda$ on $M_\tau$.

As a result, the extremal length of $\lambda = [x, y] \in \mathcal{MF}$ on $M_\tau$ is given by
\begin{equation}
\label{eq:extremal_length_torus}
\ext_\tau(\lambda) = \| q_{\lambda,\tau} \| = \frac{|x + y\tau|^2}{\operatorname{Im}(\tau)}.
\end{equation}
Observe that as $\ext_{\tau}([x,y]) \to 0$ with varying $\tau \in \mathbb{H}$, the parameter $\tau$ tends to $-x/y \in \hat{\mathbb{R}}$. This behavior justifies the choice of sign in \eqref{eq:PMF_torus} and confirms the compatibility of this parametrization with Thurston's compactification of $\teich_1$.
%
%
\section{Function theory on Teichm\"uller spaces}
\label{sec:Function_theory_T}
In this section, we consider bounded (pluri)harmonic and holomorphic functions on the Teichm\"uller space.
We begin by recalling the case of the upper half-plane or the unit disk, which corresponds to the Teichm\"uller space of tori.
Then, we discuss recent progress in the author's research on function theory.

\subsection{Prototype example: Function theory on $\teich_1$}
The function theory on $\teich_1$ is equivalent to that on the upper half-plane $\mathbb{H}$, or, equivalently, on the unit disk $\mathbb{D}$.

\subsubsection{Function theory on $\mathbb{H}$}
We begin with the classical Poisson integral formula due to Fatou~\cite{MR1555035}.

\begin{theorem}[Fatou]
\label{thm:Fatou1}
Let $\mathbb{D}$ denote the unit disk\index{unit disk} in $\mathbb{C}$.
Any bounded harmonic function on $\mathbb{D}$ admits a non-tangential limit $u^*$ on $\partial \mathbb{D}$, and satisfies the Poisson integral formula
\begin{equation}
\label{eq:PIF}
u(z) = \int_0^{2\pi} u^*(e^{i\theta}) \frac{1 - |z|^2}{|z - e^{i\theta}|^2} \frac{d\theta}{2\pi},
\end{equation}
for all $z \in \mathbb{D}$.
\end{theorem}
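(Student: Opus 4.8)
The plan is to separate the statement into two assertions---the integral representation and the existence of non-tangential boundary values---and to obtain the representation first by a soft compactness argument, then to extract the non-tangential limits from the approximate-identity structure of the Poisson kernel.

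First I would use that $u$, being harmonic, is smooth on all of $\mathbb{D}$, hence harmonic up to the boundary on each closed subdisk $\{|z|\le r\}$ with $r<1$. The classical Poisson representation on a disk (a consequence of the mean value property together with the conformal automorphisms of $\mathbb{D}$, or of Green's identity) then gives, for every $|z|<r<1$,
\begin{equation*}
u(z)=\int_0^{2\pi}u(re^{i\theta})\,\frac{r^2-|z|^2}{|re^{i\theta}-z|^2}\,\frac{d\theta}{2\pi}.
\end{equation*}
Writing $u_r(\theta)=u(re^{i\theta})$, the family $\{u_r\}_{r<1}$ is bounded in $L^\infty(\partial\mathbb{D})=\bigl(L^1(\partial\mathbb{D})\bigr)^*$, so by Banach--Alaoglu and the separability of $L^1$ some sequence $r_n\to1$ produces a weak-$*$ limit $u^*\in L^\infty(\partial\mathbb{D})$. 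Since $\frac{r^2-|z|^2}{|re^{i\theta}-z|^2}\to\frac{1-|z|^2}{|e^{i\theta}-z|^2}$ in $L^1(d\theta)$ as $r\to1$ for each fixed $z$, I can pass to the limit in the displayed identity along $r_n$---splitting off the kernel difference, which is dominated by $\|u_{r_n}\|_\infty$ times an $L^1$-vanishing term---to obtain the Poisson integral formula \eqref{eq:PIF} for this $u^*$.

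It then remains to show that this same $u^*$ is realized as the non-tangential boundary limit of $u$ almost everywhere, which is the genuine content of Fatou's theorem. Here I would exploit the fact that as $z\to e^{i\theta_0}$ non-tangentially the measures $\frac{1-|z|^2}{|e^{i\theta}-z|^2}\frac{d\theta}{2\pi}$ form an approximate identity concentrating at $\theta_0$. To promote this to an almost-everywhere statement one needs uniform control: the non-tangential maximal function of $u$ must be dominated by a constant multiple of the Hardy--Littlewood maximal function of $u^*$ (a pointwise estimate exploiting that the Poisson kernel is radially decreasing and integrable). Combined with the weak-type $(1,1)$ maximal inequality and the Lebesgue differentiation theorem---convergence being elementary on the dense class of continuous functions---this yields $u(z)\to u^*(e^{i\theta_0})$ non-tangentially at every Lebesgue point of $u^*$, hence almost everywhere.

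The main obstacle is this last step. The representation is essentially functional analysis, whereas the non-tangential convergence rests on the real-variable maximal-function machinery, and the delicate point is the pointwise domination of the non-tangential maximal function by the Hardy--Littlewood maximal function. I note finally that the weak-$*$ limit $u^*$ is \emph{a priori} subsequence-dependent; but once non-tangential limits are shown to exist almost everywhere and to equal the boundary function, the uniqueness of $u^*$ follows, so the construction is consistent and the two assertions of the theorem hold simultaneously.
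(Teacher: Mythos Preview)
The paper does not supply its own proof of this classical theorem; it merely cites Tsuji's textbook \cite[Chapter IV, \S2]{MR0114894} and uses the result as a black box. Your outline is correct and is one of the standard modern routes to Fatou's theorem: obtain the Poisson representation by weak-$*$ compactness of the bounded family $\{u_r\}$ in $L^\infty(\partial\mathbb{D})$, then upgrade to non-tangential convergence via the pointwise domination of the non-tangential maximal operator by the Hardy--Littlewood maximal function. The splitting argument you describe for passing to the limit in the kernel is fine (and in fact the convergence $P_r(z,\cdot)\to P_1(z,\cdot)$ is uniform in $\theta$ for fixed $|z|<1$, which is stronger than you need). Your closing remark about the a priori subsequence-dependence of $u^*$ being resolved a posteriori by the non-tangential limit is also correct; alternatively one can note from the outset that harmonicity forces any two weak-$*$ limits to have the same Poisson integral, hence to agree almost everywhere.
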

See, e.g., \cite[Chapter IV, §2]{MR0114894}.
The (weighted) measure on $\partial \mathbb{D}$ given by
\begin{equation}
\label{eq:HM}
d\boldsymbol{\omega}_z = \frac{1 - |z|^2}{|z - e^{i\theta}|^2} \frac{d\theta}{2\pi}
\end{equation}
in the integral \eqref{eq:PIF} is called the \emph{harmonic measure}\index{harmonic measure}\index{unit disk!harmonic measure} on $\partial \mathbb{D}$ at $z \in \mathbb{D}$, and the factor $(1 - |z|^2)/|z - e^{i\theta}|^2$ is known as the \emph{Poisson kernel}\index{Poisson kernel}\index{unit disk!Poisson kernel} for the unit disk $\mathbb{D}$ (see, e.g., \cite{MR2450237}).

To express this result in terms of the upper half-plane $\mathbb{H}$,\index{upper half plane} we consider the push-forward of the measure via the conformal map $T \colon \mathbb{D} \to \mathbb{H}$ with $T(0) = \imaginaryunit$. We then obtain
\begin{equation}
\label{eq:HM_upper-half-plane}
d(T_*\boldsymbol{\omega}_{z}) = \frac{\operatorname{Im}(\tau)}{|\tau - t|^2} \frac{dt}{\pi}
= \frac{\operatorname{Im}(\tau)(1 + t^2)}{|\tau - t|^2} \, d(T_*\boldsymbol{\omega}_{0}),
\end{equation}
where $\tau = T(z)$ and $t = T(e^{i\theta})$, and
\begin{equation}
\label{eq:normalized_spherical_measure}
d(T_*\boldsymbol{\omega}_{0}) = \frac{dt}{\pi(1 + t^2)}
\end{equation}
is the normalized spherical measure on $\hat{\mathbb{R}} = \partial \mathbb{H}$.
The measure \eqref{eq:HM_upper-half-plane} is called the \emph{harmonic measure}\index{harmonic measure}\index{upper half plane!harmonic measure} at $\tau \in \mathbb{H}$, and the factor $\operatorname{Im}(\tau)/|\tau - t|^2$ in the middle expression is referred to as the \emph{Poisson kernel}\index{Poisson kernel}\index{upper half plane!Poisson kernel} (see \cite[p.~4]{MR2450237}).
In certain contexts, the Poisson kernel can be interpreted as the Radon--Nikodym derivative of one harmonic measure with respect to another, reflecting the absolute continuity between harmonic measures at different points. Under this interpretation, the factor appearing in the last expression of \eqref{eq:HM_upper-half-plane} is also regarded as a Poisson kernel (cf.~\cite{MR881709}).

When $z = 0 \in \mathbb{D}$, the harmonic measure \eqref{eq:HM} coincides with the normalized angular measure on $\partial \mathbb{D}$. By conformal invariance, the harmonic measure \eqref{eq:HM_upper-half-plane} corresponds to the visual angle measure seen from $\tau \in \mathbb{H}$ (cf.~\cite[\S1.5]{MR279280}).

\subsubsection{Function theory on $\teich_1$}
\label{subsubsec:function_theory_T1}

We interpret the observation in the previous section in the context of Teichm\"uller theory.  
Note that the Teichm\"uller space $\teich_1$ can be identified with the upper half-plane $\mathbb{H}$.  
In a formal calculation, for $t \in \mathbb{R} \subset \hat{\mathbb{R}} = \partial \mathbb{H}$ and $\lambda = [x, y] \in \mathcal{MF} \setminus \{0\}$ corresponding to $t = -x/y$ via the map \eqref{eq:PMF_torus}, we have  
\begin{equation}
\label{eq:Poisson_kernel_Torus}
\frac{\ext_{\imaginaryunit}(\lambda)}{\ext_{\tau}(\lambda)}
= (x^2 + y^2)\frac{\operatorname{Im}(\tau)}{|x + y\tau|^2}
= \frac{\operatorname{Im}(\tau)(1 + t^2)}{|\tau - t|^2},
\end{equation}
which coincides with the Poisson kernel appearing in the last term of \eqref{eq:HM_upper-half-plane}.

Next, we discuss how the normalized spherical measure $d(T_*\boldsymbol{\omega}_0)$ in \eqref{eq:normalized_spherical_measure} can be interpreted from the viewpoint of Teichm\"uller theory.

For $\tau \in \mathbb{H} \cong \teich_1$, define
\[
\mathcal{SMF}_\tau = \left\{ \lambda \in \mathcal{MF} \mid \ext_\tau(\lambda) = 1 \right\}
\]
(cf. Figure \ref{fig:Indicatrix}).
\begin{figure}[t]
\includegraphics[width=10cm, bb=13 11 586 78]{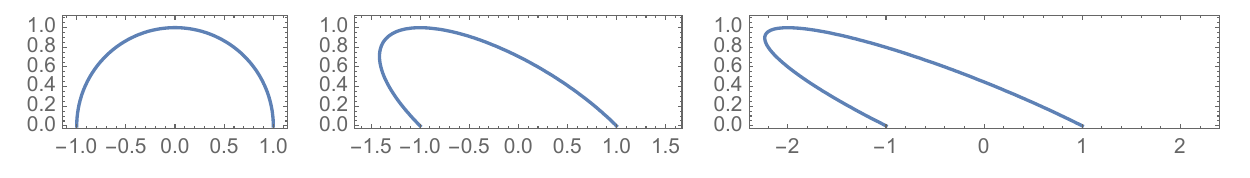}
\caption{$\mathcal{SMF}_\tau$ in $\mathcal{MF}=\mathbb{R}^2/\mathbb{Z}_2$ for $\tau=\imaginaryunit$, $1+\imaginaryunit$, and $2+\imaginaryunit$ (from left to right).}
\label{fig:Indicatrix}
\end{figure}

The projection $\mathcal{MF} \setminus \{0\} \to \mathcal{PMF}$ induces a homeomorphism
\[
\boldsymbol{\psi}_\tau \colon \mathcal{SMF}_\tau \to \mathcal{PMF}.
\]
For any subset $A \subset \mathcal{PMF}$, we define
\begin{equation}
\label{eq:cone_PMF}
\operatorname{Cone}_\tau(A) = \left\{ t\lambda \in \mathcal{MF} \mid
\lambda \in \mathcal{SMF}_\tau \text{ with } \boldsymbol{\psi}_\tau(\lambda) \in A, \quad 0 \le t \le 1 \right\}.
\end{equation}
Then, the \emph{Thurston measure}\index{Thurston measure} on $\mathcal{PMF}$ associated with $\tau \in \mathbb{H} \cong \teich_1$ is defined by
\[
\PThursM^\tau(A) = \frac{\ThursM\left(\operatorname{Cone}_\tau(A)\right)}{\ThursM\left(\operatorname{Cone}_\tau(\mathcal{PMF})\right)}
\]
for any Borel set $A \subset \mathcal{PMF}$.

We claim that $\PThursM^\tau$ coincides with the harmonic measure \eqref{eq:HM_upper-half-plane} at $\tau \in \mathbb{H}$. First, consider the special case $\tau = \imaginaryunit$. 
Since $\ext_\imaginaryunit([x,y]) = x^2 + y^2$, the set $\mathcal{SMF}_\imaginaryunit$ is a (half) circle.  
Hence, when we parametrize $\mathcal{SMF}_\imaginaryunit$ by $x = \cos\theta$ and $y = \sin\theta$ for $0 \le \theta \le \pi$, the measure $\PThursM^\imaginaryunit$ is simply the pushforward of the normalized angular measure $d\theta / \pi$ via $\boldsymbol{\psi}_\imaginaryunit$. Thus,
\begin{equation}
\label{eq:pushforward_measure_imaginary_unit}
d\PThursM^\imaginaryunit= (\boldsymbol{\psi}_\imaginaryunit)_*\left(\frac{d\theta}{\pi}\right)
= \frac{1}{\pi(1 + t^2)}\,dt,
\end{equation}
where $t = -x/y = -\cot(\theta)$, which coincides with the normalized spherical measure \eqref{eq:normalized_spherical_measure}.

Next, we verify the following formula:
\begin{equation}
\label{eq:harmonic_measure_teich1}
d\PThursM^\tau = \frac{\ext_{\imaginaryunit}(\lambda)}{\ext_{\tau}(\lambda)}\, d\PThursM^\imaginaryunit
\end{equation}
on $\mathcal{PMF}$, which implies that $\PThursM^\tau$ is the harmonic measure at $\tau \in \mathbb{H} \cong \teich_1$ via \eqref{eq:HM_upper-half-plane} and \eqref{eq:Poisson_kernel_Torus}.

Let $\tau \in \mathbb{H}$.  
Define the map $T_\tau \colon \mathcal{MF} \to \mathcal{MF}$ by
\[
T_\tau(\lambda) = \left(\frac{\ext_\imaginaryunit(\lambda)}{\ext_\tau(\lambda)}\right)^{1/2} \lambda.
\]
Then $T_\tau$ maps $\mathcal{SMF}_\imaginaryunit$ onto $\mathcal{SMF}_\tau$.
Let $A \subset \mathcal{PMF}$ be a Borel set.
Let $A' \subset \mathcal{SMF}_\tau$ satisfy $\boldsymbol{\psi}_\tau(A')=A$.
Consider the cone $\operatorname{Cone}(A') \subset \mathcal{MF}$ as defined in \eqref{eq:cone_PMF}.  
Let $(r, \theta)$ denote polar coordinates on $\mathcal{MF}$ (with $r \ge 0$ and $0 \le \theta < \pi$), and let $A'' \subset [0, \pi)$ be the set of angles corresponding to $A'$. Then we have
\begin{align*}
\ThursM(\operatorname{Cone}_\tau(A))
&= \frac{1}{2} \int_{\operatorname{Cone}(A')} dx\,dy \\
&= \frac{1}{2} \int_{A''} \int_0^{T_\tau([\cos\theta, \sin\theta])} r\,dr\,d\theta \\
&= \frac{1}{4} \int_{A''} T_\tau([\cos\theta, \sin\theta])^2\, d\theta \\
&= \frac{\pi}{4} \int_{A''} \frac{\ext_\imaginaryunit([\cos\theta, \sin\theta])}{\ext_\tau([\cos\theta, \sin\theta])} \cdot \frac{d\theta}{\pi} \\
&= \frac{\pi}{4} \int_{\boldsymbol{\psi}_\tau(A')} \frac{\ext_\imaginaryunit(\lambda)}{\ext_\tau(\lambda)}\, d\PThursM^\imaginaryunit([\lambda]),
\end{align*}
and in particular, using \eqref{eq:Poisson_kernel_Torus} and \eqref{eq:pushforward_measure_imaginary_unit},
\begin{align*}
\ThursM(\operatorname{Cone}_\tau(\mathcal{PMF}))
&= \frac{\pi}{4} \int_{\mathcal{PMF}} \frac{\ext_\imaginaryunit(\lambda)}{\ext_\tau(\lambda)}\, d\PThursM^\imaginaryunit([\lambda]) \\
&= \frac{\pi}{4} \int_{\hat{\mathbb{R}}} \frac{\operatorname{Im}(\tau)(1 + t^2)}{|\tau - t|^2} \cdot \frac{1}{\pi(1 + t^2)}\, dt
= \frac{\pi}{4}.
\end{align*}
These calculations confirm the validity of formula \eqref{eq:harmonic_measure_teich1}.

\subsection{Function theory on $\teich_{g,m}$}
\label{subsec:function_theory_Tgm}

In the previous sections, we provided an interpretation of the Poisson formula in the Teichm\"uller theoretic setting for the Teichm\"uller space of tori.  
Recently, this observation has been generalized to the case of arbitrary analytically finite Riemann surfaces.

\subsubsection{Recap of the situation}

Let $x \in \teich_{g,m}$. We denote
\[
\mathcal{SML}_x = \{\lambda \in \ml_{g,m} \mid \ext_x(\lambda) = 1\}.
\]
Then, the projection $\ml_{g,m} \setminus \{0\} \to \pml_{g,m}$ induces a homeomorphism
\[
\boldsymbol{\psi}_x \colon \mathcal{SML}_x \to \pml_{g,m}.
\]

For any Borel set $A \subset \pml_{g,m}$, we define
\[
\PThursM^x(A) = \frac{\ThursM(\operatorname{Cone}_x(A))}{\ThursM(\operatorname{Cone}_x(\pml_{g,m}))},
\]
where the cone $\operatorname{Cone}_x(A)$ of $A$ is defined as in \eqref{eq:cone_PMF}.  
The measure $\PThursM^x$ is called the \emph{Thurston measure}\index{Thurston measure} on $\pml_{g,m}$ at $x \in \teich_{g,m}$.

Let $[\omega] \in \mcg(\Sigma_{g,m})$ and $x = (M,f) \in \teich_{g,m}$.  
Since $\ext_{[\omega](x)}(\lambda) = \ext_x([\omega]^{-1}(\lambda))$, for any $A \subset \pml_{g,m}$, we have
\begin{align*}
\operatorname{Cone}_{[\omega](x)}(A)
&= \{\lambda \in \ml_{g,m} \mid \ext_{[\omega](x)}(\lambda) \le 1, \ [\lambda] \in A\} \\
&= \{\lambda \in \ml_{g,m} \mid \ext_x([\omega]^{-1}(\lambda)) \le 1, \ [\lambda] \in A\} \\
&= \{[\omega](\lambda) \in \ml_{g,m} \mid \ext_x(\lambda) \le 1, \ [\omega]([\lambda]) \in A\} \\
&= [\omega]\big(\{\lambda \in \ml_{g,m} \mid \ext_x(\lambda) \le 1, \ [\lambda] \in [\omega]^{-1}(A)\}\big) \\
&= [\omega](\operatorname{Cone}_x([\omega]^{-1}(A))).
\end{align*}
Since the Thurston measure $\ThursM$ is $\mcg(\Sigma_{g,m})$-invariant, it follows that
\begin{align}
\label{eq:invariance_ThP}
\PThursM^{[\omega](x)}(A)
&= \frac{\ThursM(\operatorname{Cone}_{[\omega](x)}(A))}{\ThursM(\operatorname{Cone}_{[\omega](x)}(\pml_{g,m}))} \\
&= \frac{\ThursM(\operatorname{Cone}_x([\omega]^{-1}(A)))}{\ThursM(\operatorname{Cone}_x(\pml_{g,m}))} \notag \\
&= \PThursM^x([\omega]^{-1}(A)) \notag \\
&= [\omega]_*(\PThursM^x)(A). \notag
\end{align}
By applying a similar calculation, we also obtain
\begin{equation}
\label{eq:invariance_ThP2}
\PThursM^{y}(A)=\int_A\left(\frac{\ext_x(\lambda)}{\ext_y(\lambda)}\right)^{3g-3+m}d\PThursM^{x}([\lambda])
\end{equation}
for $x$, $y\in \teich_{g,m}$ (cf. \cite[\S2.3.1]{MR2913101}. See also \cite[Corollary 15.3]{MR4633651})

\subsubsection{Pluriharmonic measure in the sense of Demailly}

Let $\partial^{\mathrm{ue}} \Bers{x_0}$ be the set of totally degenerate groups without accidental parabolics whose ending lamination is the support of a uniquely ergodic measured lamination.  
For $\varphi \in \partial^{\mathrm{ue}} \Bers{x_0}$, let $\lambda_\varphi$ be the uniquely ergodic measured lamination whose support is the ending lamination of $\varphi$.  
We define a function
\[
\mathbb{P} \colon \teich_{g,m} \times \teich_{g,m} \times \partial \Bers{x_0} \to \mathbb{R}
\]
by
\[
\mathbb{P}(x,y,\varphi) =
\begin{cases}
\left( \frac{\ext_x(\lambda_\varphi)}{\ext_y(\lambda_\varphi)} \right)^{3g - 3 + m} & (\varphi \in \partial^{\mathrm{ue}} \Bers{x_0}), \\
1 & \text{otherwise}.
\end{cases}
\]

In \cite{MR4633651}, the author showed the following.

\begin{theorem}[Pluriharmonic measure]
\label{thm:pluriharmonic_measure}
For $x \in \teich_{g,m}$, the pushforward measure
\[
\pushThursMBers_x := (\Xi_{x_0})_* \PThursM^x
\]
is the pluriharmonic measure at $x$ on the Bers slice $\Bers{x_0}$ in the sense of Demailly \cite{MR881709}, where $\Xi_{x_0}$ is the map defined in \eqref{eq:map_PMLmf_to_BB}. Furthermore,
\[
d\pushThursMBers_x = \mathbb{P}(x,y,\cdot)\, d\pushThursMBers_y,
\]
for all $x,y \in \teich_{g,m}$.  
Hence, the factor $\mathbb{P}(x,y,\cdot)$ is interpreted as the Poisson kernel.
\end{theorem}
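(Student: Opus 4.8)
The plan is to verify, on the Bers slice, the two defining features of Demailly's pluriharmonic measure and then read off the Radon--Nikodym identity from the transformation rule \eqref{eq:invariance_ThP2}. First I would reduce everything to $\teich_{g,m}$ itself. Since $\mathfrak{B}_{x_0}\colon \teich_{g,m}\to\Bers{x_0}$ is biholomorphic and Demailly's construction is biholomorphically natural, and since $\Bers{x_0}$ is hyperconvex (Krushkal), the pluriharmonic measure at $x$ is the weak-$*$ limit, as $r\uparrow 0$, of the Monge--Amp\`ere boundary measures $\mu_r=(dd^c\max(g,r))^{n}$ carried by the pseudospheres $S_r=\{g(\cdot,x)=r\}$, where $g(\cdot,x)$ is the pluricomplex Green function with pole at $x$ and $n=3g-3+m=\dim_{\mathbb C}\teich_{g,m}$. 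Its characterizing property, via the Demailly--Lelong--Jensen formula, is that $\int u^*\,d\mu_x=(2\pi)^n u(x)$ for every bounded pluriharmonic $u$ (the $dd^c u=0$ term drops out). Thus it suffices to show that the normalized limit of $(\mathfrak{B}_{x_0}^{-1})_*\mu_r$ is carried by the minimal--filling boundary and equals $\PThursM^x$ after transport by $\Xi_{x_0}$.

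The second step is to coordinatize the pseudospheres by directions. By the Teichm\"uller theorem the map $([\lambda],t)\mapsto\TRay{x;[\lambda]}(t)$ is a homeomorphism $\pml_{g,m}\times(0,\infty)\to\teich_{g,m}\setminus\{x\}$, and $\teich_{g,m}$ is foliated by the Teichm\"uller disks (complex geodesics) through $x$; here I would use that $g(\cdot,x)$ restricts on each such disk to the Green function of the disk, so that the level sets $S_r$ are transverse to the rays and are parametrized by $\pml_{g,m}$. For uniquely ergodic $[\lambda]$ the ray $\TRay{x;[\lambda]}$ converges to $[\lambda]$ in the Thurston compactification (\Cref{thm:masur_geodesic}), and its image under $\mathfrak{B}_{x_0}$ converges in $\partial\Bers{x_0}$ to $\Xi_{x_0}([\lambda])$ (\Cref{prop:limit_T-ray}); by Masur's theorem the uniquely ergodic locus has full Thurston measure, and by \Cref{thm:divergence_non-ue} the non--uniquely--ergodic directions give rays that leave every compact set and so contribute nothing to the boundary measure. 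This makes the ray parametrization compatible with $\Xi_{x_0}$ off a $\PThursM^x$--null set.

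The crux is to evaluate the transverse part of the Monge--Amp\`ere measure on $S_r$ and identify it with the Thurston cone measure. Using the complex--geodesic foliation together with Gardiner's formula \eqref{eq:Gardiner-formula}, which expresses $\partial\ext_\lambda$ and $\overline\partial\ext_\lambda$ through the Hubbard--Masur differentials $\mp q_{\lambda,x}$, I would show that $(dd^c\max(g,r))^{n}$ splits into a radial factor that is constant along $S_r$ (because $g$ is the disk Green function on each leaf) and a transverse factor equal to the volume of the extremal--length indicatrix $\mathcal{SML}_x=\{\ext_x=1\}$; the symplectic volume of this indicatrix is exactly the Thurston measure, so the push of the transverse factor under $\boldsymbol{\psi}_x$ is $\PThursM^x$. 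This is the higher--dimensional analogue of the torus computation in \S\ref{subsubsec:function_theory_T1}, where $n=1$, the Monge--Amp\`ere operator is the Laplacian, and the push of area measure on $\mathcal{SML}_\tau$ was shown to be the harmonic measure $\PThursM^\tau$. Letting $r\uparrow0$ and transporting by $\Xi_{x_0}$ then yields $(\Xi_{x_0})_*\PThursM^x$ as the pluriharmonic measure at $x$, which proves the first assertion.

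I expect the genuinely nonlinear factorization of $(dd^c g)^{n}$ for $n=3g-3+m>1$ to be the main obstacle: unlike the torus case the Monge--Amp\`ere operator does not linearize, so one must exploit the complex--geodesic structure of $\teich_{g,m}$ carefully to reduce to a radial times transverse product and to match the transverse measure with the symplectic (Thurston) volume, all while controlling the null set of non--uniquely--ergodic directions on which $\Xi_{x_0}$ is undefined. Finally, the Radon--Nikodym identity $d\pushThursMBers_x=\mathbb{P}(x,y,\cdot)\,d\pushThursMBers_y$ requires no new work: it is the pushforward under $\Xi_{x_0}$ of the transformation rule \eqref{eq:invariance_ThP2}, using that $\PThursM^x$ is carried by the uniquely ergodic laminations (on which $\lambda_\varphi$, hence the extremal--length ratio defining $\mathbb{P}(x,y,\cdot)$, is well defined), so that $\mathbb{P}(x,y,\cdot)$ is precisely the density relating $\pushThursMBers_x$ and $\pushThursMBers_y$.
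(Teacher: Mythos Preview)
The paper does not contain a proof of this theorem: it is stated as a result quoted from \cite{MR4633651} (see the sentence ``In \cite{MR4633651}, the author showed the following'' immediately preceding the statement). So there is no in-paper proof against which to compare your proposal.

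That said, your outline is broadly on the right track and consistent with the surrounding context. A few remarks. First, the paper's companion \cite{MR4028456} establishes that the pluricomplex Green function on $\teich_{g,m}$ with pole at $x$ is $\log\tanh d_T(\cdot,x)$; this is the concrete input that makes your ``$g$ restricts on each Teichm\"uller disk to the disk Green function'' step precise and turns the pseudospheres $S_r$ into genuine Teichm\"uller spheres, so you should invoke it explicitly rather than assume it. Second, your concern about the nonlinear factorization of $(dd^c g)^n$ is exactly the substantive point: the argument in \cite{MR4633651} handles this via the Hubbard--Masur homeomorphism $\ml_{g,m}\setminus\{0\}\to\mathcal{Q}_x\setminus\{0\}$ together with Gardiner's formula, computing the Monge--Amp\`ere mass in the quadratic-differential model where the Teichm\"uller ball is the unit ball $\mathcal{BQ}_x$; this is where the dimension exponent $3g-3+m$ in \eqref{eq:invariance_ThP2} appears. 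Third, your normalization $(2\pi)^n$ is not quite right: the Thurston measures $\PThursM^x$ are defined in the paper as probability measures, and Demailly's pluriharmonic measure is likewise normalized, so the reproducing identity should read $\int u^*\,d\pushThursMBers_x=u(x)$. Your derivation of the Radon--Nikodym identity from \eqref{eq:invariance_ThP2} and the full-measure property of $\pml^{\mathrm{ue}}_{g,m}$ is correct and is exactly how the second assertion follows.
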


\Cref{thm:pluriharmonic_measure} provides a complex-analytic description of the Thurston measure on $\pml_{g,m}$.  
The Thurston measures on $\pml_{g,m}$ have previously been studied from a dynamical perspective.  
For instance, Masur \cite{MR644018} analyzed the measure $\PThursM^x$ to investigate the dynamical properties of the Teichm\"uller geodesic flow.  
In \cite{MR2913101}, Athreya, Bufetov, Eskin, and Mirzakhani observed that the family $\{\PThursM^x\}_{x \in \teich_{g,m}}$ constitutes the unique conformal density (up to scaling) for the action of the mapping class group on $\teich_{g,m}$.  
The only possible dimension is $6g - 6 + 2m$, as determined by the measure classification result of \cite{MR2424174} (see also \cite[\S3]{MR2495764}).

\subsubsection{Radial limit theorem}

The following is proved in \cite{MR4830064}:

\begin{theorem}[Radial limit theorem]
\label{thm:radial_limit}
Let $u$ be a bounded pluriharmonic function on $\teich_{g,m}$. Then there exists a measurable set $\mathcal{E}_0 = \mathcal{E}_0(u) \subset \pml_{g,m}$ such that:
\begin{itemize}
\item[(1)] $\PThursM^x(\mathcal{E}_0) = 1$ for all $x \in \teich_{g,m}$;
\item[(2)] for any $[\lambda] \in \mathcal{E}_0$ and $x \in \teich_{g,m}$, the radial limit
\[
\lim_{t \to \infty} u\big(\TRay{x;[\lambda]}(t)\big)
\]
exists;
\item[(3)] for any $x_1, x_2 \in \teich_{g,m}$ and $[\lambda] \in \mathcal{E}_0(u)$,
\[
\lim_{t \to \infty} u\big(\TRay{x_1;[\lambda]}(t)\big) = \lim_{t \to \infty} u\big(\TRay{x_2;[\lambda]}(t)\big).
\]
\end{itemize}
Furthermore, the set $\mathcal{E}_0$ is contained in $\pml_{g,m}^{\mathrm{ue}}$.
\end{theorem}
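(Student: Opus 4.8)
The plan is to move the problem onto the Bers slice and to use the identification of the Thurston measure with Demailly's pluriharmonic measure furnished by \Cref{thm:pluriharmonic_measure}. First I would set $\tilde{u}=u\circ\mathfrak{B}_{x_0}^{-1}$, a bounded pluriharmonic function on the bounded hyperconvex domain $\Bers{x_0}\subset A_2(\mathbb{D}^*,\Gamma_0)$. The radial limit of $u$ along $\TRay{x;[\lambda]}$ is then the limit of $\tilde{u}$ along the curve $\mathfrak{B}_{x_0}\circ\TRay{x;[\lambda]}$ in $\Bers{x_0}$; by \Cref{prop:limit_T-ray}, for every $[\lambda]\in\pml^{\mathrm{mf}}_{g,m}$ this curve converges to the single boundary point $\Xi_{x_0}([\lambda])\in\partial\Bers{x_0}$ defined in \eqref{eq:map_PMLmf_to_BB}. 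Thus the Teichm\"uller rays provide a distinguished family of approach curves to the Bers boundary, and the theorem reduces to a Fatou-type statement: $\tilde{u}$ admits a limit along $\mathfrak{B}_{x_0}\circ\TRay{x;[\lambda]}$ for $\pushThursMBers_x=(\Xi_{x_0})_*\PThursM^x$-almost every boundary point.

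The heart of the argument is this Fatou theorem, and it is where I expect the main difficulty. The natural route is to introduce the Teichm\"uller-ray maximal function
\[
M\tilde{u}([\lambda])=\sup_{t\ge 0}\bigl|\tilde{u}\bigl(\mathfrak{B}_{x_0}\circ\TRay{x_0;[\lambda]}(t)\bigr)\bigr|
\]
and to prove a weak-type inequality for $M\tilde{u}$ against $\PThursM^{x_0}$; granting this, a.e.\ existence of the radial limit follows by the standard density argument, approximating $\tilde{u}$ by functions continuous up to the boundary and using the maximal inequality to kill the error. The genuine obstacle is that $\Bers{x_0}$ is an infinite-dimensional domain with an analytically wild boundary, so the classical covering lemmas and Poisson-kernel estimates that prove such inequalities are unavailable; one must instead derive the maximal estimate intrinsically from the extremal-length form of the Poisson kernel $\mathbb{P}(x,y,\cdot)$ and the cone/averaging structure of $\PThursM^x$ recorded in \eqref{eq:invariance_ThP2}, controlling how the measures $\PThursM^{\TRay{x_0;[\lambda]}(t)}$ concentrate as $t\to\infty$. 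Masur's convergence theorem \Cref{thm:masur_geodesic} pins down the geometry of this concentration, since for uniquely ergodic $[\lambda]$ the ray converges to $[\lambda]$ in the Thurston compactification.

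To place the exceptional set inside $\pml^{\mathrm{ue}}_{g,m}$, I would invoke Masur's theorem \cite{MR644018} that $\ml^{\mathrm{ue}}_{g,m}$ has full Thurston measure, so that $\PThursM^{x}(\pml^{\mathrm{ue}}_{g,m})=1$ for every $x$; the complement is $\PThursM^{x}$-null and is absorbed into the exceptional set, yielding the final containment $\mathcal{E}_0\subset\pml^{\mathrm{ue}}_{g,m}$. This restriction is also what makes \Cref{prop:limit_T-ray} applicable, since uniquely ergodic laminations are in particular minimal and filling; heuristically \Cref{thm:divergence_non-ue} confirms that non-uniquely-ergodic directions, along which the ray escapes every compact set in moduli space, carry no mass and contribute nothing.

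It remains to pass from a single basepoint to statements (1)--(3) valid for all $x$. Here I would first show that $u$ is Lipschitz for the Teichm\"uller distance: restricting $u$ to any Teichm\"uller disk gives a bounded harmonic function on $\mathbb{D}$, whose hyperbolic gradient is bounded by $C\|u\|_\infty$ via the Harnack/Schwarz--Pick estimate, and since $d_T$ is the Kobayashi distance (Royden \cite{MR0288254}) this gives $|u(p)-u(q)|\le C\|u\|_\infty\,d_T(p,q)$. For uniquely ergodic $[\lambda]$ the rays $\TRay{x_1;[\lambda]}$ and $\TRay{x_2;[\lambda]}$ are asymptotic by \Cref{thm:masur_geodesic}, so the Lipschitz bound forces existence of the limit along $\TRay{x_2;[\lambda]}$ whenever it exists along $\TRay{x_1;[\lambda]}$, and forces the two limits to agree; this yields (2) and (3) simultaneously from the single-basepoint Fatou theorem. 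Finally (1) follows because the measures $\{\PThursM^x\}_x$ are mutually absolutely continuous by \eqref{eq:invariance_ThP2}, so a set of full $\PThursM^{x_0}$-measure has full $\PThursM^{x}$-measure for every $x$.
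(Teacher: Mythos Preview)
Your treatment of (1), (2), (3) once a single-basepoint Fatou theorem is in hand --- mutual absolute continuity from \eqref{eq:invariance_ThP2}, the Lipschitz bound on $u$ via Schwarz--Pick on Teichm\"uller disks, and Masur's asymptotic-ray theorem \Cref{thm:masur_geodesic} --- is correct and matches the paper. The gap is the Fatou step itself. You propose a weak-type maximal inequality for the Teichm\"uller-ray maximal function on the Bers slice, but you do not prove it; you only assert that it should follow from the kernel $\mathbb{P}$ and the concentration of $\PThursM^{\TRay{x_0;[\lambda]}(t)}$. (Incidentally, $\Bers{x_0}\subset A_2(\mathbb{D}^*,\Gamma_0)\cong\mathbb{C}^{3g-3+m}$ is finite-dimensional, not infinite-dimensional.) Even granting a Fatou theorem for bounded pluriharmonic functions on $\Bers{x_0}$, the curves $\mathfrak{B}_{x_0}\circ\TRay{x_0;[\lambda]}$ are neither radial nor nontangential in any standard sense in $A_2(\mathbb{D}^*,\Gamma_0)$, so off-the-shelf results would not apply; and your density step, approximating $\tilde{u}$ by pluriharmonic functions continuous up to the boundary, is unjustified for a boundary as irregular as $\partial\Bers{x_0}$.

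The paper bypasses all of this and never leaves $\teich_{g,m}$. A Teichm\"uller disk through $x_0$ is, via \eqref{eq:Teichmuller_homeomorphism} and \eqref{eq:Teichmuller-geodesic}, a holomorphically and isometrically embedded copy of $(\mathbb{D},d_{\mathbb{D}})$, and the Teichm\"uller rays from $x_0$ lying in that disk are exactly the images of its Euclidean radii. Since $u$ is pluriharmonic, its restriction to the disk is a bounded harmonic function on $\mathbb{D}$, to which the classical one-variable Fatou theorem (\Cref{thm:Fatou1}) applies directly: radial limits exist for Lebesgue-a.e.\ direction in that disk. A Fubini-type argument over the family of Teichm\"uller disks through $x_0$ then upgrades this to $\PThursM^{x_0}$-a.e.\ $[\lambda]\in\pml_{g,m}$, with no maximal inequality or Bers-boundary analysis required.
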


The idea of the proof is to consider the family of Teichm\"uller disks passing through a given point $x_0 \in \teich_{g,m}$.  
By restricting to Teichm\"uller disks, a given pluriharmonic function becomes a bounded harmonic function on the disk.  
Hence, the Fatou theorem, namely \Cref{thm:radial_limit}, can be applied.  
The condition (2) in \Cref{thm:radial_limit} follows from Masur's criterion (cf. \Cref{thm:masur_geodesic}).

\begin{definition}[Radial limit function]
Let $u$ be a bounded pluriharmonic function on $\teich_{g,m}$.  
The \emph{radial limit function}\index{radial limit function of pluriharmonic function} $u^*$ of $u$ is the bounded measurable function on $\pml_{g,m}$ defined by
\[
u^*([\lambda]) :=
\begin{cases}
\displaystyle \lim_{t \to \infty} u\big(\TRay{x;[\lambda]}(t)\big) & \text{if } [\lambda] \in \mathcal{E}_0(u), \\
0 & \text{otherwise},
\end{cases}
\]
for some (equivalently, any) $x \in \teich_{g,m}$.  
By condition (3) in \Cref{thm:radial_limit}, this definition is independent of the choice of $x$.
\end{definition}

When appropriate, we also assume that the set $\mathcal{E}_0(u)$ is invariant under the action of a subgroup of $\mcg(\Sigma_{g,m})$, since $\mcg(\Sigma_{g,m})$ is a countable group.  
If $\mathcal{E}_0(u)$ is invariant under the action of a subgroup $H < \mcg(\Sigma_{g,m})$, then condition (3) in \Cref{thm:radial_limit} further implies that the action of any mapping class $[\omega] \in H$ on both $\teich_{g,m}$ as in \eqref{eq:mcg_action_T} and on $\pml_{g,m}$ as in \eqref{eq:actionMCG_on_PML} commutes with taking radial limits:
\begin{equation}
\label{eq:invariance_radial_limits}
(u \circ [\omega])^* = u^* \circ [\omega].
\end{equation}

\subsubsection{Poisson integral}

In a forthcoming paper \cite{bounded-PIF-miyachi2025}, using radial limits of bounded pluriharmonic functions, we obtain the following result, which serves as a full analogue of the Fatou theorem (\Cref{thm:Fatou1}).

\begin{theorem}[Poisson integral formula]
\label{thm:PIF_bdd_pluri_harmonic_T}
Fix $x_0 \in \teich_{g,m}$.  
Let $u$ be a bounded pluriharmonic function on $\teich_{g,m}$, and let $u^* \in L^\infty(\pml_{g,m}, \PThursM^{x_0})$ denote its radial limit. Then
\[
u(x) = \int_{\pml_{g,m}} u^*([\lambda]) \left( \frac{\ext_{x_0}(\lambda)}{\ext_{x}(\lambda)} \right)^{3g - 3 + m} d\PThursM^{x_0}([\lambda])
\]
for all $x \in \teich_{g,m}$.
\end{theorem}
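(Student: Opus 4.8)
The plan is to transfer the problem to the Bers slice and to invoke Demailly's theory of pluriharmonic measure, which by \Cref{thm:pluriharmonic_measure} is already realized by the pushed-forward Thurston measures $\pushThursMBers_x = (\Xi_{x_0})_*\PThursM^x$. Given a bounded pluriharmonic $u$ on $\teich_{g,m}$, the composition $\tilde u = u \circ \mathfrak{B}_{x_0}^{-1}$ is a bounded pluriharmonic function on the hyperconvex (hence admissible for Demailly's construction) domain $\Bers{x_0}$, and the target reduces to the representation
\[
\tilde u(x) = \int_{\partial \Bers{x_0}} \tilde u^{\flat}\, d\pushThursMBers_x,
\]
where $\tilde u^{\flat}$ is a boundary function defined $\pushThursMBers_{x_0}$-almost everywhere. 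Once this is secured, I would change variables through the Borel map $\Xi_{x_0}$ of \eqref{eq:map_PMLmf_to_BB}: since $\pushThursMBers_x = (\Xi_{x_0})_*\PThursM^x$, the integral pulls back to $\int_{\pml_{g,m}} (\tilde u^{\flat}\circ \Xi_{x_0})\, d\PThursM^x$, and finally \eqref{eq:invariance_ThP2}, applied with base point $x_0$, rewrites $d\PThursM^x([\lambda]) = (\ext_{x_0}(\lambda)/\ext_x(\lambda))^{3g-3+m}\, d\PThursM^{x_0}([\lambda])$, producing precisely the claimed Poisson kernel.

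The identification of boundary data is where the Teichm\"uller-theoretic input enters. For $\PThursM^{x_0}$-almost every $[\lambda]$ --- which by \Cref{thm:radial_limit} lies in $\pml_{g,m}^{\mathrm{ue}}$, in particular is minimal and filling --- \Cref{prop:limit_T-ray} shows that the image $\mathfrak{B}_{x_0}\circ \TRay{x;[\lambda]}$ of the Teichm\"uller ray converges in $\overline{\Bers{x_0}}$ to the totally degenerate group $\Xi_{x_0}([\lambda])$. Combining this with \Cref{thm:radial_limit}, which guarantees that the radial limit of $u$ along $\TRay{x;[\lambda]}$ exists and is independent of the basepoint, I would identify $\tilde u^{\flat}(\Xi_{x_0}([\lambda])) = u^*([\lambda])$ for $\PThursM^{x_0}$-almost every $[\lambda]$, thereby matching the abstract Demailly boundary value with the concrete radial limit $u^*$ and legitimizing the change of variables above.

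The main obstacle is the representation formula itself for bounded --- but not boundary-continuous --- pluriharmonic functions: this is exactly the hard half of a Fatou-type theorem. I would attack it by restricting $\tilde u$ to the Teichm\"uller disks through $x_0$, on each of which $\tilde u$ becomes a bounded harmonic function governed by the classical Fatou/Poisson theory of \Cref{thm:Fatou1}, and then assembling these one-dimensional representations by disintegrating $\PThursM^{x_0}$ over the sphere of directions --- the higher-genus analogue of the coarea computation carried out for $\teich_1$ in \S\ref{subsubsec:function_theory_T1}. Equivalently, one may set $v(x) := \int_{\pml_{g,m}} u^*([\lambda])\,(\ext_{x_0}(\lambda)/\ext_x(\lambda))^{3g-3+m}\,d\PThursM^{x_0}([\lambda])$ and prove $u = v$; this requires two nontrivial facts, namely that the kernel $x \mapsto (\ext_{x_0}(\lambda)/\ext_x(\lambda))^{3g-3+m}$ is pluriharmonic for $\PThursM^{x_0}$-almost every $[\lambda]$ (so that $v$ is pluriharmonic; this is the pluriharmonicity of the Busemann-type function underlying \Cref{thm:pluriharmonic_measure}), together with a uniqueness principle asserting that a bounded pluriharmonic function whose radial limits vanish almost everywhere is identically zero. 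Establishing this uniqueness --- the genuine substitute for the one-dimensional maximum principle --- is where I expect the real difficulty to lie, since on $\teich_{g,m}$ one cannot conclude disk-by-disk in a single variable but must control the interaction across the whole family of Teichm\"uller disks simultaneously, precisely the feature distinguishing pluriharmonicity from harmonicity.
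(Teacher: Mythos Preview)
The paper does not actually prove \Cref{thm:PIF_bdd_pluri_harmonic_T}; it is announced with the sentence ``In a forthcoming paper \cite{bounded-PIF-miyachi2025} \ldots\ we obtain the following result,'' and the text only records that the special case of functions extending continuously to the Bers compactification was handled in \cite{MR4633651} via Demailly's pluriharmonic measure. So there is no proof here to compare against, only the context. Your overall architecture---transport to $\Bers{x_0}$, invoke \Cref{thm:pluriharmonic_measure}, identify the Demailly boundary data with $u^*$ through \Cref{prop:limit_T-ray} and \Cref{thm:radial_limit}, then change variables by \eqref{eq:invariance_ThP2}---is exactly the framework the paper sets up, and your diagnosis that the hard point is passing from continuous boundary data to merely bounded measurable ones is correct.

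There is, however, a concrete error in your second approach. You assert that the kernel $x\mapsto (\ext_{x_0}(\lambda)/\ext_x(\lambda))^{3g-3+m}$ is pluriharmonic for almost every $[\lambda]$, and you cite this as ``the pluriharmonicity of the Busemann-type function underlying \Cref{thm:pluriharmonic_measure}.'' The paper states explicitly in \Cref{remark:2} that the Poisson kernel is \emph{not} pluriharmonic (with a reference to \cite[Remark~12.1]{MR4633651}), and it flags precisely this failure as the obstruction to running Seidel's argument in the converse direction. Consequently there is no direct reason for your candidate $v(x)$ to be pluriharmonic, and the plan ``show $u-v$ is bounded pluriharmonic with zero radial limits, hence zero'' collapses at the first step. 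Whatever mechanism produces the formula in \cite{bounded-PIF-miyachi2025}, it cannot be the pointwise pluriharmonicity of the kernel.

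Your first approach---restrict to Teichm\"uller disks and reassemble by disintegrating $\PThursM^{x_0}$---is much closer to the method the paper sketches for \Cref{thm:radial_limit}, but as written it also has a gap: disintegrating over disks through the fixed point $x_0$ naturally yields the value $u(x_0)$, whereas the theorem requires the formula at an arbitrary $x$. You would need either to vary the center of the pencil of disks and control how the disintegration interacts with the change-of-basepoint formula \eqref{eq:invariance_ThP2}, or to argue via the Demailly machinery that the boundary-continuous case of \cite{MR4633651} already determines the bounded case by an approximation or monotone-class argument. Neither step is supplied in your outline.
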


Since $\pml_{g,m}^{\textrm{ue}} \subset \pml_{g,m}^{\textrm{mf}}$, \Cref{thm:radial_limit}, together with \Cref{prop:limit_T-ray} and \eqref{eq:map_PMLmf_to_BB}, ensures the existence of radial limit functions (via Teichmüller geodesic rays) for bounded pluriharmonic functions on $\Bers{x_0}$. \Cref{thm:PIF_bdd_pluri_harmonic_T} then yields the corresponding Poisson integral formula. Namely, we have:

\begin{theorem}[Poisson integral formula for $\Bers{x_0}$]
\label{thm:PIF_bdd_pluri_harmonic_TBers}
Let $u$ be a bounded pluriharmonic function on $\Bers{x_0}$, and let $u^* \in L^\infty(\partial \Bers{x_0}, \pushThursMBers_{x_0})$ denote its radial limit. Then
\[
u(x) = \int_{\partial \Bers{x_0}} u^*(\varphi) \mathbb{P}(x_0, x, \varphi) \, d\pushThursMBers_{x_0}(\varphi)
\]
for all $x \in \teich_{g,m} \cong \Bers{x_0}$.
\end{theorem}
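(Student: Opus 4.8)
The plan is to deduce this statement from the Poisson integral formula on $\teich_{g,m}$ (\Cref{thm:PIF_bdd_pluri_harmonic_T}) by transporting everything across the Bers embedding $\mathfrak{B}_{x_0}\colon \teich_{g,m}\to\Bers{x_0}$ and the boundary map $\Xi_{x_0}\colon \pml_{g,m}^{\mathrm{mf}}\to\partial\Bers{x_0}$. The point is that all three ingredients of the formula --- the function, the kernel, and the measure --- have already been matched up: the measures are related by the pushforward identity $\pushThursMBers_{x_0}=(\Xi_{x_0})_*\PThursM^{x_0}$ of \Cref{thm:pluriharmonic_measure}, the kernel $\mathbb{P}$ is defined precisely so as to agree with the ratio of extremal lengths, and the boundary values will correspond under $\Xi_{x_0}$ via \Cref{prop:limit_T-ray}.

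First I would set $\tilde u=u\circ\mathfrak{B}_{x_0}$. Since $\mathfrak{B}_{x_0}$ is biholomorphic and pluriharmonicity is preserved under precomposition with a biholomorphism, $\tilde u$ is a bounded pluriharmonic function on $\teich_{g,m}$, and \Cref{thm:PIF_bdd_pluri_harmonic_T} yields
\[
\tilde u(x)=\int_{\pml_{g,m}}\tilde u^*([\lambda])\left(\frac{\ext_{x_0}(\lambda)}{\ext_x(\lambda)}\right)^{3g-3+m}d\PThursM^{x_0}([\lambda]),
\]
where $\tilde u^*$ is the radial limit function of \Cref{thm:radial_limit}, supported on the full-measure set $\pml_{g,m}^{\mathrm{ue}}$. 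Next I would record the three identifications on $\pml_{g,m}^{\mathrm{ue}}$. For the kernel, the definition of $\mathbb{P}$ gives $\mathbb{P}(x_0,x,\Xi_{x_0}([\lambda]))=(\ext_{x_0}(\lambda)/\ext_x(\lambda))^{3g-3+m}$ because $\lambda_{\Xi_{x_0}([\lambda])}=\lambda$ for uniquely ergodic $[\lambda]$. For the boundary values, I would verify $u^*\circ\Xi_{x_0}=\tilde u^*$ holds $\PThursM^{x_0}$-almost everywhere: by \Cref{prop:limit_T-ray} the image $\mathfrak{B}_{x_0}\circ\TRay{x;[\lambda]}$ of a Teichm\"uller ray with uniquely ergodic direction converges in $\overline{\Bers{x_0}}$ to $\Xi_{x_0}([\lambda])$, so the radial limit of $u$ at $\Xi_{x_0}([\lambda])$ equals $\lim_{t\to\infty}u(\mathfrak{B}_{x_0}(\TRay{x;[\lambda]}(t)))=\lim_{t\to\infty}\tilde u(\TRay{x;[\lambda]}(t))=\tilde u^*([\lambda])$.

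Finally I would apply the change-of-variables formula for the pushforward measure: for any bounded measurable $g$ on $\partial\Bers{x_0}$ one has $\int_{\partial\Bers{x_0}}g\,d\pushThursMBers_{x_0}=\int_{\pml_{g,m}}(g\circ\Xi_{x_0})\,d\PThursM^{x_0}$. Taking $g(\varphi)=u^*(\varphi)\,\mathbb{P}(x_0,x,\varphi)$ and substituting the kernel and boundary-value identities turns the right-hand side into exactly the displayed integral for $\tilde u(x)$; under the identification $\teich_{g,m}\cong\Bers{x_0}$ we have $\tilde u(x)=u(x)$, which gives the assertion.

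The main obstacle I anticipate is the measure-theoretic bookkeeping required to make all these identifications simultaneously valid off a single $\PThursM^{x_0}$-null set, and to guarantee that the pushforward neither loses nor duplicates mass. Concretely, I must use that $\Xi_{x_0}$ is Borel measurable (noted in the excerpt) and that, restricted to the full-measure set of uniquely ergodic directions where radial limits exist (combining \Cref{thm:radial_limit} with Masur's theorems, \Cref{thm:masur_geodesic} and \Cref{thm:divergence_non-ue}), it is essentially injective, so that $u^*$ is genuinely the almost-everywhere radial limit of $u$ on $\partial\Bers{x_0}$ rather than merely a function transported from $\pml_{g,m}$, and so that the change of variables reads off the correct boundary integrand. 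By contrast, the biholomorphic transfer of pluriharmonicity and the kernel identity are routine once this boundary correspondence is pinned down.
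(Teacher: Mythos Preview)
Your proposal is correct and follows essentially the same approach as the paper: the paper deduces \Cref{thm:PIF_bdd_pluri_harmonic_TBers} from \Cref{thm:PIF_bdd_pluri_harmonic_T} by noting that the inclusion $\pml_{g,m}^{\mathrm{ue}}\subset\pml_{g,m}^{\mathrm{mf}}$, together with \Cref{thm:radial_limit}, \Cref{prop:limit_T-ray} and the map $\Xi_{x_0}$, provides radial limits via Teichm\"uller geodesic rays, and then the formula follows by the pushforward identity $\pushThursMBers_{x_0}=(\Xi_{x_0})_*\PThursM^{x_0}$ and the definition of $\mathbb{P}$. Your write-up simply makes explicit the change-of-variables and measurability bookkeeping that the paper leaves implicit.
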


As a corollary, we obtain:

\begin{corollary}[Radial limits are non-constant]
\label{coro:radial_limit_non-constant}
The radial limit of any non-constant bounded pluriharmonic function on $\teich_{g,m}$ is non-constant as a measurable function.
\end{corollary}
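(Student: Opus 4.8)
The plan is to prove the contrapositive: I will show that if the radial limit function $u^*$ of a bounded pluriharmonic function $u$ is constant $\PThursM^{x_0}$-almost everywhere, then $u$ is itself constant. The only inputs required are the Poisson integral formula (\Cref{thm:PIF_bdd_pluri_harmonic_T}) and the normalization of the family of Thurston measures recorded in \eqref{eq:invariance_ThP2}.

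First I would record that the Poisson kernel is a probability density. Taking $y = x$ in \eqref{eq:invariance_ThP2} with base point $x_0$, and using that each $\PThursM^{x}$ is by definition a normalized cone measure (hence a probability measure on $\pml_{g,m}$), gives
\[
\int_{\pml_{g,m}} \left( \frac{\ext_{x_0}(\lambda)}{\ext_{x}(\lambda)} \right)^{3g - 3 + m} d\PThursM^{x_0}([\lambda]) = \PThursM^{x}(\pml_{g,m}) = 1.
\]
Equivalently, one may apply \Cref{thm:PIF_bdd_pluri_harmonic_T} to the constant function $u \equiv 1$, whose radial limit is identically $1$, to obtain the same identity.

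Now suppose $u^* = c$ holds $\PThursM^{x_0}$-almost everywhere for some $c \in \mathbb{R}$. Substituting into \Cref{thm:PIF_bdd_pluri_harmonic_T} and factoring out the constant, I obtain
\[
u(x) = c \int_{\pml_{g,m}} \left( \frac{\ext_{x_0}(\lambda)}{\ext_{x}(\lambda)} \right)^{3g - 3 + m} d\PThursM^{x_0}([\lambda]) = c
\]
for every $x \in \teich_{g,m}$, so $u$ is constant, as desired.

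Given that every ingredient is already in hand, I do not expect a genuine obstacle. The only point meriting care is the measure-theoretic one: replacing ``$u^*$ equals $c$ almost everywhere'' with the pointwise value $c$ inside the integral, which is legitimate because the integration is against the probability measure $\PThursM^{x_0}$ and a null set cannot affect the value of the integral. The conceptual content of the corollary is thus entirely absorbed into \Cref{thm:PIF_bdd_pluri_harmonic_T}: once a bounded pluriharmonic function is faithfully reconstructed from its boundary values by a probability kernel, constant boundary data can only come from a constant function.
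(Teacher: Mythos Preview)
Your proof is correct and is exactly the argument the paper has in mind: the corollary is stated without proof immediately after \Cref{thm:PIF_bdd_pluri_harmonic_T} and \Cref{thm:PIF_bdd_pluri_harmonic_TBers} as an immediate consequence, and your contrapositive computation (constant boundary data plus the fact that the Poisson kernel has total mass one, as recorded in \eqref{eq:invariance_ThP2}) is the intended one-line deduction.
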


The Poisson integral formulas in \Cref{thm:PIF_bdd_pluri_harmonic_T} and \Cref{thm:PIF_bdd_pluri_harmonic_TBers} were first established in~\cite{MR4633651} for bounded pluriharmonic functions on $\Bers{x_0}$ that extend continuously to the Bers compactification.

\Cref{coro:radial_limit_non-constant} was considered in~\cite{MR4830064} in the context of bounded holomorphic functions, where a version of the F. and M. Riesz theorem was proved for their boundary value functions. This result was then applied to show that the action of the Torelli group on $\pml_{g}$ is not ergodic. We will return to this topic in \Cref{remark:1}.

Let $f \in L^1(\pml_{g,m}, \PThursM^{x_0})$. We define the \emph{Poisson integral}\index{Poisson integral}\index{Teichmüller space!Poisson integral} of $f$ by
\[
\mathbb{P}[f](x) = \int_{\pml_{g,m}} f([\lambda]) \left( \frac{\ext_{x_0}(\lambda)}{\ext_{x}(\lambda)} \right)^{3g - 3 + m} d\PThursM^{x_0}([\lambda]).
\]
From the Gardiner formula discussed in \S\ref{subsec:Gardiner-formula-revisited}, the partial derivatives $\partial \mathbb{P}[f]|_{x} \in \mathfrak{C}^{1,0}_x(\teich_{g,m})$ and $\overline{\partial} \mathbb{P}[f]|_{x} \in \mathfrak{C}^{0,1}_x(\teich_{g,m})$ are given by:
\begin{align*}
\partial \mathbb{P}[f]|_{x} &=
(3g - 3 + m)\int_{\pml_{g,m}}
f([\lambda])
\left( \frac{\ext_{x_0}(\lambda)}{\ext_{x}(\lambda)} \right)^{3g - 3 + m}
\dfrac{q_{\lambda,x}}{\|q_{\lambda,x}\|}  
d\PThursM^{x_0}([\lambda]), \\
\overline{\partial} \mathbb{P}[f]|_{x} &=
(3g - 3 + m)\int_{\pml_{g,m}}
f([\lambda])
\left( \frac{\ext_{x_0}(\lambda)}{\ext_{x}(\lambda)} \right)^{3g - 3 + m} \dfrac{\overline{q_{\lambda,x}}}{\|q_{\lambda,x}\|}
d\PThursM^{x_0}([\lambda]).
\end{align*}
Therefore, the function $f$ defines a holomorphic function $\mathbb{P}[f]$ on $\teich_{g,m}$ if and only if
\begin{equation}
\label{eq:CR}
\int_{\pml_{g,m}}
f([\lambda])
\left( \frac{\ext_{x_0}(\lambda)}{\ext_{x}(\lambda)} \right)^{3g - 3 + m} \dfrac{\overline{q_{\lambda,x}}}{\|q_{\lambda,x}\|}
d\PThursM^{x_0}([\lambda]) = 0
\end{equation}
for all $x \in \teich_{g,m}$. In particular, by \Cref{thm:PIF_bdd_pluri_harmonic_T}, the radial limit $f^*$ of a bounded holomorphic function $f$ on $\teich_{g,m}$ satisfies \eqref{eq:CR}.

\subsection{Future challenge: Function theory and Fourier analysis}
\subsubsection{Herglotz-type formula and Hardy space}
\label{subsub:Her}
G.~Herglotz \cite{Herglotz_G_1911} showed that for any positive harmonic function $u$ on the unit disk $\mathbb{D}$, there exists a unique Borel measure $\mu$ on $\partial \mathbb{D}$ such that
\[
u(z) = \int_{\partial \mathbb{D}} \dfrac{1 - |z|^2}{|z - e^{i\theta}|^2} \, d\mu(e^{i\theta})
\]
for all $z \in \mathbb{D}$ (cf.\ \cite[Note in \S1]{MR268655}).

Given the natural development of function theory, once the Poisson integral formula is established, a Herglotz-type theorem becomes a natural question. The corresponding problem in our context is as follows:

\begin{problem}[Herglotz-type formula]
\label{problem:Her}
For any positive pluriharmonic function $u$ on $\teich_{g,m}$, does there exist a Borel measure $\nu$ on $\pml_{g,m}$ such that
\[
u(x) = \mathbb{P}[\nu](x) =
\int_{\pml_{g,m}}
\left( \frac{\ext_{x_0}(\lambda)}{\ext_{x}(\lambda)} \right)^{3g - 3 + m} d\nu([\lambda])
\]
for all $x \in \teich_{g,m}$?
\end{problem}

An affirmative answer to \Cref{problem:Her} is expected to yield meaningful measures on $\pml_{g,m}$, which would be useful for studying the ergodic theory of the action of subgroups of $\mcg(\Sigma_{g,m})$ discussed in \S\ref{sec:function_theory_dymamics}.

The Poisson integral formulas (\Cref{thm:PIF_bdd_pluri_harmonic_T} and \Cref{thm:PIF_bdd_pluri_harmonic_TBers}) represent bounded pluriharmonic functions on $\teich_{g,m}$ by their boundary functions. However, in contrast to the Fatou theorem (\Cref{thm:Fatou1}), it is not clear which $L^\infty$ functions on $\pml_{g,m}$ are realized as boundary values of bounded pluriharmonic functions.

\begin{problem}
\label{problem:bdd}
Characterize the boundary functions on $\pml_{g,m}$ of bounded pluriharmonic functions on $\teich_{g,m}$.
\end{problem}

\begin{problem}
\label{problem:bdd2}
Characterize Borel measures on $\pml_{g,m}$ whose Poisson integrals are bounded pluriharmonic functions on $\teich_{g,m}$.
\end{problem}

\begin{problem}
\label{problem:hardy}
Study the Hardy spaces on $\teich_{g,m}$.
\end{problem}

The boundary value function (i.e., the radial limit function) of a bounded holomorphic function is essentially characterized by \eqref{eq:CR}. However, its analytic and geometric properties remain largely unknown.

\begin{problem}
\label{problem:CR}
Study the analytic and geometric properties of bounded measurable functions that satisfy the tangential CR-equation given in \eqref{eq:CR}.
\end{problem}
\subsubsection{Analytic measures}
\label{subsubsec:analytic_measures}
The original Poisson kernel in \eqref{eq:PIF} has the form
\begin{align*}
\dfrac{1 - |z|^2}{|z - e^{i\theta}|^2}
&= \operatorname{Re} \left( \dfrac{e^{i\theta} + z}{e^{i\theta} - z} \right)
= 1 + \sum_{n = 1}^\infty e^{-in\theta} z^n + \sum_{n = 1}^\infty e^{in\theta} \overline{z}^{n}.
\end{align*}
Therefore, for an $L^1$ function $f$ on $[0, 2\pi]$, the condition
\[
\int_0^{2\pi} \overline{\partial} \left( \dfrac{1 - |z|^2}{|z - e^{i\theta}|^2} \right) f(\theta)\, d\theta = 0
\]
is equivalent to the vanishing of the negative Fourier coefficients of $f$, that is,
\[
a_n = \int_0^{2\pi} e^{-in\theta} f(\theta)\, d\theta = 0 \quad \text{for all } n < 0.
\]

F.~and M.~Riesz \cite{F_and_M_Riesz_1916} showed that if a finite complex regular Borel measure $\nu$ on the circle satisfies
\begin{equation}
\label{eq:analytic_measure}
\int_0^{2\pi} e^{in\theta} \, d\nu(\theta) = 0 \quad \text{for all } n = 1, 2, \ldots,
\end{equation}
then $\nu$ is absolutely continuous with respect to the Lebesgue measure. Moreover, if the total variation of $\nu$ vanishes on a set of positive Lebesgue measure, then $\nu$ must be the zero measure.

From a function-theoretic point of view, these observations are deeply connected through the theory of Hardy spaces (cf.\ \cite[Theorem 3.7]{MR268655}). More recently, measures satisfying \eqref{eq:analytic_measure} have been generalized to regular Borel measures on compact abelian groups in the context of Fourier analysis. Such measures are called \emph{analytic measures} (see, e.g., \cite{MR144152}, \cite{MR157193}, \cite{MR209771}).

Motivated by this analogy, we call a finite complex regular Borel measure $\nu$ on $\pml_{g,m}$ an \emph{analytic measure} if
\begin{equation}
\label{eq:CR2}
\int_{\pml_{g,m}}
\left( \frac{\ext_{x_0}(\lambda)}{\ext_{x}(\lambda)}\right)^{3g - 3 + m} \dfrac{\overline{q_{\lambda,x}}}{\|q_{\lambda,x}\|}
\, d\nu([\lambda]) = 0
\end{equation}
for all $x \in \teich_{g,m}$.

By the preceding discussion, the Poisson integral
\[
\mathbb{P}[\nu](x) =
\int_{\pml_{g,m}}
\left( \frac{\ext_{x_0}(\lambda)}{\ext_{x}(\lambda)}\right)^{3g - 3 + m} \, d\nu([\lambda])
\]
defines a holomorphic function on $\teich_{g,m}$.

\begin{problem}[Fourier coefficients]
\label{problem:Fourier}
Is there an analogue of Fourier coefficients for Borel (regular) measures on $\pml_{g,m}$ that characterizes whether a given measure is an analytic measure?
\end{problem}

\begin{problem}[Absolute continuity]
\label{problem:abs_conti}
Is every analytic measure absolutely continuous with respect to the Thurston measure?
\end{problem}
\section{Dynamics on Projective Measured Laminations}
\label{sec:DynPML}
In this section, we recall the classification of subgroups of $\mcg(\Sigma_{g,m})$ by McCarthy and Papadopoulos.

\subsection{Prototype example: Kleinian groups}
For references, see \cite{MR1219310}, \cite{MR1638795}, \cite{MR1862839}, and \cite{Thuston-LectureNote}.

A \emph{Kleinian group}\index{Kleinian group} is a discrete subgroup of $\psl_2(\mathbb{C})$. The \emph{limit set}\index{limit set}\index{Kleinian group!limit set} $\Lambda(\Gamma)$ is the set of accumulation points of the orbit $\Gamma(p_0)$ for some (equivalently, any) $p_0 \in \mathbb{H}^3$.  
Since $\Gamma$ is discrete, $\Lambda(\Gamma)$ is a closed subset of $\partial \mathbb{H}^3 = \hat{\mathbb{C}}$.
The complement $\Omega(\Gamma) = \hat{\mathbb{C}} \setminus \Lambda(\Gamma)$ is called the \emph{region of discontinuity}\index{region of discontinuity}\index{Kleinian group!region of discontinuity} of $\Gamma$.

The action of $\Gamma$ on $\Omega(\Gamma)$ is properly discontinuous.
The limit set is characterized as the minimal closed $\Gamma$-invariant subset of $\hat{\mathbb{C}}$.
The cardinality of $\Lambda(\Gamma)$ is either at most $2$ or uncountable. A Kleinian group $\Gamma$ is called \emph{elementary}\index{elementary group}\index{Kleinian group!elementary group} if $\Lambda(\Gamma)$ is finite, and \emph{non-elementary}\index{non-elementary group}\index{Kleinian group!non-elementary group} otherwise. Elementary Kleinian groups are virtually abelian and have been completely classified.

The limit set $\Lambda(\Gamma)$ of a non-elementary Kleinian group $\Gamma$ is a perfect set.  
For such a group $\Gamma$, the limit set $\Lambda(\Gamma)$ coincides with the closure of the set of fixed points of all loxodromic elements in $\Gamma$.  
This characterization implies that $\Lambda(\Gamma_1) = \Lambda(\Gamma)$ for any normal subgroup $\Gamma_1$ of $\Gamma$.  
Indeed, for a loxodromic element $\gamma_1 \in \Gamma_1$ with fixed point $z_0 \in \hat{\mathbb{C}}$ and any $\gamma \in \Gamma$, the conjugate $\gamma \gamma_1 \gamma^{-1}$ is a loxodromic element in $\Gamma_1$ with fixed point $\gamma(z_0)$.  
This shows that the set of loxodromic fixed points of $\Gamma_1$ is $\Gamma$-invariant, and hence so is $\Lambda(\Gamma_1)$.  
This observation implies that any normal subgroup of a non-elementary Kleinian group is also non-elementary.  
It is also known that $\Lambda(\Gamma_1) = \Lambda(\Gamma)$ whenever $\Gamma_1$ is a finite-index subgroup of $\Gamma$.

\subsection{McCarthy--Papadopoulos classification}
\label{subsec:McP_class}
The dynamics of subgroups of the mapping class group, including their limit sets and regions of discontinuity, were investigated by Masur~\cite{MR837978} and McCarthy--Papadopoulos~\cite{MR982564}.

\subsubsection{Thurston’s classification of mapping classes}
We begin with Thurston’s classification of mapping classes\index{Thurston classification of mapping classes}.  
For any $[\omega] \in \mcg(\Sigma_{g,m})$, exactly one of the following holds:
\begin{enumerate}
\item $[\omega]$ is of finite order;
\item $[\omega]$ is \emph{reducible}\index{reducible}\index{Thurston classification of mapping classes!reducible}, that is, there exists a system $A = \{\alpha_i\}_{i=1}^n \subset \mathcal{S}$ such that $i(\alpha_i,\alpha_j) = 0$ for $i \ne j$ and $[\omega](A) = A$;
\item $[\omega]$ is \emph{pseudo-Anosov}\index{pseudo Anosov}\index{Thurston classification of mapping classes!pseudo Anosov}, that is, there exists a pair $(\lambda^s, \lambda^u)$ of transverse measured geodesic laminations and a constant $K > 1$ such that $[\omega](\lambda^s) = (1/K)\lambda^s$ and $[\omega](\lambda^u) = K\lambda^u$.
\end{enumerate}

In case (3), the projective classes $[\lambda^s]$ and $[\lambda^u]$ are the repelling and attracting fixed points, respectively, of the action of $[\omega]$ on $\pml_{g,m}$, in the sense that  
for any $x \in \teich_{g,m} \cup (\pml_{g,m} \setminus \{[\lambda^s], [\lambda^u]\})$,
\[
\lim_{n \to \infty} [\omega]^n(x) = [\lambda^u], \quad
\text{and} \quad
\lim_{n \to -\infty} [\omega]^n(x) = [\lambda^s].
\]
Moreover, $\lambda^s$ and $\lambda^u$ are uniquely ergodic  
(cf., e.g., \cite[Exposés 9, 12]{MR568308} and \cite{MR956596}).
Following McCarthy--Papadopoulos \cite{MR982564}, we call the pair $\{\lambda^u, \lambda^s\}$ the \emph{pseudo-Anosov pair}\index{pseudo-Anosov pair} associated with $[\omega]$.  
A measured geodesic lamination $\lambda$ whose projective class is fixed by a pseudo-Anosov mapping class is called a \emph{pseudo-Anosov lamination}\index{pseudo-Anosov lamination}.

\subsubsection{McCarthy--Papadopoulos classification}
\label{subsubsec:McP_classfication}
A subgroup $H$ is said to be \emph{reducible}\index{reducible}\index{subgroups of mapping class group!reducible} if it preserves a nonempty finite collection of disjoint simple closed curves $\mathcal{S} \subset \pml_{g,m}$. Otherwise, $H$ is called \emph{irreducible}\index{irreducible}\index{subgroups of mapping class group!irreducible}.

A subgroup $H$ of $\mcg(\Sigma_{g,m})$ is called \emph{dynamically irreducible}\index{dynamically irreducible}\index{subgroups of mapping class group!dynamically irreducible} if it has a unique nonempty minimal closed invariant subset in $\pml_{g,m}$. Otherwise, $H$ is said to be \emph{dynamically reducible}\index{dynamically reducible}\index{subgroups of mapping class group!dynamically reducible}.

A set of pseudo-Anosov mapping classes is said to be \emph{independent} if no two elements in the set have the same fixed point set in $\pml_{g,m}$. A subgroup $H$ of $\mcg(\Sigma_{g,m})$ is called \emph{sufficiently large}\index{sufficiently large}\index{subgroups of mapping class group!sufficiently large} if it contains an independent pair of pseudo-Anosov elements.

Let $H$ be a subgroup of $\mcg(\Sigma_{g,m})$.  
Let $\Lambda_0(H)$ denote the set of pseudo-Anosov laminations associated with pseudo-Anosov mapping classes in $H$, and let $\Lambda(H)$ be its closure.  
When $H$ is sufficiently large, $\Lambda(H)$ is the unique $H$-invariant minimal closed subset of $\pml_{g,m}$ (cf.\ \cite[Theorem 4.1]{MR982564}).  

Let $L = \{\lambda^u, \lambda^s\}$ be a pseudo-Anosov pair in $\ml_{g,m}$, and let $|L|$ denote the corresponding pair in $\pml_{g,m}$.  
If $H$ is a subgroup of the stabilizer of $|L|$ in $\mcg(\Sigma_{g,m})$, we say that $H$ is \emph{pseudo-Anosov stabilizing}\index{pseudo-Anosov stabilizing}\index{subgroups of mapping class group!pseudo-Anosov stabilizing}.  
If, in addition, $H$ contains an element that exchanges $[\lambda^u]$ and $[\lambda^s]$, we say that $H$ is of \emph{symmetric} type; otherwise, $H$ is of \emph{asymmetric} type.

McCarthy and Papadopoulos \cite{MR982564} gave the following classification of subgroups of $\mcg(\Sigma_{g,m})$ from a dynamical point of view.

\begin{theorem}[McCarthy--Papadopoulos classification]
\label{thm:M-P}
Let $H$ be a subgroup of $\mcg(\Sigma_{g,m})$.  
Then $H$ is dynamically irreducible if and only if it is of one of the following types:
\begin{itemize}
\item[{\rm (1)}]
$H$ is sufficiently large;
\item[{\rm (2)}]
$H$ is an infinite pseudo-Anosov stabilizing subgroup of symmetric type;
\item[{\rm (3)}]
$H$ is infinite and reducible, and $(g,m) = (1,1)$ or $(0,4)$.
\end{itemize}
Otherwise, $H$ is dynamically reducible if and only if it is of one of the following types:
\begin{itemize}
\item[{\rm (4)}]
$H$ is finite;
\item[{\rm (5)}]
$H$ is a pseudo-Anosov stabilizing subgroup of asymmetric type;
\item[{\rm (6)}]
$H$ is infinite and reducible, and $(g,m)$ is neither $(1,1)$ nor $(0,4)$.
\end{itemize}
\end{theorem}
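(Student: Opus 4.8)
The plan is to deduce the theorem from three assertions: (a) every subgroup $H$ of $\mcg(\Sigma_{g,m})$ belongs to exactly one of the six types; (b) types (1)--(3) are dynamically irreducible; and (c) types (4)--(6) are dynamically reducible. Since dynamical irreducibility and reducibility are mutually exclusive and exhaustive, the two biconditionals follow formally from (a)--(c): a dynamically irreducible $H$ cannot be of type (4)--(6) by (c), so it is of type (1)--(3), and the converse is (b); the reducible statement is symmetric. Thus the content lies entirely in (a)--(c).

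For (a) I would first split on whether $H$ contains a pseudo-Anosov element. If it does not, then by Thurston's classification every element of $H$ is of finite order or reducible, and the structural dichotomy that such a group is either finite or preserves a finite system of disjoint simple closed curves (cf.\ \cite{MR982564}) places $H$ in type (4) when finite, and otherwise in type (3) or (6) according to whether $(g,m)\in\{(1,1),(0,4)\}$. If $H$ does contain a pseudo-Anosov $[\omega]$ with pair $L=\{[\lambda^u],[\lambda^s]\}$, I would ask whether $H$ contains a pseudo-Anosov independent from $[\omega]$: if so, $H$ is sufficiently large, i.e.\ type (1); if not, every pseudo-Anosov in $H$ has fixed set $|L|$, and since $h[\omega]h^{-1}$ is pseudo-Anosov with fixed set $h(|L|)$ for each $h\in H$, we get $h(|L|)=|L|$, so $H$ stabilizes $|L|$ and is of type (2) or (5) according to symmetric or asymmetric type. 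Disjointness is then routine: a reducible group contains no pseudo-Anosov element, sufficiently large excludes pseudo-Anosov stabilizing, symmetric and asymmetric are complementary, and the $(g,m)$-condition separates (3) from (6).

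For (b), type (1) is immediate from the quoted fact that $\Lambda(H)$ is the unique minimal closed invariant set for sufficiently large $H$. For type (2), an infinite pseudo-Anosov stabilizing group contains a pseudo-Anosov element, because an infinite-order mapping class fixing both uniquely ergodic laminations $[\lambda^u]$ and $[\lambda^s]$ is itself pseudo-Anosov; the North--South dynamics recorded after Thurston's classification then forces the closure of every orbit to contain a point of $\{[\lambda^u],[\lambda^s]\}$, while the symmetric element interchanges the two, so $\{[\lambda^u],[\lambda^s]\}$ is the unique minimal set. For type (3), since any two distinct essential curves on $\Sigma_{1,1}$ or $\Sigma_{0,4}$ intersect, the invariant curve system is a single class $[\alpha]$ fixed by $H$, and an infinite subgroup of its stabilizer contains a power of the twist about $\alpha$, whose parabolic action on $\pml_{g,m}\cong\mathbb{S}^1$ collapses every orbit to $[\alpha]$; hence $\{[\alpha]\}$ is the unique minimal set.

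For (c), types (4) and (5) are immediate: a finite group acting on the infinite set $\pml_{g,m}$ has infinitely many finite, hence minimal, orbits, and an asymmetric pseudo-Anosov stabilizing group fixes $[\lambda^u]$ and $[\lambda^s]$ separately, giving two minimal sets. The substantive case is (6), where $H$ preserves a curve system $A$ with $3g-3+m\ge 2$, and I would produce two disjoint closed invariant sets. The simplex $\Delta_A\subset\pml_{g,m}$ of projective laminations supported on $A$ is invariant, and the twists about the curves of $A$ fix $\Delta_A$ pointwise, so $H$ acts on $\Delta_A$ through a finite permutation group; thus if $|A|\ge 2$ then $\dim\Delta_A\ge 1$ and $\Delta_A$ already contains infinitely many finite orbits. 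If $|A|=1$, then because $3g-3+m\ge 2$ the complement $\Sigma_{g,m}\setminus A$ has positive complexity and carries projective laminations disjoint from $A$; these form an invariant set disjoint from $\{[\alpha]\}$ that contains a minimal set by compactness, again yielding at least two. The main obstacle is the structural dichotomy used in (a): that a subgroup with no pseudo-Anosov element is finite or reducible. This Tits-alternative-type statement, together with the quoted uniqueness of $\Lambda(H)$ for sufficiently large $H$, is the deep input of \cite{MR982564}; the remaining steps reduce to bookkeeping with North--South and parabolic dynamics.
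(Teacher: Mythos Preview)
The paper does not give its own proof of this theorem: it is stated as a background result due to McCarthy and Papadopoulos and attributed directly to \cite{MR982564}, with no argument supplied. So there is nothing in the paper to compare your proposal against.

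Your sketch is a reasonable outline of the strategy one finds in \cite{MR982564}, and the reduction to (a)--(c) is clean. Two places would need tightening in a full write-up. First, in (b) type~(2) you assert that an infinite pseudo-Anosov stabilizing subgroup contains a pseudo-Anosov element; this is true, but it relies on the nontrivial fact that the stabilizer of a pseudo-Anosov pair is virtually infinite cyclic (so that an infinite subgroup must contain an element of infinite order, which is then forced to be pseudo-Anosov by Thurston's trichotomy since it fixes a filling lamination). Second, in (c) type~(6) with $|A|=1$, the set of projective laminations ``disjoint from $A$'' includes $[\alpha]$ itself, so it is not disjoint from $\{[\alpha]\}$ as you claim; you need instead to exhibit a closed $H$-invariant set in $\{[\mu]:i(\mu,\alpha)=0\}$ whose minimal subsets avoid $[\alpha]$, which takes a little more care (e.g.\ pass to the complementary subsurface and argue there). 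These are genuine but fixable gaps; the structural dichotomy you flag in (a) (no pseudo-Anosov $\Rightarrow$ finite or reducible) is indeed the substantial input.
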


Following McCarthy--Papadopoulos \cite{MR982564}, subgroups of types (2), (3), (4), or (5) are called \emph{elementary groups}\index{elementary group}\index{subgroups of mapping class group!elementary group}, since they are all virtually cyclic and belong to a slightly more general class, namely the virtually abelian groups. The action of a subgroup of type (6) can, in some cases, be reduced to cases of lower topological complexity (cf.\ \cite[Lemma 4.5]{MR982564}).

\subsubsection{Subgroups that are sufficiently large}
\label{subssub:subgroups_suf_large}
Suppose a subgroup $H$ of the mapping class group $\mcg(\Sigma_{g,m})$ is sufficiently large. In this case, the set $\Lambda(H)$ is called the \emph{limit set}\index{limit set}\index{subgroups of mapping class group!limit set} of $H$.  
Sufficiently large subgroups share several properties with non-elementary Kleinian groups.  
For instance, $\Lambda(H)$ is a perfect set, and either $\Lambda(H) = \pml_{g,m}$ or $\Lambda(H)$ has empty interior.  
Furthermore, by an argument analogous to that used in the theory of Kleinian groups, any subgroup $H_1 < H$ is also sufficiently large and satisfies $\Lambda(H_1) = \Lambda(H)$, provided that $H_1$ is either an infinite normal subgroup or a finite-index subgroup (see \cite[\S5]{MR982564}).  
Note, however, that in some cases a sufficiently large subgroup may contain a normal subgroup that is not itself sufficiently large (cf.~\cite[Remark in \S5]{MR982564}).

\begin{example}[McCarthy--Papadopoulos {\cite[\S5, Example 1]{MR982564}}]
\label{ex:McP}
The mapping class group $\mcg(\Sigma_{g,m})$ acts minimally on $\pml_{g,m}$ (cf.\ \cite[Expos\'e 6, \S VII]{MR568308}), and hence $\Lambda(\mcg(\Sigma_{g,m})) = \pml_{g,m}$, so $\mcg(\Sigma_{g,m})$ is sufficiently large.  
In particular, the Torelli group $\mathcal{I}_g$ of $\mcg(\Sigma_g)$ is also sufficiently large, and $\Lambda(\mathcal{I}_g) = \pml_{g}$.
\end{example}

\subsubsection{Region of discontinuity}
Let $H$ be a sufficiently large subgroup of $\mcg(\Sigma_{g,m})$. Following Masur \cite{MR837978} and McCarthy-Papadopoulos \cite[\S6]{MR982564}, we define
\[
Z\Lambda(H) = \{[\mu] \in \pml_{g,m} \mid i(\mu,\lambda) = 0 \text{ for some } [\lambda] \in \Lambda(H)\}.
\]
Then $Z\Lambda(H)$ is closed and $H$-invariant. If $\Lambda(H)$ is a proper subset of $\pml_{g,m}$, then so is $Z\Lambda(H)$ (cf.\ \cite[Proposition 6.1]{MR982564}).  
The complement $\Delta(H) = \pml_{g,m} \setminus Z\Lambda(H)$ is called the \emph{region of discontinuity}\index{region of discontinuity}\index{subgroups of mapping class group!region of discontinuity} of $H$.  
$H$ acts properly discontinuously on $\Delta(H)$ (\cite[Theorem 7.17]{MR982564}; see also \cite[Theorem 2.1]{MR837978}).

McCarthy and Papadopoulos observed that $Z\Lambda(H)$ contains the limit points of the action of $H$ on the Thurston compactification of $\teich_{g,m}$, in the sense that $[\lambda] \in \pml_{g,m}$ lies in $Z\Lambda(H)$ if there exist an infinite sequence $\{h_n\}_n$ of distinct elements of $H$ and $y \in \teich_{g,m} \cup \pml_{g,m}$ such that $h_n(y) \to [\lambda]$ (\cite[Proposition 8.1]{MR982564}).

Kent and Leininger extended McCarthy and Papadopoulos' result, proving that the action of $H$ is properly discontinuous on $\teich_{g,m} \cup \Delta(H)$ (cf.\ \cite[Theorem 2.1]{MR2465691}).

\begin{example}[Masur {\cite{MR837978}}]
\label{ex:Masur_Handle_body}
Let $H_g$ be a handlebody of genus $g$ and $\Sigma_g$ be regarded as the boundary $\partial H_g$.  
The handlebody group $\modu(H_g)$ of genus $g$ is the subgroup of $\mcg(\Sigma_g)$ consisting of homeomorphisms that extend to homeomorphisms of $H_g$.  
Masur studied the action of $\modu(H_g)$ and showed that it acts properly discontinuously on $\Delta(H)$.  
Moreover, the limit set $\Lambda(\modu(H_g))$ coincides with the closure of the set of simple closed curves in $\mathcal{S} \subset \pml_{g,m}$ that bound discs in $H_g$.  
Furthermore, $\Lambda(\modu(H_g))$ has empty interior.
\end{example}

\section{Function theory and Dynamics}
\label{sec:function_theory_dymamics}
Let $(X, \mathcal{B}, \mu)$ be a measure space.  
A measure $\mu$ on $X$ is said to be \emph{quasi-invariant}\index{quasi-invariant measure} under a measurable action of a group $G$ on $X$ if for every $g \in G$ and every measurable set $E \in \mathcal{B}$ with $\mu(E) = 0$, we have $\mu(g^{-1}(E)) = 0$.  
This is equivalent to the condition that the push-forward measure $g_*\mu$ is absolutely continuous with respect to $\mu$ for all $g \in G$.  

A measurable action of $G$ on a measure space $(X, \mathcal{B}, \mu)$, for which the measure $\mu$ is quasi-invariant, is said to be \emph{ergodic}\index{ergodic measure} if, for every $E \in \mathcal{B}$ such that the symmetric difference between $g(E)$ and $E$ is $\mu$-null for all $g \in G$, either $E$ or its complement $X \setminus E$ is $\mu$-null.

We are primarily concerned with the actions of subgroups of the mapping class group on $\pml_{g,m}$.
The Thurston measures $\PThursM^x$ (for $x \in \teich_{g,m}$) on $\pml_{g,m}$ are quasi-invariant under the action of $\mcg(\Sigma_{g,m})$.  
Indeed, let $[\omega] \in \mcg(\Sigma_{g,m})$ and $x \in \teich_{g,m}$. From \eqref{eq:invariance_ThP}, if $A \subset \pml_{g,m}$ is $\PThursM^x$-null, then
\[
\PThursM^x([\omega]^{-1}(A)) = \PThursM^{[\omega](x)}(A) = 0,
\]
since $\PThursM^x$ and $\PThursM^{[\omega](x)}$ are mutually absolutely continuous by \eqref{eq:invariance_ThP2}.
It is known that the action of $\mcg(\Sigma_{g,m})$ on $\pml_{g,m}$ is ergodic;  
see Masur~\cite{MR644018} and Rees~\cite{MR662738}.

\subsection{Ergodicity of the action of subgroups}
In this section, we discuss the ergodic action on $\pml_{g,m}$ from the point of view of function theory.

\subsubsection{Seidel's result}
Seidel~\cite{Seidel_metric} proved the following theorem:

\begin{theorem}[Seidel]
\label{thm:seidel}
Let $\Gamma$ be a Fuchsian group acting on $\mathbb{D}$. Then the action of $\Gamma$ on $\partial \mathbb{D}$ is ergodic\footnote{Seidel used the term “metrically transitive” to refer to ergodicity.} with respect to the Lebesgue measure on $\partial \mathbb{D}$ if and only if the Riemann surface $\mathbb{D}/\Gamma$ admits no non-constant bounded harmonic functions.
\end{theorem}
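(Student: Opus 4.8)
The plan is to build a single $\Gamma$-equivariant bijection between bounded harmonic functions on $\mathbb{D}$ and $L^\infty$ functions on $\partial\mathbb{D}$ via the Poisson integral, and then read off both sides of the equivalence as the statement that the $\Gamma$-invariant objects reduce to constants. First I would recall Fatou's theorem (\Cref{thm:Fatou1}): every bounded harmonic function $u$ on $\mathbb{D}$ has a non-tangential boundary limit $u^* \in L^\infty(\partial\mathbb{D})$ and is recovered as $u = \mathbb{P}[u^*]$, where $\mathbb{P}[f](z) = \int_0^{2\pi} f(e^{i\theta})\frac{1-|z|^2}{|z-e^{i\theta}|^2}\frac{d\theta}{2\pi}$. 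This makes $u \mapsto u^*$ injective (if $u^* = 0$ a.e.\ then $u = \mathbb{P}[0] = 0$) and, together with the standard fact that $\mathbb{P}[f]$ is bounded harmonic with $(\mathbb{P}[f])^* = f$ a.e., surjective onto $L^\infty(\partial\mathbb{D})$. Thus $u \mapsto u^*$ is a linear bijection from the bounded harmonic functions on $\mathbb{D}$ onto $L^\infty(\partial\mathbb{D})$ which carries constants to constants.

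The key step is to verify that this bijection is equivariant for the two $\Gamma$-actions, where $\gamma \in \Gamma$ acts on functions on $\mathbb{D}$ by $u \mapsto u\circ\gamma$ and on $L^\infty(\partial\mathbb{D})$ through the induced boundary diffeomorphism. Since each $\gamma$ is a conformal automorphism of $\mathbb{D}$, harmonicity and boundedness of $u\circ\gamma$ are immediate; the substance is the Poisson-kernel covariance $P(\gamma z,\gamma\zeta)\,|\gamma'(\zeta)| = P(z,\zeta)$ for $\zeta\in\partial\mathbb{D}$, where $P(z,\zeta) = (1-|z|^2)/|z-\zeta|^2$. Substituting $\zeta = \gamma(\eta)$ in the Poisson integral gives $\mathbb{P}[f]\circ\gamma = \mathbb{P}[f\circ\gamma]$, and hence $(u\circ\gamma)^* = u^*\circ\gamma$ a.e. Consequently the bijection restricts to an isomorphism between the $\Gamma$-invariant bounded harmonic functions on $\mathbb{D}$ and the $\Gamma$-invariant elements of $L^\infty(\partial\mathbb{D})$.

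Next I would identify these invariant spaces with the quantities in the statement. Pulling back along the universal covering $\mathbb{D}\to R := \mathbb{D}/\Gamma$ identifies bounded harmonic functions on $R$ with $\Gamma$-invariant bounded harmonic functions on $\mathbb{D}$, again sending constants to constants. On the boundary side, a measurable $E\subset\partial\mathbb{D}$ is $\Gamma$-invariant up to null sets exactly when $\mathbf 1_E$ is a $\Gamma$-invariant element of $L^\infty(\partial\mathbb{D})$, and ergodicity is precisely the assertion that every such $\mathbf 1_E$ is a.e.\ constant, i.e.\ that $L^\infty(\partial\mathbb{D})^\Gamma$ consists only of constants. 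Chaining the isomorphisms yields: $R$ admits no non-constant bounded harmonic function $\iff$ every $\Gamma$-invariant bounded harmonic function on $\mathbb{D}$ is constant $\iff$ every $\Gamma$-invariant element of $L^\infty(\partial\mathbb{D})$ is constant $\iff$ the $\Gamma$-action on $\partial\mathbb{D}$ is ergodic. Concretely: if ergodicity fails, pick an invariant $E$ with $0<|E|<2\pi$; then $\mathbb{P}[\mathbf 1_E]$ descends to a bounded harmonic function on $R$ that is non-constant by injectivity of $\mathbb{P}$. Conversely a bounded harmonic $u$ on $R$ lifts to a $\Gamma$-invariant $\tilde u$, whose boundary value $\tilde u^*$ is $\Gamma$-invariant, hence a.e.\ equal to a constant $c$ by ergodicity, so $\tilde u = \mathbb{P}[c] = c$ is constant.

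The main obstacle is the equivariance step: establishing the Poisson-kernel covariance and confirming that the boundary extension of each $\gamma$ is absolutely continuous, so that it preserves Lebesgue-null sets and the notions of $\Gamma$-invariant $L^\infty$ function and $\Gamma$-invariant measurable set coincide. Once this is in place, the remainder is a formal consequence of the bijectivity in Fatou's theorem. I would also need to be careful with null sets, since invariance of $u^*$ and of $E$ holds only almost everywhere; as $\Gamma$ is countable, one may pass to a single conull subset of $\partial\mathbb{D}$ on which all the relevant identities hold simultaneously.
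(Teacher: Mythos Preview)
Your proof is correct and follows essentially the same approach as the paper: both directions use the Fatou/Poisson correspondence to pass between $\Gamma$-invariant bounded harmonic functions on $\mathbb{D}$ and $\Gamma$-invariant boundary data, with the non-ergodic direction handled by taking $\mathbb{P}[\mathbf{1}_E]$ for an invariant set $E$ of intermediate measure. Your presentation is slightly more structural (packaging the argument as a single equivariant bijection rather than arguing the two implications separately) and you make explicit the equivariance and null-set bookkeeping that the paper leaves implicit, but the underlying mechanism is identical.
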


To connect with the next section, we briefly recall the proof of Seidel's theorem.

Suppose that the action of $\Gamma$ on $\partial \mathbb{D}$ is ergodic.  
Assume, for the sake of contradiction, that the Riemann surface $\mathbb{D}/\Gamma$ admits a non-constant bounded harmonic function $u$. Let $\tilde{u}$ denote the lift of $u$ to $\mathbb{D}$. Then $\tilde{u}$ is a bounded harmonic function on $\mathbb{D}$ that is $\Gamma$-invariant.  
Hence, its boundary value function $\tilde{u}^*$ is a bounded measurable function on $\partial \mathbb{D}$ that is also $\Gamma$-invariant. By the ergodicity of the action, $\tilde{u}^*$ must be constant almost everywhere.  
On the other hand, by the Fatou theorem~\eqref{eq:PIF}, we have
\[
\tilde{u}(z) = \int_0^{2\pi} \frac{1 - |z|^2}{|z - e^{i\theta}|^2} \tilde{u}^*(e^{i\theta}) \, \frac{d\theta}{2\pi}
\]
for $z \in \mathbb{D}$. It follows that $\tilde{u}$, and hence $u$, is constant, contradicting our assumption.

Conversely, suppose that the action of $\Gamma$ is not ergodic.  
Then there exists a $\Gamma$-invariant measurable subset $E \subset \partial \mathbb{D}$ such that neither $E$ nor its complement $\partial \mathbb{D} \setminus E$ has Lebesgue measure zero.  
Let $\boldsymbol{1}_E$ denote the characteristic function of $E$, and define
\begin{equation}
\label{eq:Seidel_characteristic}
\tilde{u}(z) = \int_0^{2\pi} \frac{1 - |z|^2}{|z - e^{i\theta}|^2} \boldsymbol{1}_E(e^{i\theta}) \, \frac{d\theta}{2\pi}.
\end{equation}
Then $\tilde{u}$ is a bounded harmonic function on $\mathbb{D}$ that is $\Gamma$-invariant and non-constant.  
Therefore, the Riemann surface $\mathbb{D}/\Gamma$ admits a non-constant bounded harmonic function.

\begin{remark}
Seidel's result also holds for higher-dimensional hyperbolic spaces; see \cite[Theorem 6.2.2]{MR1041575}.
\end{remark}


\subsubsection{Ergodicity of subgroups of $\mcg(\Sigma_{g,m})$}
Motivated by Seidel's result, we obtain the following theorem.

\begin{theorem}
\label{thm:main4}
Let $H$ be a subgroup of the mapping class group of $\Sigma_{g,m}$.
If the action of $H$ on $\pml_{g,m}$ is ergodic with respect to the Thurston measure, then $\teich_{g,m}/H$ admits no non-constant bounded pluriharmonic functions.
\end{theorem}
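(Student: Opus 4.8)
The plan is to run the forward direction of Seidel's argument recalled after \Cref{thm:seidel}, with the classical Fatou theorem replaced by the boundary-value machinery for $\teich_{g,m}$ assembled in \S\ref{sec:Function_theory_T}. Suppose $u$ is a bounded pluriharmonic function on $\teich_{g,m}/H$. First I would lift it to a bounded function $\tilde u$ on $\teich_{g,m}$. Since $H$ acts biholomorphically (\S\ref{subsec:Action_MCG}), pluriharmonicity is preserved, so $\tilde u$ is a bounded pluriharmonic function on $\teich_{g,m}$ satisfying $\tilde u\circ[\omega]=\tilde u$ for every $[\omega]\in H$. The goal is then to show that $\tilde u$ is constant, which forces $u$ to be constant.

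Next I would pass to boundary values. By the Radial limit theorem (\Cref{thm:radial_limit}) there is a set $\mathcal{E}_0=\mathcal{E}_0(\tilde u)\subset\pml_{g,m}$ of full Thurston measure on which the radial limit $\tilde u^*$ exists and is independent of the basepoint; since $\mcg(\Sigma_{g,m})$ is countable we may take $\mathcal{E}_0$ to be $H$-invariant. The heart of the argument is the equivariance \eqref{eq:invariance_radial_limits}: applying it to $\tilde u$ and using $\tilde u\circ[\omega]=\tilde u$ yields $\tilde u^* = (\tilde u\circ[\omega])^* = \tilde u^*\circ[\omega]$ for every $[\omega]\in H$. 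Because $H$ is countable, these identities hold simultaneously outside a single Thurston-null set, so $\tilde u^*$ is an $H$-invariant bounded measurable function on $\pml_{g,m}$.

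Now ergodicity enters. Since the action of $H$ on $(\pml_{g,m},\PThursM^{x})$ is ergodic, every $H$-invariant bounded measurable function is constant almost everywhere: for each $c$ the superlevel set $\{\tilde u^*>c\}$ is $H$-invariant up to null sets, hence has measure $0$ or full measure, and taking $c_0=\sup\{c:\PThursM^{x}(\{\tilde u^*>c\})=1\}$ gives $\tilde u^*= c_0$ a.e. Finally I would feed this back through the Poisson integral formula (\Cref{thm:PIF_bdd_pluri_harmonic_T}):
\[
\tilde u(x)=\int_{\pml_{g,m}} \tilde u^*([\lambda])\left(\frac{\ext_{x_0}(\lambda)}{\ext_x(\lambda)}\right)^{3g-3+m} d\PThursM^{x_0}([\lambda])
= c_0\int_{\pml_{g,m}}\left(\frac{\ext_{x_0}(\lambda)}{\ext_x(\lambda)}\right)^{3g-3+m} d\PThursM^{x_0}([\lambda]) = c_0,
\]
where the last equality is \eqref{eq:invariance_ThP2} with $A=\pml_{g,m}$, since each $\PThursM^{x}$ is a probability measure. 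Thus $\tilde u\equiv c_0$ and $u$ is constant.

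I expect the only genuinely delicate point to be the equivariance step: one must ensure that the merely a.e.-defined radial limit $\tilde u^*$ is honestly $H$-invariant, which requires both that the exceptional set $\mathcal{E}_0$ be chosen $H$-invariant and that \eqref{eq:invariance_radial_limits} be applied over all of the countable group $H$ at once, so that the resulting single null set does not obstruct the ergodicity dichotomy. Everything else—the lift, the superlevel-set reduction, and the normalization of the Poisson kernel—is routine given the results of \S\ref{sec:Function_theory_T}.
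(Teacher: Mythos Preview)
Your proof is correct and follows essentially the same approach as the paper's: the paper argues by contrapositive, citing \Cref{coro:radial_limit_non-constant} to say that a non-constant $\tilde u$ has non-constant radial limit $\tilde u^*$, whereas you unwind that corollary by applying the Poisson integral formula directly to the constant boundary value. The substance---lift, take $H$-invariant radial limits via \eqref{eq:invariance_radial_limits}, apply ergodicity, recover $\tilde u$ from $\tilde u^*$---is the same.
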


This result is an immediate consequence of \Cref{coro:radial_limit_non-constant}, together with an application of Seidel's argument \cite{Seidel_metric}, as discussed in the preceding section.

\begin{proof}
Suppose there exists a non-constant bounded pluriharmonic function $u$ on $\teich_{g,m}/H$. Then its lift $\tilde{u}$ to $\teich_{g,m}$ is a non-constant, $H$-invariant, bounded pluriharmonic function on $\teich_{g,m}$.
The boundary value $\tilde{u}^*$ of $\tilde{u}$ can then be regarded as an $H$-invariant measurable function on $\pml_{g,m}$ (cf.~\eqref{eq:invariance_radial_limits}).
By \Cref{coro:radial_limit_non-constant}, this boundary value $\tilde{u}^*$ is non-constant as a measurable function.
Therefore, the action of $H$ on $\pml_{g,m}$ is not ergodic.
\end{proof}

\begin{remark}
\label{remark:2}
The converse of \Cref{thm:main4} is non-trivial, in contrast to the one-dimensional case. Indeed, the difficulty in our setting arises from the fact that the Poisson kernel is not pluriharmonic (cf.~\cite[Remark 12.1]{MR4633651}), whereas the Poisson kernel in \eqref{eq:Seidel_characteristic} is harmonic in the variable $z \in \mathbb{D}$. Therefore, Seidel's argument cannot be applied directly.
\end{remark}

\begin{remark}
\label{remark:1}
In \cite{MR4830064}, the author investigates the non-ergodicity of the action of the Torelli group $\mathcal{I}_g$ on $\pml_g$ for $g \ge 2$. In the course of the discussion, the author essentially observes that the ergodicity of subgroup actions on $\pml_{g,m}$ implies the non-existence of non-constant bounded holomorphic functions on the quotient space. \Cref{thm:main4} provides a refinement of this observation.
\end{remark}

%

\subsubsection{Ergodic decomposition}  
\label{subsubsec:ergodic_decom}

It is straightforward to verify that if a subgroup $H$ of the mapping class group of $\Sigma_{g,m}$ acts ergodically on $\pml_{g,m}$, then its action is minimal in the sense that the limit set $\Lambda(H)$, defined in \S\ref{subsubsec:McP_classfication}, coincides with the entire space $\pml_{g,m}$. This observation implies the following:

\begin{proposition}
\label{prop:McP_ergodic}
Let $H$ be a subgroup of $\mcg(\Sigma_{g,m})$. If the action of $H$ on $\pml_{g,m}$ is ergodic, then $\Lambda(H) = \pml_{g,m}$, and $H$ is sufficiently large and hence dynamically irreducible.
\end{proposition}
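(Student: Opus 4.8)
The plan is to prove the single substantive assertion that ergodicity forces $\Lambda(H)=\pml_{g,m}$; the two remaining conclusions are then formal. Indeed, if $\Lambda(H)=\pml_{g,m}$, then $\Lambda_0(H)$ cannot be finite (its closure is the uncountable sphere $\pml_{g,m}$), so $H$ contains two pseudo-Anosov classes with distinct fixed-point pairs, i.e.\ an independent pair; hence $H$ is sufficiently large, and a sufficiently large group is dynamically irreducible by \Cref{thm:M-P}. So everything reduces to the minimality statement.

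The engine of the argument is that an ergodic action of a countable group, with respect to a quasi-invariant measure of full support, has almost every orbit dense. First I would record that $\PThursM^{x}$ has full support on $\pml_{g,m}$, since it is a Lebesgue-type cone measure on the piecewise-linear sphere, so every nonempty open set has positive mass. Then, fixing a countable basis $\{U_n\}$ of $\pml_{g,m}$, the saturation $V_n:=\bigcup_{h\in H}h^{-1}(U_n)$ is open, exactly $H$-invariant, and of positive measure (it contains $U_n$); by ergodicity each $V_n$ is conull, so $\bigcap_n V_n$ is conull and consists of points whose orbit meets every $U_n$, i.e.\ has dense orbit. In particular a dense orbit exists.

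The next step is to pass from a dense orbit to $\Lambda(H)=\pml_{g,m}$. Since $\pml_{g,m}$ is perfect, a dense orbit $H[\mu_0]$ accumulates at every point: for each $[\lambda]$ there are distinct $h_n\in H$ with $h_n[\mu_0]\to[\lambda]$. By the McCarthy--Papadopoulos description of the limit points of the $H$-action (their Proposition~8.1, recalled in \S\ref{subsec:McP_class}), every such $[\lambda]$ lies in $Z\Lambda(H)$, whence $Z\Lambda(H)=\pml_{g,m}$. Finally I would feed in that, by Masur's theorem, the Thurston measure is carried by uniquely ergodic laminations, and that a uniquely ergodic $\nu$ satisfies $i(\nu,\lambda)=0\Rightarrow[\lambda]=[\nu]$ (the defining property in \S\ref{subsub:uel}): then $Z\Lambda(H)=\pml_{g,m}$ says each uniquely ergodic $[\nu]$ is disjoint from some $[\lambda]\in\Lambda(H)$, forcing $[\nu]=[\lambda]\in\Lambda(H)$; since such classes are dense and $\Lambda(H)$ is closed, $\Lambda(H)=\pml_{g,m}$.

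The hard part is justifying the confinement ``limit points of the action lie in $Z\Lambda(H)$'' for an \emph{a priori} arbitrary $H$, rather than a sufficiently large one. This is transparent for the elementary classes of \Cref{thm:M-P}: finite groups have finite orbits, and pseudo-Anosov--stabilizing or (for $(g,m)=(1,1),(0,4)$) reducible groups are virtually cyclic with north--south or parabolic dynamics, so their orbit-accumulation is a finite set; in each case almost every orbit is non-dense, contradicting the previous paragraph. The genuine obstacle is an infinite reducible subgroup of higher complexity (type (6)), where one must show that the orbit of a generic uniquely ergodic lamination still cannot be dense, equivalently that its accumulation is confined to the proper null locus $Z\Lambda(H)$, which misses the full-measure uniquely ergodic laminations. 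To keep the logic non-circular I would handle these cases first, concluding that an ergodic $H$ is sufficiently large, and only then invoke the sufficiently-large form of the limit-point statement; the type~(6) confinement, via the reduction to lower-complexity subsurfaces (McCarthy--Papadopoulos, Lemma~4.5), is where the real work lies.
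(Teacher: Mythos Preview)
The paper offers no argument beyond the sentence ``It is straightforward to verify\ldots,'' so there is no proof to compare against; your proposal is already far more detailed than what the paper supplies, and its overall architecture is sound. One simplification: once $H$ is known to be sufficiently large, you do not need the detour through dense orbits, the limit-point confinement in $Z\Lambda(H)$, and the uniquely ergodic locus. If $\Lambda(H)\neq\pml_{g,m}$ then the region of discontinuity $\Delta(H)$ is a nonempty open set on which $H$ acts properly discontinuously; splitting a small wandering ball there into two halves of positive measure and saturating each yields two disjoint $H$-invariant open sets, contradicting ergodicity directly.

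Your handling of type~(6) contains a slip worth flagging. A reducible subgroup contains no pseudo-Anosov element (such an element would have to preserve the invariant multicurve), so for type~(6) one has $\Lambda(H)=\varnothing$ and hence $Z\Lambda(H)=\varnothing$; the phrase ``accumulation confined to $Z\Lambda(H)$'' is therefore vacuous in this case. The natural confinement target is rather $Z(c)=\{[\lambda]:i(\lambda,c)=0\}$ for the invariant multicurve $c$, and even that claim requires an argument (one must exclude bounded $H$-orbits in $\ml_{g,m}$ away from $Z(c)$). A cleaner way to dispatch most of type~(6) is to build a nonconstant invariant function: when $c=c_1\cup\cdots\cup c_k$ with $k\ge 2$, the map $[\lambda]\mapsto\prod_j i(\lambda,c_j)\big/\big(\sum_j i(\lambda,c_j)\big)^k$ is continuous, homogeneous of degree zero, symmetric in the $c_j$ (hence $H$-invariant), and visibly nonconstant on the full-measure set where the denominator is positive, so $H$ is not ergodic. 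Only the single-curve case $k=1$ then genuinely requires the subsurface reduction you invoke.
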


Note that the converse of \Cref{prop:McP_ergodic} does not hold. As discussed in \Cref{ex:McP}, when $g \geq 2$, the Torelli group $\mathcal{I}_g$ satisfies $\Lambda(\mathcal{I}_g)=\pml_{g}$  but does not act ergodically.

Let $H$ be a sufficiently large subgroup of $\mcg(\Sigma_{g,m})$. Suppose that $\Lambda(H) = \pml_{g,m}$, but the action of $H$ on $\pml_{g,m}$ is not ergodic. Then, by the Ergodic Decomposition Theorem~\cite{MR1784210}, there exist a standard Borel space $(Y, \mathcal{B})$, a probability measure $\nu$ on $(Y, \mathcal{B})$, and a family $\{p_y \mid y \in Y\}$ of probability measures on $(\pml_{g,m}, \PThursM^{x_0})$ satisfying the following properties:

\begin{enumerate}
\item
For every Borel set $B \subset \pml_{g,m}$, the map $Y \ni y \mapsto p_y(B)$ is a Borel function, and
\[
\PThursM^{x_0}(B) = \int_Y p_y(B) \, d\nu(y);
\]
\item
For every $y \in Y$, the measure $p_y$ is quasi-invariant and ergodic under the action of $H$;
\item
If $y, y' \in Y$ and $y \ne y'$, then $p_y$ and $p_{y'}$ are mutually singular.
\end{enumerate}

\begin{problem}
\label{problem:erg}
Study the ergodic decomposition for the action of the Torelli group on $\pml_g$.
\end{problem}

For instance, in the discussion given in \cite{MR4830064}, the author considers the radial limit function of the period map from $\teich_{g}$ to the Siegel upper half-plane.  
Since the radial limit function is a bounded $\mathcal{I}_g$-invariant measurable function on $\pml_g$, the decomposition of $\pml_g$ into the level sets of this function yields a measurable partition that reflects the dynamical properties of the Torelli group.

\subsection{Horospherical limit points}
\label{subsec:Horospherical_limit_points}

In his celebrated paper \cite{MR624833}, Sullivan developed a theory concerning the ergodic action of discrete subgroups of the isometry group at infinity in hyperbolic space.  
The purpose of this section is to explore analogues of Sullivan's results for subgroups of the mapping class group.

We begin by introducing notions from measurable dynamics. For reference, see \cite[Chapter 2, \S6.1]{MR1041575}.

Let $H$ be a subgroup of the mapping class group of $\Sigma_{g,m}$.  
A set $W \subset \pml_{g,m}$ is called a \emph{wandering set}\index{wandering set} if $\PThursM^{x_0}(W \cap \omega(W)) = 0$ for every $\omega \in H$.  
If the action of $H$ admits a wandering set of positive measure, then $H$ is said to be \emph{dissipative}\index{dissipative}\index{subgroups of mapping class group!dissipative}; otherwise, it is called \emph{conservative}\index{conservative}\index{subgroups of mapping class group!conservative}.  
Moreover, $H$ is said to be \emph{totally dissipative}\index{totally dissipative}\index{subgroups of mapping class group!totally dissipative} if there exists a wandering set $W$ such that
\[
W^* = \bigcup_{\omega \in H} \omega(W)
\]
satisfies $\PThursM^{x_0}(\pml_{g,m} \setminus W^*) = 0$.

We now define small and big horospherical limit points, following the approach of Tukia \cite{MR1469798}:

\begin{definition}[Small and Big Horospherical Limit Sets]
Fix a base point $x_0 \in \teich_{g,m}$, and let $H$ be a subgroup of the mapping class group.
\begin{itemize}
\item
A point $[\lambda] \in \pml_{g,m}$ is called a big horospherical limit point\index{big horospherical limit point}\index{subgroups of mapping class group!big horospherical limit point} of $H$ if there exists a sequence $\{\omega_n\} \subset H$ such that
\begin{equation}
\label{eq:horo_definition1}
\frac{\ext_{\omega_n(x_0)}(\lambda)}{\ext_{x_0}(\lambda)}
\end{equation}
remains bounded above, and $d_T(x_0, \omega_n(x_0)) \to \infty$ as $n \to \infty$.
\item
A point $[\lambda] \in \pml_{g,m}$ is called a small horospherical limit point\index{small horospherical limit point}\index{subgroups of mapping class group!small horospherical limit point} of $H$ if there exists a sequence $\{\omega_n\} \subset H$ such that
\begin{equation}
\label{eq:horo_definition2}
\frac{\ext_{\omega_n(x_0)}(\lambda)}{\ext_{x_0}(\lambda)} \to 0
\end{equation}
as $n \to \infty$.
\end{itemize}
\end{definition}

Following Matsuzaki \cite{MR2038133}, we denote by $\Lambda_h(H)$ and $\Lambda_H(H)$ the sets of small and big horospherical limit points of $H$, respectively, and refer to them as the \emph{small} and \emph{big horospherical limit sets}\index{horospherical limit sets}\index{subgroups of mapping class group!horospherical limit set}. Clearly, $\Lambda_h(H) \subset \Lambda_H(H)$ for any subgroup $H \subset \mcg(\Sigma_{g,m})$.

The conditions in \eqref{eq:horo_definition1} and \eqref{eq:horo_definition2} are equivalent to the statements that $\ext_{\omega_n(x_0)}(\lambda)$ remains bounded above and tends to zero, respectively, as $n \to \infty$.  
These quotients are introduced in order to define a function on $\pml_{g,m}$.  
Although one could alternatively define horospherical limit points using hyperbolic lengths, we adopt the extremal length formulation due to its connection with the Poisson kernel appearing in the expression in \Cref{thm:PIF_bdd_pluri_harmonic_T}.

By the quasiconformal invariance of extremal lengths \eqref{eq:K-qc-extremal_length}, we have
\begin{align*}
\frac{\ext_{\omega(x_2)}(\lambda)}{\ext_{x_2}(\lambda)}
&\le
e^{2 d_T(\omega(x_1), \omega(x_2)) + 2 d_T(x_1, x_2)} \frac{\ext_{\omega(x_1)}(\lambda)}{\ext_{x_1}(\lambda)} \\
&= e^{4 d_T(x_1, x_2)} \frac{\ext_{\omega(x_1)}(\lambda)}{\ext_{x_1}(\lambda)}
\end{align*}
for any $x_1, x_2 \in \teich_{g,m}$ and $\omega \in H$.  
Therefore, the definitions of small and big horospherical limit points are independent of the choice of the base point $x_0 \in \teich_{g,m}$.

The horospherical limit sets $\Lambda_h(H)$ and $\Lambda_H(H)$ are invariant under the action of $H$.  
We verify this only for $\Lambda_h(H)$; the argument for $\Lambda_H(H)$ is analogous.  
Let $[\lambda] \in \Lambda_h(H)$ and $h \in H$.  
By definition, there exists a sequence $\{\omega_n\} \subset H$ such that
\[
\frac{\ext_{\omega_n(x_0)}(\lambda)}{\ext_{x_0}(\lambda)} \to 0 \quad \text{as } n \to \infty.
\]
Define $\omega'_n = h \omega_n h^{-1} \in H$ for each $n \in \mathbb{N}$. Then,
\[
\frac{\ext_{\omega'_n(h(x_0))}(h(\lambda))}{\ext_{h(x_0)}(h(\lambda))}
= \frac{\ext_{h \omega_n(x_0)}(h(\lambda))}{\ext_{h(x_0)}(h(\lambda))}
= \frac{\ext_{\omega_n(x_0)}(\lambda)}{\ext_{x_0}(\lambda)}
\to 0 \quad \text{as } n \to \infty.
\]
Hence $h([\lambda]) = [h(\lambda)] \in \Lambda_h(H)$, as claimed.

The following is an analogue of a part of Sullivan's dichotomy (cf.~\cite[Theorems III, IV]{MR624833}).

\begin{proposition}[Conservativity on $\Lambda_H$]
\label{prop:Wandering_Dissipative}
Suppose that $H$ is a subgroup of the mapping class group of $\Sigma_{g,m}$.  
Let $W$ be a wandering set for the action of $H$. Then,
$$
\PThursM^{x_0}\!\left(\Lambda_H(H) \cap W\right) = 0.
$$
In particular, the action of $H$ on the big horospherical limit set $\Lambda_H(H)$ is conservative in the sense that $\Lambda_H(H)$ contains no wandering set of positive measure.
\end{proposition}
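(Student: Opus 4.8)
The plan is to run a summability (Borel--Cantelli) argument built on the explicit Radon--Nikodym cocycle furnished by \eqref{eq:invariance_ThP} and \eqref{eq:invariance_ThP2}, in direct analogy with Sullivan's proof that the conservative part of the boundary action agrees with the horospherical limit set up to null sets. Fix the wandering set $W$, reading the wandering condition in its usual sense, namely $\PThursM^{x_0}(W\cap\omega(W))=0$ for all $\omega\in H$ with $\omega\neq\mathrm{id}$. The first step is to record that the translates $\{\omega(W)\}_{\omega\in H}$ are pairwise essentially disjoint: for $\omega_1\neq\omega_2$ one has $\omega_1(W)\cap\omega_2(W)=\omega_1\big(W\cap(\omega_1^{-1}\omega_2)(W)\big)$, which is $\PThursM^{x_0}$-null since $\omega_1^{-1}\omega_2\neq\mathrm{id}$ and $\PThursM^{x_0}$ is quasi-invariant, so $\omega_1$ preserves null sets. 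As $H$ is countable and $\PThursM^{x_0}$ is a probability measure, countable additivity then gives $\sum_{\omega\in H}\PThursM^{x_0}(\omega(W))\le 1$.

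Second, I would rewrite each summand as an integral over $W$ of a power of a ratio of extremal lengths --- precisely the Poisson kernel of \Cref{thm:PIF_bdd_pluri_harmonic_T}. Applying \eqref{eq:invariance_ThP} with the mapping class $\omega^{-1}$ and $A=W$ yields $\PThursM^{x_0}(\omega(W))=\PThursM^{\omega^{-1}(x_0)}(W)$, and then \eqref{eq:invariance_ThP2} gives
\[
\PThursM^{x_0}(\omega(W))
=\PThursM^{\omega^{-1}(x_0)}(W)
=\int_{W}\left(\frac{\ext_{x_0}(\mu)}{\ext_{\omega^{-1}(x_0)}(\mu)}\right)^{3g-3+m}d\PThursM^{x_0}(\mu).
\]
Summing over $\omega\in H$, invoking Tonelli since every integrand is nonnegative, and reindexing $\eta=\omega^{-1}$, the finiteness from the first step forces, for $\PThursM^{x_0}$-almost every $\mu\in W$,
\[
\sum_{\eta\in H}\left(\frac{\ext_{x_0}(\mu)}{\ext_{\eta(x_0)}(\mu)}\right)^{3g-3+m}<\infty.
\]

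Third, I would convert this summability into the desired null statement. Convergence of a nonnegative series over the countable index set $H$ implies that for every $M>0$ only finitely many $\eta\in H$ satisfy $\ext_{\eta(x_0)}(\mu)/\ext_{x_0}(\mu)\le M$. On the other hand, if such a $\mu$ lay in $\Lambda_H(H)$, the definition would supply a sequence $\{\omega_n\}\subset H$ with $\ext_{\omega_n(x_0)}(\mu)/\ext_{x_0}(\mu)$ bounded above and $d_T(x_0,\omega_n(x_0))\to\infty$; the growth condition forces the $\omega_n$ to be infinitely many distinct elements, producing infinitely many $\eta$ with the bounded ratio --- a contradiction. Hence almost every $\mu\in W$ lies outside $\Lambda_H(H)$, i.e.\ $\PThursM^{x_0}(\Lambda_H(H)\cap W)=0$, and the ``in particular'' clause follows at once by applying this to any wandering $W\subset\Lambda_H(H)$.

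The disjointness bookkeeping and the change-of-basepoint identities are routine, being already packaged in \eqref{eq:invariance_ThP}--\eqref{eq:invariance_ThP2}. I expect the only genuinely delicate point to be the final quantifier matching: one must verify that the tail condition coming from an absolutely convergent Poincar\'e-type series (finitely many orbit points with extremal-length ratio below any fixed bound) is exactly the logical negation of membership in $\Lambda_H(H)$ (infinitely many orbit points with ratio below some fixed bound), and in particular that $d_T(x_0,\omega_n(x_0))\to\infty$ really guarantees infinitely many distinct group elements. Since the whole dichotomy is concentrated at this step, I would state it explicitly rather than treat it as automatic.
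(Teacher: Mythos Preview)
Your proposal is correct and follows essentially the same route as the paper: both use the wandering hypothesis together with \eqref{eq:invariance_ThP}--\eqref{eq:invariance_ThP2} to bound $\sum_{\omega\in H}\PThursM^{x_0}(\omega(W))$ by $1$, deduce almost-everywhere convergence of the Poincar\'e-type series $\sum_{\omega}\bigl(\ext_{x_0}(\lambda)/\ext_{\omega^{-1}(x_0)}(\lambda)\bigr)^{3g-3+m}$ on $W$, and then observe that membership in $\Lambda_H(H)$ would force infinitely many terms bounded below. Your write-up is in fact slightly more careful than the paper's on two points---restricting the wandering condition to $\omega\neq\mathrm{id}$, and spelling out why $d_T(x_0,\omega_n(x_0))\to\infty$ forces infinitely many distinct group elements---both of which are worth keeping.
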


\begin{proof}
The conclusion is immediate if $W$ is null. Suppose instead that $\PThursM^{x_0}(W) > 0$.  
Then, from \eqref{eq:invariance_ThP} and \eqref{eq:invariance_ThP2}, we have
\begin{align*}
1
&\ge \PThursM^{x_0}\left(\bigcup_{\omega \in H} \omega(W)\right)
= \sum_{\omega \in H} \PThursM^{x_0}\left(\omega(W)\right)
= \sum_{\omega \in H} \PThursM^{\omega^{-1}(x_0)}(W) \\
&= \int_W \left( \sum_{\omega \in H}
\left( \frac{\ext_{x_0}(\lambda)}{\ext_{\omega^{-1}(x_0)}(\lambda)} \right)^{3g - 3 + m}
\right) d\PThursM^{x_0}([\lambda]).
\end{align*}
Therefore, the sum
\begin{equation}
\label{eq:converge_sum}
\sum_{\omega \in H}
\left( \frac{\ext_{x_0}(\lambda)}{\ext_{\omega^{-1}(x_0)}(\lambda)} \right)^{3g - 3 + m}
\end{equation}
converges for $\PThursM^{x_0}$-almost every $[\lambda] \in W$.

Now, suppose $[\lambda] \in \Lambda_H(H) \cap W$.  
By the definition of the big horospherical limit set, there exist a constant $M > 0$ and a sequence $\{\omega_n\}_n \subset H$ such that
\[
\ext_{\omega_n^{-1}(x_0)}(\lambda) \le M \quad \text{for all } n \in \mathbb{N},
\]
and $d_T(x_0, \omega_n^{-1}(x_0)) \to \infty$ as $n \to \infty$. Then,
\begin{align*}
\frac{\ext_{x_0}(\lambda)}{\ext_{\omega_n^{-1}(x_0)}(\lambda)}
\ge \frac{\ext_{x_0}(\lambda)}{M}
\end{align*}
which is bounded below independently of $n$. This contradicts the convergence of \eqref{eq:converge_sum}, and hence the intersection $\Lambda_H(H) \cap W$ must be $\PThursM^{x_0}$-null.
\end{proof}

\begin{corollary}
\label{coro:totally_dissipative}
If $H$ is totally dissipative, then $\Lambda_H(H)$ is $\PThursM^{x_0}$-null.
\end{corollary}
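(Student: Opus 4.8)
The plan is to deduce the corollary directly from \Cref{prop:Wandering_Dissipative} by exhausting $\pml_{g,m}$ (up to a null set) by translates of a single wandering set. By total dissipativity, I may fix a wandering set $W$ whose saturation $W^* = \bigcup_{\omega \in H} \omega(W)$ has full measure, i.e.\ $\PThursM^{x_0}(\pml_{g,m} \setminus W^*) = 0$. \Cref{prop:Wandering_Dissipative} applied to $W$ gives $\PThursM^{x_0}(\Lambda_H(H) \cap W) = 0$, and the goal is to propagate this to every translate $\omega(W)$ and then sum over $H$.

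First I would use the $H$-invariance of $\Lambda_H(H)$ (established just before \Cref{prop:Wandering_Dissipative}) to write, for each $\omega \in H$,
\[
\Lambda_H(H) \cap \omega(W) = \omega\big(\Lambda_H(H) \cap W\big).
\]
Since $\PThursM^{x_0}$ is quasi-invariant under the $\mcg(\Sigma_{g,m})$-action, the image of a $\PThursM^{x_0}$-null set under any $\omega \in H$ is again null; hence $\PThursM^{x_0}(\Lambda_H(H) \cap \omega(W)) = 0$ for every $\omega \in H$.

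Next, because $\mcg(\Sigma_{g,m})$---and therefore $H$---is countable, the intersection $\Lambda_H(H) \cap W^* = \bigcup_{\omega \in H}\big(\Lambda_H(H) \cap \omega(W)\big)$ is a countable union of $\PThursM^{x_0}$-null sets, so $\PThursM^{x_0}(\Lambda_H(H) \cap W^*) = 0$. Finally I would use the decomposition
\[
\Lambda_H(H) \subset \big(\Lambda_H(H) \cap W^*\big) \cup \big(\pml_{g,m} \setminus W^*\big),
\]
in which the first set is null by the previous step and the second is null by total dissipativity, yielding $\PThursM^{x_0}(\Lambda_H(H)) = 0$.

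This argument is essentially a bookkeeping consequence of \Cref{prop:Wandering_Dissipative}, so I do not expect a serious obstacle; the only point requiring care is the interplay between the $H$-invariance of $\Lambda_H(H)$ and the quasi-invariance of $\PThursM^{x_0}$ used to pass from $W$ to its translates. One could alternatively verify directly that each $\omega(W)$ is itself a wandering set---via the conjugation identity $\omega(W) \cap \omega'\omega(W) = \omega(W \cap (\omega^{-1}\omega'\omega)(W))$ together with quasi-invariance---and then apply \Cref{prop:Wandering_Dissipative} to each translate separately; both routes reach the same conclusion.
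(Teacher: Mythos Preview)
Your argument is correct and is precisely the natural way to fill in the details the paper leaves implicit; the paper states the corollary without proof, treating it as an immediate consequence of \Cref{prop:Wandering_Dissipative}. Either route you describe---using $H$-invariance of $\Lambda_H(H)$ together with quasi-invariance, or noting that each $\omega(W)$ is again wandering and applying the proposition to each translate---is exactly what is intended.
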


Since $\Delta(H)$ is open in $\pml_{g,m}$ and $H$ acts properly discontinuously on $\Delta(H)$, any point of $\Delta(H)$ is contained in a wandering set.

\begin{proposition}
\label{prop:Horo_PropDisconti}
For any sufficiently large subgroup $H$ of $\mcg(\Sigma_{g,m})$, we have $\PThursM^{x_0}(\Lambda_H(H) \cap \Delta(H)) = 0$.  
Moreover, if $[\lambda] \in \Lambda_H(H)$ is uniquely ergodic, then $[\lambda] \in \Lambda(H)$.
\end{proposition}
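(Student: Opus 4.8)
The statement (\Cref{prop:Horo_PropDisconti}) has two parts, and I would treat them separately. The first, $\PThursM^{x_0}(\Lambda_H(H)\cap\Delta(H))=0$, is designed to be a direct consequence of \Cref{prop:Wandering_Dissipative}. Since $\Delta(H)$ is open and $H$ acts properly discontinuously on it (see \cite{MR982564} and \cite{MR2465691}), the remark just before the statement shows that every point of $\Delta(H)$ lies in a wandering set. Because $\pml_{g,m}\cong\mathbb{S}^{6g-7+2m}$ is second countable, $\Delta(H)$ is Lindel\"of, so it is covered by countably many wandering sets $W_1,W_2,\dots$. By \Cref{prop:Wandering_Dissipative} each $\PThursM^{x_0}(\Lambda_H(H)\cap W_k)=0$, and since $\Lambda_H(H)\cap\Delta(H)=\bigcup_k(\Lambda_H(H)\cap W_k)$, countable subadditivity of $\PThursM^{x_0}$ finishes this part.

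For the second part, fix a uniquely ergodic $[\lambda]\in\Lambda_H(H)$. By definition there is $\{\omega_n\}\subset H$ with $d_T(x_0,\omega_n(x_0))\to\infty$ and $\ext_{\omega_n(x_0)}(\lambda)\le M$ for some $M>0$; the $\omega_n$ are eventually distinct because their images $\omega_n(x_0)$ are. As $d_T$-balls are compact, $\{\omega_n(x_0)\}$ leaves every compact subset of $\teich_{g,m}$, so by compactness of the Thurston compactification (\Cref{thm:thurston_compactification}) I may pass to a subsequence with $\omega_n(x_0)\to[\mu]$ for some $[\mu]\in\partial_{Th}\teich_{g,m}=\pml_{g,m}$. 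The plan is to prove $[\mu]=[\lambda]$; then $\omega_n(x_0)\to[\lambda]$ exhibits $[\lambda]$ as a limit point of the $H$-orbit, whence $[\lambda]\in Z\Lambda(H)$ by the McCarthy--Papadopoulos characterization (\cite[Proposition 8.1]{MR982564}). Finally, $[\lambda]\in Z\Lambda(H)$ means $i(\lambda,\lambda')=0$ for some $[\lambda']\in\Lambda(H)$, and unique ergodicity of $\lambda$ (\S\ref{subsub:uel}) forces $[\lambda]=[\lambda']\in\Lambda(H)$, which is the claim.

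The crux is the implication: if $\omega_n(x_0)\to[\mu]$ in the Thurston compactification and $\sup_n\ext_{\omega_n(x_0)}(\lambda)<\infty$, then $i(\mu,\lambda)=0$ (after which unique ergodicity gives $[\mu]=[\lambda]$). I would prove it using the uniform comparison between extremal and hyperbolic length: taking the hyperbolic metric as a competitor in the definition of extremal length as a supremum over conformal metrics yields $\ext_x(\gamma)\ge \ell_x(\gamma)^2/\big(2\pi(2g-2+m)\big)$ for all $x$ and $\gamma$, which extends to $\lambda\in\ml_{g,m}$ by homogeneity and continuity; hence $\ell_{\omega_n(x_0)}(\lambda)$ stays bounded. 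Writing the Thurston convergence as $\ell_{\omega_n(x_0)}(\cdot)/c_n\to i(\mu,\cdot)$ with $c_n>0$, the decisive observation is that every $\omega_n(x_0)$ projects to the same point of moduli space as $x_0$, so $\ell_{\omega_n(x_0)}(\alpha)=\ell_{x_0}(\omega_n^{-1}(\alpha))\ge s_0$ for all $\alpha\in\mathcal{S}$ and all $n$, where $s_0>0$ is the systole of $M_0$. This uniform lower bound forces $c_n\to\infty$: otherwise a subsequence would satisfy $c_n\to c\in(0,\infty)$, giving $\ell_{\omega_n(x_0)}(\alpha)\to c\,i(\mu,\alpha)$, which together with the systole bound would keep the lengths of a filling system of curves bounded above and below, making $\{\omega_n(x_0)\}$ precompact in $\teich_{g,m}$ and contradicting $\omega_n(x_0)\to[\mu]\in\pml_{g,m}$. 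With $c_n\to\infty$ and $\ell_{\omega_n(x_0)}(\lambda)$ bounded, we obtain $i(\mu,\lambda)=\lim_n\ell_{\omega_n(x_0)}(\lambda)/c_n=0$.

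I expect this last step---pinning down the Thurston limit of the orbit through the extremal-length bound---to be the main obstacle, since it is exactly where the failure of Gromov hyperbolicity of $\teich_{g,m}$ could defeat a naive ``a horoball closes up at its basepoint'' argument. What rescues it is the bounded geometry of a mapping-class orbit (the systole lower bound), which forces the normalizers $c_n$ to diverge, together with unique ergodicity, which is used first to upgrade $i(\mu,\lambda)=0$ to $[\mu]=[\lambda]$ and then a second time to pass from $Z\Lambda(H)$ into $\Lambda(H)$. Both parts of the proposition genuinely require $H$ to be sufficiently large, since the objects $\Delta(H)$, $Z\Lambda(H)$, and $\Lambda(H)$, and the proper-discontinuity and limit-point results of McCarthy--Papadopoulos, are available only in that setting.
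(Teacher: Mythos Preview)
Your proof is correct, and the first part matches the paper's (you just spell out the Lindel\"of covering that the paper leaves implicit). The second part, however, takes a genuinely different route from the paper.

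The paper's argument is: from $\ext_{\omega_n(x_0)}(\lambda)\le M$ and $d_T(x_0,\omega_n(x_0))\to\infty$ one gets $e^{-2d_T(x_0,\omega_n(x_0))}\ext_{\omega_n(x_0)}(\lambda)\to 0$, which by \cite[Theorem~3]{MR3009545} forces $\omega_n(x_0)\to[\lambda]$ in the \emph{Gardiner--Masur} compactification; since $\lambda$ is uniquely ergodic, \cite[Corollary~1]{MR3009545} upgrades this to Thurston convergence, and then McCarthy--Papadopoulos plus unique ergodicity finish as in your argument. So the paper cites prepackaged results about the Gardiner--Masur boundary and its relation to Thurston's boundary at uniquely ergodic points, obtaining convergence of the \emph{full} sequence in one stroke.

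Your approach avoids the Gardiner--Masur compactification entirely: you extract a subsequential Thurston limit $[\mu]$, transfer the extremal-length bound to a hyperbolic-length bound via the Maskit-type inequality $\ext_x(\lambda)\ge \ell_x(\lambda)^2/\bigl(2\pi(2g-2+m)\bigr)$, and then exploit the fact that $\{\omega_n(x_0)\}$ stays on a single $\mcg$-orbit (hence has systole bounded below) to force the normalizing constants $c_n\to\infty$ and conclude $i(\mu,\lambda)=0$. This is more elementary and self-contained, at the cost of a slightly longer argument and an appeal to the continuous extension of $\ell_x(\cdot)/c_n\to i(\mu,\cdot)$ from $\mathcal{S}$ to $\ml_{g,m}$ (which is standard, e.g.\ via Bonahon's geodesic currents, but worth a citation). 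The paper's approach is quicker if one has \cite{MR3009545} in hand; yours is more portable and makes transparent exactly which geometric inputs are used.
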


\begin{proof}
The assertion $\PThursM^{x_0}(\Lambda_H(H) \cap \Delta(H)) = 0$ follows from \Cref{prop:Wandering_Dissipative}.

Let $[\lambda] \in \Lambda_H(H)$, and suppose that $\lambda$ is uniquely ergodic.  
By definition, there exists a sequence $\{\omega_n\}_n \subset H$ such that
$\ext_{\omega_n(x_0)}(\lambda)$ remains bounded above for all $n \in \mathbb{N}$ and $d_T(x_0, \omega_n(x_0)) \to \infty$ as $n \to \infty$.  
Then,
\[
e^{-2 d_T(x_0, \omega_n(x_0))} \ext_{\omega_n(x_0)}(\lambda)
\to 0
\]
as $n \to \infty$.  
This implies that the sequence $\{\omega_n(x_0)\}_n$ converges to $[\lambda] \in \pml_{g,m}$ in the Gardiner-Masur compactification (cf.~\cite[Theorem 3]{MR3009545}), and hence also converges to $[\lambda]$ in the Thurston compactification (cf.~\cite[Corollary 1]{MR3009545}).  
Therefore, $[\lambda]$ is a limit point in the sense of McCarthy and Papadopoulos.  
By \cite[Proposition 8.1]{MR982564}, it follows that $[\lambda] \in Z\Lambda(H)$.  
By the definition of $Z\Lambda(H)$, there exists $[\lambda'] \in \Lambda(H)$ such that $i(\lambda, \lambda') = 0$, which implies $[\lambda] = [\lambda'] \in \Lambda(H)$.
\end{proof}

From Sullivan's dichotomy~\cite[Theorem IV]{MR624833}, it is natural to ask the following (see also \cite[Theorem 1]{MR1469798}):

\begin{problem}
\label{problem:10}
Let $H$ be a sufficiently large subgroup of $\mcg(\Sigma_{g,m})$. Are the following statements equivalent?
\begin{enumerate}
\item
The fundamental domain has zero measure in $\pml_{g,m}$;
\item
The action of $H$ on $\pml_{g,m}$ is conservative;
\item
The big horospherical limit set $\Lambda_H(H)$ has full measure in $\pml_{g,m}$.
\end{enumerate}
\end{problem}

\subsection{Convex cocompact groups}
\label{subsec:convex_cocompact}
Farb--Mosher~\cite{MR1914566} and Kent--Leininger~\cite{MR2465691} studied convex cocompact subgroups of $\mcg(\Sigma_{g,m})$ from a geometric viewpoint.

Following Kent and Leininger, a point $[\lambda]\in \Lambda(H)$ is called a \emph{conical limit point}\index{conical limit}\index{subgroups of mapping class group!conical limit point} if for any Teichm\"uller geodesic ray with direction $[\lambda]$, there exists a number $R > 0$ such that some $H$-orbit intersects the $R$-neighborhood of the geodesic ray in an infinite set. Let $\Lambda_C(H)$ be the set of conical limit points of $H$. By definition, $\Lambda_0(H) \subset \Lambda_C(H)$. As noted by Kent and Leininger, by Masur's criterion (\Cref{thm:divergence_non-ue}), any conical limit point is uniquely ergodic. Hence, from McCarthy and Papadopoulos's criterion~\cite[Proposition 8.1]{MR982564}, we have $\Lambda_C(H) \subset \Lambda(H)$.

Therefore, when $H$ is sufficiently large, we obtain
\[
\overline{\Lambda_0(H)} = \overline{\Lambda_C(H)} = \Lambda(H),
\]
since $\Lambda(H)$ is the unique $H$-invariant minimal closed subset of $\pml_{g,m}$ (cf.~\cite[Theorem 4.1]{MR982564}).

\begin{proposition}
\label{prop:conical_horospherical}
For a subgroup $H$ of $\mcg(\Sigma_{g,m})$, we have $\Lambda_C(H) \subset \Lambda_h(H)$.
\end{proposition}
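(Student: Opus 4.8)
The plan is to convert the conicality of $[\lambda]$ into a sequence $\{h_n\}\subset H$ along which the extremal length of $\lambda$ collapses, which is exactly the defining property \eqref{eq:horo_definition2} of a small horospherical limit point. Since a conical limit point is uniquely ergodic (by Masur's criterion, \Cref{thm:divergence_non-ue}), the direction $[\lambda]$ determines a single asymptotic class of Teichm\"uller rays, and the definition of conicality is quantified over \emph{any} ray in direction $[\lambda]$; I may therefore work throughout with the ray $r=\TRay{x_0;[\lambda]}=\TRay{q_{\lambda,x_0}}$ emanating from the base point $x_0$, for which $d_T(x_0,r(t))=t$.

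First I would record how $\ext_{\,\cdot}(\lambda)$ behaves along $r$. The point $r(t)$ lies on the Teichm\"uller ray defined by $q_{\lambda,x_0}$, so the sharp case of the quasiconformal invariance \eqref{eq:K-qc-extremal_length} forces $\ext_{r(t)}(\lambda)=e^{\pm 2t}\ext_{x_0}(\lambda)$. The correct branch is selected by Masur's theorem (\Cref{thm:masur_geodesic}): since $r(t)\to[\lambda]$ in the Thurston compactification, and $[\lambda]$ is precisely the direction in which the extremal length of $\lambda$ degenerates, one obtains
\[
\ext_{r(t)}(\lambda)=e^{-2t}\,\ext_{x_0}(\lambda)\longrightarrow 0\qquad(t\to\infty).
\]

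Next I would extract the approximating sequence. By definition of a conical limit point there exist $R>0$ and an orbit meeting the $R$-neighbourhood of $r$ in an infinite set; replacing $R$ by $R+d_T(x_0,y)$ we may take this to be the orbit of $x_0$. Thus there are infinitely many distinct points $h_n(x_0)$ with $h_n\in H$ and parameters $t_n\ge 0$ such that $d_T\!\big(h_n(x_0),r(t_n)\big)\le R$. Because $(\teich_{g,m},d_T)$ is a proper metric space on which $\mcg(\Sigma_{g,m})$ acts properly discontinuously, the closed $R$-neighbourhood of any compact subarc $r([0,T])$ is compact and hence contains only finitely many orbit points; consequently the parameters $t_n$ cannot remain bounded, and after passing to a subsequence $t_n\to\infty$.

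Finally I would combine the two ingredients: applying \eqref{eq:K-qc-extremal_length} with $x_1=r(t_n)$ and $x_2=h_n(x_0)$ gives
\[
\frac{\ext_{h_n(x_0)}(\lambda)}{\ext_{x_0}(\lambda)}
\le e^{2R}\,\frac{\ext_{r(t_n)}(\lambda)}{\ext_{x_0}(\lambda)}
= e^{2R}e^{-2t_n}\longrightarrow 0,
\]
so $\{h_n\}$ witnesses $[\lambda]\in\Lambda_h(H)$. I expect the main obstacle to be the second step: correctly orienting the sharp case of \eqref{eq:K-qc-extremal_length} against Masur's convergence statement so as to obtain genuine decay (rather than growth) of $\ext_{r(t)}(\lambda)$, and invoking properness of the Teichm\"uller metric to force $t_n\to\infty$ from the fact that a bounded neighbourhood of a compact arc meets the discrete orbit in only finitely many points. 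Once these are settled, the remaining estimates are routine consequences of the quasiconformal invariance of extremal length.
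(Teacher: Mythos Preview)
Your proof is correct and follows essentially the same route as the paper's: work with the Teichm\"uller ray $r=\TRay{x_0;[\lambda]}$ from the base point, extract an infinite orbit sequence $\{h_n(x_0)\}$ within distance $R$ of $r(t_n)$, use proper discontinuity of the action on $\teich_{g,m}$ to force $t_n\to\infty$, and conclude via the estimate $\ext_{h_n(x_0)}(\lambda)/\ext_{x_0}(\lambda)\le e^{2R}e^{-2t_n}\to 0$. Your write-up is in fact slightly more careful than the paper's in two places: you explicitly reduce ``some $H$-orbit'' to the orbit of $x_0$ by enlarging $R$, and you spell out why the decay branch $e^{-2t}$ (rather than $e^{+2t}$) is the correct one along $r$.
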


\begin{proof}
Fix $x_0 \in \teich_{g,m}$.  
Let $[\lambda] \in \pml_{g,m}$ be a conical limit point of $H$.  
Then $\lambda$ is uniquely ergodic, and there exists an infinite sequence $\{\omega_n\}_n$ in $H$ and a constant $R > 0$ such that $\omega_n(x_0)$ lies within the $R$-neighborhood of the Teichm\"uller geodesic ray emanating from $x_0$ in the direction $[\lambda]$.  
For each $n \in \mathbb{N}$, choose $t_n > 0$ such that
\[
d_T(\omega_n(x_0), \TRay{x_0;[\lambda]}(t_n)) \le R.
\]
Since $H$ acts properly discontinuously on $\teich_{g,m}$, it follows that $t_n \to \infty$ as $n \to \infty$.  
Therefore, we have
\[
\frac{\ext_{\omega_n(x_0)}(\lambda)}{\ext_{x_0}(\lambda)}
\le e^{2R} \cdot \frac{\ext_{\TRay{x_0;[\lambda]}(t_n)}(\lambda)}{\ext_{x_0}(\lambda)}
= e^{2R} e^{-2t_n} \to 0
\]
as $n \to \infty$.
This implies that $[\lambda] \in \Lambda_h(H)$.
\end{proof}

Let $H$ be a subgroup of $\mcg(\Sigma_{g,m})$.
Following Farb and Mosher~\cite{MR1914566}, we call a subgroup $H$ of $\mcg(\Sigma_{g,m})$ \emph{convex cocompact}\index{convex cocompact}\index{subgroups of mapping class group!convex cocompact} if some orbit of $H$ in $\teich_{g,m}$ is quasi-convex.  
Kent and Leininger showed that $\Lambda(H) = \Lambda_C(H)$ for a convex cocompact group $H$ (cf.~\cite[Theorem 1.2]{MR2465691}). This means that every $[\lambda] \in \Lambda(H)$ is uniquely ergodic and $Z\Lambda(H) = \Lambda(H) = \Lambda_C(H)$ for a convex cocompact group $H$.

\begin{proposition}
\label{prop:convex_cocompact}
For a convex cocompact and sufficiently large subgroup $H$ of $\mcg(\Sigma_{g,m})$, we have
\[
Z\Lambda(H) = \Lambda(H) = \Lambda_C(H) = \Lambda_h(H) = \Lambda_H(H).
\]
\end{proposition}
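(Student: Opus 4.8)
The plan is to close a short cycle of inclusions, most of which are already available in the excerpt, so that all five sets collapse onto $\Lambda(H)$. For convex cocompact $H$ the work of Kent and Leininger \cite{MR2465691} gives $Z\Lambda(H)=\Lambda(H)=\Lambda_C(H)$ and, crucially, that every point of $\Lambda(H)$ is uniquely ergodic. Combining this with \Cref{prop:conical_horospherical} (which gives $\Lambda_C(H)\subset\Lambda_h(H)$) and the trivial inclusion $\Lambda_h(H)\subset\Lambda_H(H)$, I obtain
\[
Z\Lambda(H)=\Lambda(H)=\Lambda_C(H)\subset \Lambda_h(H)\subset \Lambda_H(H),
\]
so the entire statement reduces to the single reverse inclusion $\Lambda_H(H)\subset\Lambda(H)$.

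To establish it, I would fix $[\lambda]\in\Lambda_H(H)$ with a witnessing sequence $\{\omega_n\}\subset H$ satisfying $\ext_{\omega_n(x_0)}(\lambda)\le M$ for some $M>0$ and $d_T(x_0,\omega_n(x_0))\to\infty$. The first step is to locate the orbit's accumulation. Since $d_T(x_0,\omega_n(x_0))\to\infty$, the points $\omega_n(x_0)$ are eventually distinct and leave every compact subset of $\teich_{g,m}$, so after passing to a subsequence they converge in the Thurston compactification to some $[\nu]\in\pml_{g,m}$. Because $H$ acts properly discontinuously on $\teich_{g,m}\cup\Delta(H)$ by \cite[Theorem 2.1]{MR2465691}, and this set is open in $\overline{\teich_{g,m}}^{Th}$, the orbit $H(x_0)$ can accumulate only in the complement $Z\Lambda(H)=\Lambda(H)$ (a standard compact-neighbourhood argument rules out accumulation inside $\teich_{g,m}\cup\Delta(H)$). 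Hence $[\nu]\in\Lambda(H)$, and in particular $\nu$ is uniquely ergodic.

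It remains to identify $[\lambda]$ with $[\nu]$, and here I would exploit the behaviour of extremal length along a uniquely ergodic direction. Since $\omega_n(x_0)\to[\nu]$ with $\nu$ uniquely ergodic, the sequence also converges to $[\nu]$ in the Gardiner--Masur compactification (cf.\ \cite[Theorem 3, Corollary 1]{MR3009545}), and the associated boundary function is a positive multiple of $i(\nu,\cdot)$. Consequently $e^{-2d_T(x_0,\omega_n(x_0))}\ext_{\omega_n(x_0)}(\lambda)$ tends to a positive multiple of $i(\nu,\lambda)^2$. On the other hand the witnessing bound $\ext_{\omega_n(x_0)}(\lambda)\le M$ together with $d_T(x_0,\omega_n(x_0))\to\infty$ forces this same quantity to tend to $0$. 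Therefore $i(\nu,\lambda)=0$, and the unique ergodicity of $\nu$ yields $[\lambda]=[\nu]\in\Lambda(H)$, which closes the inclusion and hence all the equalities.

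The main obstacle is the last paragraph: the passage from bounded extremal length and divergence of the Teichm\"uller distance to the vanishing $i(\nu,\lambda)=0$. This is exactly where the argument rests on the precise description of the Gardiner--Masur boundary point determined by a uniquely ergodic lamination and on the compatibility of the Gardiner--Masur and Thurston compactifications in that regime. Everything else is bookkeeping of inclusions, and the convex cocompactness hypothesis enters only through the two Kent--Leininger facts---unique ergodicity of limit points and proper discontinuity on $\teich_{g,m}\cup\Delta(H)$---which drive both the accumulation step and the final identification. As an alternative to invoking the sharp boundary asymptotics, one could instead attempt to prove directly that every $[\lambda]\in\Lambda_H(H)$ is uniquely ergodic and then apply \Cref{prop:Horo_PropDisconti}, but this seems to require the same analysis.
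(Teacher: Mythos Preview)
Your proposal is correct and follows essentially the same argument as the paper: reduce to $\Lambda_H(H)\subset\Lambda(H)$, take a Thurston-compactification limit $[\nu]$ of $\omega_n(x_0)$, use convex cocompactness to place $[\nu]$ in $\Lambda(H)$ (hence uniquely ergodic), and then invoke the Gardiner--Masur asymptotics from \cite{MR3009545} (the paper cites Corollaries~1 and~2 there) to get $i(\nu,\lambda)=0$ from the bound $\ext_{\omega_n(x_0)}(\lambda)\le M$. The only cosmetic difference is that the paper cites \cite[Proposition~8.1]{MR982564} rather than proper discontinuity on $\teich_{g,m}\cup\Delta(H)$ to locate $[\nu]$ in $Z\Lambda(H)$.
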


\begin{proof}
This proposition follows from an argument similar to that of \Cref{prop:Horo_PropDisconti}.  
For completeness, we provide the proof.

Let $[\lambda] \in \Lambda_H(H)$. By definition, there exist $M > 0$ and a sequence $\{\omega_n\}_n \subset H$ such that $\ext_{\omega_n(x_0)}(\lambda) \le M$ for all $n \in \mathbb{N}$ and $d_T(x_0, \omega_n(x_0)) \to \infty$ as $n \to \infty$.  
By passing to a subsequence, we may assume that $\{\omega_n(x_0)\}_n$ converges to some $[\lambda'] \in \pml_{g,m}$ in the Thurston compactification.  
Since $H$ is convex cocompact, it follows from~\cite[Proposition 8.1]{MR982564} that $[\lambda'] \in Z\Lambda(H) = \Lambda(H) = \Lambda_C(H)$. Hence, $\lambda'$ is uniquely ergodic.

From~\cite[Corollary 1]{MR3009545}, the sequence $\{\omega_n(x_0)\}_n$ also converges to $[\lambda']$ in the Gardiner--Masur compactification.  
Then, by~\cite[Corollary 2]{MR3009545}, we have
\begin{align*}
\frac{i(\lambda,\lambda')^2}{\ext_{x_0}(\lambda')}
&= \lim_{n \to \infty} e^{-2d_T(\omega_n(x_0), x_0)} \ext_{\omega_n(x_0)}(\lambda)
\\
&\le \lim_{n \to \infty} M e^{-2d_T(\omega_n(x_0), x_0)} = 0.
\end{align*}
Therefore, $[\lambda] = [\lambda'] \in \Lambda(H)$.
\end{proof}

In the case of Kleinian groups $G$, the condition $\Lambda(G) = \Lambda_h(G)$ implies that $G$ is convex cocompact (cf.~\cite[Proposition 8]{MR2038133}).

\begin{problem}
\label{problem:7}
Is a subgroup $H$ of $\mcg(\Sigma_{g,m})$ convex cocompact if $\Lambda(H) = \Lambda_h(H)$?
\end{problem}

From the proof of \Cref{prop:Horo_PropDisconti}, any uniquely ergodic point $[\lambda] \in \Lambda_H(H)$ lies in $\Lambda(H)$. Hence, the following can be regarded as a weak version of \Cref{problem:7}:

\begin{problem}
\label{problem:12}
Is a subgroup $H$ of $\mcg(\Sigma_{g,m})$ convex cocompact if every point in $\Lambda_H(H)$ is uniquely ergodic?
\end{problem}

\begin{problem}
\label{problem:15}
Is a convex cocompact group totally dissipative?
\end{problem}

If \Cref{problem:15} is resolved affirmatively, then it follows from \Cref{coro:totally_dissipative} and \Cref{prop:convex_cocompact} that $\Lambda(H)$ has measure zero for a convex cocompact group~$H$.

\subsection{Future challenge: Functions, Actions and Flows}
\label{subsec:erg_flow}

Kaimanovich showed the following (cf.~\cite[Theorem 6.4]{MR1738739}).

\begin{theorem}[Kaimanovich]
\label{thm:kaimanovich}
For a Fuchsian group $G$ acting on $\mathbb{H}$, the following properties are equivalent:
\begin{enumerate}
\item
The horocyclic flow on the quotient space $\mathbb{H}/G$ is ergodic with respect to the Liouville invariant measure;
\item
The linear action of $G$ on $\mathbb{R}^2$ is ergodic;
\item
The action of $G$ on the boundary $\partial \mathbb{H}$ is ergodic with respect to the smooth measure class;
\item
There are no non-constant bounded harmonic functions on $\mathbb{H}/G$.
\end{enumerate}
\end{theorem}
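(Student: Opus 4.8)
The plan is to prove the cyclic chain of implications $(4)\Leftrightarrow(3)\Leftrightarrow(2)\Leftrightarrow(1)$, exploiting that each neighbouring pair of conditions differs only by a controlled ``transverse'' direction. The equivalence $(3)\Leftrightarrow(4)$ is Seidel's theorem (\Cref{thm:seidel}): bounded harmonic functions on $\mathbb{H}/G$ are in bijection, via the Poisson representation \eqref{eq:PIF}, with $G$-invariant $L^\infty$ functions on $\partial\mathbb{H}$, so the absence of non-constant ones is exactly ergodicity of the boundary action; this part I would simply cite. Throughout I identify $\partial\mathbb{H}$ with $P^1(\mathbb{R})=(\mathbb{R}^2\setminus\{0\})/\mathbb{R}^\times$, write $\pi\colon\mathbb{R}^2\setminus\{0\}\to\partial\mathbb{H}$ for the radial projection, and use $T^1\mathbb{H}\cong\psl_2(\mathbb{R})$, under which the horocyclic flow is right translation by the unipotent subgroup $N=\{(\begin{smallmatrix}1&t\\0&1\end{smallmatrix})\}$ and the geodesic flow is right translation by the diagonal subgroup $A$.

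For $(1)\Leftrightarrow(2)$ I would use the horocycle-space duality. Since $N$ is the stabiliser of the vector $(1,0)^{\!\top}$ and $\slm_2(\mathbb{R})$ acts transitively on $\mathbb{R}^2\setminus\{0\}$, there is an equivariant identification $\psl_2(\mathbb{R})/N\cong(\mathbb{R}^2\setminus\{0\})/\{\pm 1\}$ of the space of horocycles with the linear action modulo $\pm 1$, under which $A$ becomes the radial scaling $v\mapsto e^{t/2}v$. The horocyclic flow on $G\backslash\psl_2(\mathbb{R})$ is ergodic iff the two-sided $G\times N$ action on $\psl_2(\mathbb{R})$ is ergodic iff the left $G$-action on $\psl_2(\mathbb{R})/N$ is ergodic; this is a soft Fubini argument, since functions on $\psl_2(\mathbb{R})$ that are left-$G$- and right-$N$-invariant are the same whether one quotients by $G$ first (horocyclic-flow invariants) or by $N$ first (invariants of the $\mathbb{R}^2$-action). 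This yields $(1)\Leftrightarrow(2)$ with Lebesgue measure on $\mathbb{R}^2$, which is $\slm_2(\mathbb{R})$-invariant and hence an honest (infinite) invariant measure.

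The remaining link $(2)\Leftrightarrow(3)$ is where the transverse direction must be analysed. The implication $(2)\Rightarrow(3)$ is immediate: a $G$-invariant set $A\subset\partial\mathbb{H}$ pulls back under $\pi$ to a $G$-invariant (and scaling-invariant) subset of $\mathbb{R}^2\setminus\{0\}$, which is null or co-null by $(2)$, hence so is $A$. For the converse I would decompose $\mathbb{R}^2\setminus\{0\}\cong\partial\mathbb{H}\times\mathbb{R}$ by $v\mapsto(\pi(v),\log|v|)$; the linear action then becomes the skew product $g\cdot(\xi,s)=(g\xi,\,s+\beta(g,\xi))$ with radial cocycle $\beta(g,\xi)=\log|g\xi|$. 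Up to a constant factor this $\beta$ is the logarithmic Radon–Nikodym cocycle of the boundary action for the Lebesgue class, that is, the logarithm of the Poisson kernel (equivalently the Busemann cocycle), matching the dictionary in Table~\ref{table:a_dictionary}. Thus $(\mathbb{R}^2\setminus\{0\},\mathrm{Leb})$ is precisely the Maharam extension of the boundary action.

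The main obstacle is the ergodicity of this Maharam extension over an ergodic base: an $\mathbb{R}$-extension by a Radon–Nikodym cocycle is ergodic iff the base action is ergodic and the cocycle is aperiodic (the action is of type $\mathrm{III}_1$), equivalently the essential values of $\beta$ are not confined to a proper closed subgroup of $\mathbb{R}$. For a Fuchsian boundary action this aperiodicity is the genuinely dynamical input; I would establish it by a Hopf-type argument, using mixing of the geodesic flow — equivalently the non-arithmeticity and density of the translation-length spectrum of $G$ — to show that the essential values of $\beta$ fill all of $\mathbb{R}$. This is exactly the point where Sullivan's ergodic theory enters (cf.\ \cite{MR624833} and the Hopf–Tsuji–Sullivan circle of ideas), and it is the crux of the whole chain; by comparison the other three equivalences are formal. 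Assembling $(4)\Leftrightarrow(3)$, $(3)\Leftrightarrow(2)$ and $(2)\Leftrightarrow(1)$ then gives the theorem.
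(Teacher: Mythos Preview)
The paper does not prove \Cref{thm:kaimanovich}; it is quoted with attribution to Kaimanovich~\cite[Theorem 6.4]{MR1738739} and used only as motivation for \Cref{problem:9}. So there is no proof in the paper to compare against.

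Your outline is structurally sound, but the step $(3)\Rightarrow(2)$ has a genuine gap. You propose to obtain type~$\mathrm{III}_1$ for the boundary action by invoking mixing of the geodesic flow on $\mathbb{H}/G$. The theorem, however, must apply to \emph{every} Fuchsian group satisfying~(3), and the paper itself (in the paragraph preceding \Cref{problem:13}) recalls that there exist Fuchsian groups $G$ for which $\mathbb{H}/G$ admits no non-constant bounded harmonic function (so (3) and (4) hold) yet does admit a Green function. By Hopf--Tsuji--Sullivan, the geodesic flow on such a quotient is not ergodic with respect to the Liouville measure, let alone mixing. Thus the tool you reach for is simply unavailable in the regime where the implication has content.

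The parenthetical you append --- non-arithmeticity and density of the translation-length spectrum --- is the right input, but it is \emph{not} equivalent to mixing of the geodesic flow. It is a purely group-theoretic fact valid for every non-elementary Fuchsian group (two independent hyperbolic elements plus trace identities), independent of any ergodic property of the geodesic flow. What you actually need is: the essential range of the Busemann/Radon--Nikodym cocycle contains all translation lengths of hyperbolic elements of $G$ (this uses only ergodicity of the boundary action and the local behaviour of the cocycle near a hyperbolic fixed point), and those lengths generate a dense subgroup of $\mathbb{R}$. That yields type~$\mathrm{III}_1$ and hence ergodicity of the Maharam extension. Rewrite the step without the appeal to geodesic mixing and your chain closes.
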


The horocyclic flow on the moduli space was introduced by Masur~\cite{MR632453}.  
Masur also showed that the horocyclic flow is ergodic~\cite{MR787893}.  
Motivated by Kaimanovich's observation, it is natural to ask the following:

\begin{problem}
\label{problem:9}
Let $H$ be a sufficiently large subgroup of $\mcg(\Sigma_{g,m})$. Are the following properties equivalent?
\begin{enumerate}
\item
The horocyclic flow on the quotient space $\teich_{g,m}/H$ is ergodic with respect to the Masur-Veech invariant measure;
\item
The action of $H$ on $\ml_{g,m}$ is ergodic with respect to the Thurston measure;
\item
The action of $H$ on $\pml_{g,m}$ is ergodic with respect to the Thurston measure;
\item
$\teich_{g,m}/H$ admits no non-constant bounded pluriharmonic functions.
\end{enumerate}
\end{problem}

\Cref{thm:main4} shows that condition (3) implies condition (4).  
Condition (2) implies condition (3) because any non-null and non-full invariant set $A \subset \pml_{g,m}$ can be extended to a non-null and non-full invariant subset of $\ml_{g,m}$ by taking the rays in directions given by $A$.

\begin{remark}
Although not explicitly stated in \Cref{remark:2}, the condition of "pluriharmonicity" in (4) of \Cref{problem:9} is substantially stronger than mere "harmonicity", as it is defined by a system of linear partial differential equations. It is conceivable that this condition could be replaced by a more suitable one.
\end{remark}

Although not directly related, the Hopf--Tsuji--Sullivan theory \cite{MR624833} motivates the following question (cf., e.g., \cite{MR1738739}, \cite{MR1041575}, and \cite{MR1638795}):

\begin{problem}
\label{problem:11}
Let $H$ be a sufficiently large subgroup of $\mcg(\Sigma_{g,m})$. Are the following properties equivalent?
\begin{enumerate}
\item
The Teichm\"uller geodesic flow on $\teich_{g,m}/H$ is ergodic;
\item
The diagonal action of $H$ on $\pml_{g,m} \times \pml_{g,m}$ is ergodic;
\item
The quotient $\teich_{g,m}/H$ admits no pluricomplex Green function.
\end{enumerate}
\end{problem}

In the case of Fuchsian groups, there exists a group $G$ such that $\mathbb{H}/G$ admits a Green function but no non-constant bounded harmonic functions (cf.~\cite[Chapter IV, \S25]{MR0114911}). This means that $G$ acts ergodically on $\partial \mathbb{H}$ but not on the product $\partial \mathbb{H} \times \partial \mathbb{H}$. Motivated by this phenomenon, it is natural to ask the following:

\begin{problem}
\label{problem:13}
Does there exist a subgroup $H$ of $\mcg(\Sigma_{g,m})$ that acts ergodically on $\pml_{g,m}$ but not on the product $\pml_{g,m} \times \pml_{g,m}$?
\end{problem}

\subsection{Future challenge: Monodromies of holomorphic families}
\label{subsec:Monodromy}

A \emph{holomorphic family of Riemann surfaces}\index{holomorphic family of Riemann surfaces} of type $(g,m)$ over a Riemann surface $B$ is a triple $(\mathcal{M}, \pi, B)$ such that:
\begin{enumerate}
\item there are a two dimensional complex manifold $\hat{\mathcal{M}}$, a non-singular one dimensional analytic subset of $\hat{\mathcal{M}}$ (possibly $C=\emptyset$) and a holomorphic map $\hat{\pi}\colon \hat{\mathcal{M}}\to B$ such that $\hat{\pi}$ is proper and of maximal rank at every point of $\hat{\mathcal{M}}$;
\item
$\mathcal{M}=\hat{\mathcal{M}}\setminus C$, and $\pi=\hat{\pi}|_{\mathcal{M}}$, and the fiber $\pi^{-1}(b)$ over each point $b \in B$ is a Riemann surface of type $(g,m)$.  
\end{enumerate}

Assume that $B$ is a hyperbolic Riemann surface and that it is represented as $B = \mathbb{D}/\Gamma$, where $\Gamma$ is the Fuchsian group associated with $B$. Let ${\rm pr} \colon \mathbb{D} \to B$ denote the projection.  
It is known that there exists a holomorphic map $\Phi \colon \mathbb{D} \to \teich_g$, called the \emph{representation}\index{representation}\index{holomorphic family of closed Riemann surfaces!representation} of the holomorphic family $(\mathcal{M}, \pi, B)$, and a homomorphism $\rho \colon \Gamma \to {\rm Mod}_{g,m}$, called the \emph{monodromy homomorphism}\index{monodromy homomorphism}\index{holomorphic family of Riemann surfaces!monodromy homomorphism}, such that $\rho(\gamma) \circ \Phi = \Phi \circ \gamma$ on $\mathbb{D}$ for all $\gamma \in \Gamma$, and for each $z \in \mathbb{D}$, the underlying surface of $\Phi(z) \in \teich_g$ is biholomorphic to the fiber $\pi^{-1}({\rm pr}(z))$, where ${\rm Mod}_{g,m}$ denotes the Teichm\"uller modular group (cf. \S\ref{subsec:Action_MCG}).  

Let $H = \rho(\Gamma)$.  
When $B$ belongs to the class $\mathcal{O}_{HB}$—meaning that $B$ admits no non-constant bounded harmonic functions—some two points in the quotient space $\teich_{g,m}/H$ cannot be distinguished by any bounded pluriharmonic function.  
Indeed, let $u$ be a bounded pluriharmonic function on $\teich_{g,m}/\rho(\Gamma)$. Let $\tilde{u} \colon \teich_{g,m} \to \mathbb{R}$ be the lift of $u$. Since $\tilde{u}$ is $\rho(\Gamma)$-invariant, the composition $\tilde{u} \circ \Phi$ is a bounded $\Gamma$-invariant harmonic function on $\mathbb{D}$, which descends to a bounded harmonic function on $B$. 
Hence, $\tilde{u}$ is constant on the image of $\Phi$. 

By a similar argument, when $B$ is of class $\mathcal{O}_{AB}$—that is, $B$ admits no non-constant bounded holomorphic functions—then some two points in $\teich_{g,m}/H$ cannot be distinguished by bounded holomorphic functions.

Note that any two points in $\teich_{g,m}$ can be separated by bounded holomorphic functions, since $\teich_{g,m}$ is biholomorphic to a bounded domain.  
Thus, the function-theoretic structure of the base Riemann surface $B$ influences both the structure of the subgroup $H$ and that of the quotient space $\teich_{g,m}/H$.  
This separability by bounded holomorphic functions arises naturally and plays a central role in the study of the Banach algebra of bounded holomorphic functions; see \cite{MR268655}, for instance.


\begin{problem}
\label{problem:4-1}
Does $\teich_{g,m}/H$ admit no non-constant bounded pluriharmonic functions when the base Riemann surface $B$ belongs to the class $\mathcal{O}_{HB}$?
\end{problem}

\begin{problem}
\label{problem:4-2}
Does $\teich_{g,m}/H$ admit no non-constant bounded holomorphic functions when the base Riemann surface $B$ belongs to the class $\mathcal{O}_{AB}$?
\end{problem}

In light of \Cref{thm:main4}, it is natural to ask the following:

\begin{problem}
\label{problem:4}
Is the action of the monodromy group $\rho(\Gamma)$ ergodic when the base Riemann surface $B$ belongs to the class $\mathcal{O}_{HB}$?
\end{problem}

A weaker version of this question is the following:

\begin{problem}
\label{problem:5}
Is the action of $\rho(\Gamma)$ on $\pml_{g,m}$ minimal when the base Riemann surface $B$ belongs to the class $\mathcal{O}_{HB}$?
\end{problem}

In fact, the above four problems—\Cref{problem:4-1}, \Cref{problem:4-2}, \Cref{problem:4}, and \Cref{problem:5}—stem from the seminal work of Shiga~\cite{MR1466656}. Specifically, Shiga showed that when $B$ is of class $\mathcal{O}_G$, the image $\rho(\Gamma)$ is infinite and irreducible. Hence, these problems remain meaningful even when $B$ belongs to the class $\mathcal{O}_G$.
Observe from \Cref{thm:main4} that an affirmative answer to \Cref{problem:4} would also yield affirmative answers to the other three problems.

Affirmative answers to \Cref{problem:4-1} and \Cref{problem:4-2} can be summarized, in slogan form, as follows:  
if $B$ is of class $\mathcal{O}_{HB}$ or $\mathcal{O}_{AB}$, then so is $\teich_{g,m}/H$.  

\begin{problem}
\label{problem:6}
Which properties of $B$ as a Riemann surface are inherited by $\teich_{g,m}/H$?  
For instance, if $B$ is of class $\mathcal{O}_G$, does $\teich_{g,m}/H$ also admit no pluricomplex Green function?
\end{problem}


\end{document}